\newcommand{\N}{\mathbb{N}}
\newcommand{\R}{\mathbb{R}}
\newcommand{\C}{\mathbb{C}}
\newcommand{\PR}{\mathbb{P}}
\newcommand{\E}{\mathbb E}
\def \lint{[\![}
\def \rint{]\!]}
\newcommand{\parent}{\overline}
\newcommand{\Cc}{\mathcal{C}}
\newcommand{\Dc}{\mathcal{D}}
\newcommand{\Tc}{\mathcal{T}}
\newcommand{\Id}{{\rm{Id}} }
\newcommand{\ran}{\mathrm{Ran}}  
\newcommand{\supp}{\mathrm{supp}}  
\newcommand{\Ar}{\mathscr{A}}
\newcommand{\Dr}{\mathscr{D}}
\newcommand{\Er}{\mathscr{E}}
\newcommand{\Fr}{\mathscr{F}}
\newcommand{\Gr}{\mathscr{G}}
\newcommand{\Hr}{\mathscr{H}}
\newcommand{\Kr}{\mathscr{K}}
\newcommand{\Nr}{\mathscr{N}}
\newcommand{\Tr}{\mathscr{T}}
\newcommand{\Qr}{\mathscr{Q}}
\newcommand{\Ur}{\mathscr{U}}
\newcommand{\Vr}{\mathscr{V}}
\newcommand{\Wr}{\mathscr{W}}
\newcommand{\ve}{\varepsilon}
\renewcommand{\deg}{{\rm deg}}
\def\build#1_#2^#3{\mathrel{\mathop{\kern 0pt#1}\limits_{#2}^{#3}}}
\newtheorem{theorem}{Theorem}[section]%{\bf}{\it }
\newtheorem{proposition}[theorem]{Proposition}%{\bf}{\it }
\newtheorem{lemma}[theorem]{Lemma}%{\bf}{\it }
\newtheorem{remark}[theorem]{Remark}%{\bf}{\it }
\newtheorem{example}[theorem]{Example}%{\bf}{\it }
\newtheorem{definition}[theorem]{Definition}%{\bf}{\it }
\newtheorem{corollary}[theorem]{Corollary}%{\bf}{\it }
\numberwithin{equation}{section}
\newcommand{\beq}{\begin{equation}}
\newcommand{\eeq}{\end{equation}}
\newcommand{\beas}{\begin{eqnarray*}}
\newcommand{\eeas}{\end{eqnarray*}}
\def \bone{\mathbf{1}}
\def \bd{{\rm{bd}}}
\title[Essential spectrum and Weyl asymptotics]{Essential spectrum and
  Weyl asymptotics for discrete Laplacians} 
\begin{document}
%%%%%%%%%%%%%%%%%%%%%%%AUTHOR%%%%%%%%%%%%%%%%%%%%%%%%%%%%%%%%%%%%%%%%%%%
\author{Michel Bonnefont}
\address{Institut de Math\'ematiques de Bordeaux, Universit\'e
Bordeaux, $351$, cours de la Lib\'eration
\\$33405$ Talence cedex, France}
\email{michel.bonnefont@math.u-bordeaux.fr}
\author{Sylvain Gol\'enia}
\address{Institut de Math\'ematiques de Bordeaux, Universit\'e
Bordeaux, $351$, cours de la Lib\'eration
\\$33405$ Talence cedex, France}
\email{sylvain.golenia@math.u-bordeaux.fr}
\subjclass[2010]{34L20,05C63, 47B25, 47A63, 47A10}
\keywords{discrete Laplacian, locally finite graphs, 
asympotic of eigenvalues, spectrum, essential spectrum, markov chains,
functional inequalities}
\date{Version of \today}
%Abstract%%%%%%%%%%%%%%%%%%%%%%%%%%%%%%%%%%%%%%%%%%%%%%%%%%%%%%%%%%%%%%%
\begin{abstract}
In this paper, we investigate spectral properties of discrete Laplacians.
Our study is based on the Hardy inequality and the use of
super-harmonic functions.  
We recover and improve  lower bounds for the bottom of the
spectrum and of the essential spectrum. In some situation, we  obtain Weyl
asymptotics for the eigenvalues. 
We also  provide a probabilistic representation of super-harmonic functions. 
Using coupling arguments, we set comparison results for  the bottom of
the spectrum,   the bottom of the essential spectrum and the
stochastic completeness  of different discrete Laplacians.  The class
of weakly spherically symmetric graphs is also studied in full
detail. 
\end{abstract}
%%%%%%%%%%%%%%%%%%%%%%%%%%%%%%%%%%%%%%%%%%%%%%%%%%%%%%%%%%%%%%%%%%%%%%%%%%%%
\maketitle

\tableofcontents

\section{Introduction}

The study of discrete Laplacians on infinite graphs is at the
crossroad of spectral theory  and geometry. 
A special role is played by the bottom of the spectrum and that of the
essential spectrum of discrete Laplacians. Concerning the former, a
famous link is given through 
Cheeger/isoperimetrical inequalities, e.g.,
\cite{BHJ, BKW, D1, D2, DK,  Fu,M1,M2, KL2,Woj}. 
For the latter, since the essential spectrum can be thought as the
spectrum of the Laplacian ``at infinity'', the link is given through
isoperimetrical inequalities at infinity, e.g., \cite{Fu, KL2}.  In this article
we tackle the question with another standpoint and 
establish a new link with the help of Hardy inequalities, see Section
\ref{s:3}, and  positive super-harmonic functions. 

We fix briefly some notation. 
A weighted graph $\Gr$ is a triple  
 $\Gr:=(\Vr,\Er,m)$, where $\Vr$ denotes a countable set (the
 \emph{vertices} of $\Gr$),  $\Er$ a non-negative symmetric function
 on $\Vr  \times \Vr$  and $m$ a positive function on $\Vr$. We say
 that two 
 points $x,y \in \Vr$ are \emph{neighbors}  and we denote $x\sim y$ if
 $\Er(x,y)=\Er(y,x)>0$. We assume that $\Gr$ is locally finite in the
 sense  that each point of $\Vr$ has only  a finite  number of
 neighbors.  

The Laplacian then reads, for $f$ with finite support,
\[
\Delta_m f(x) = \frac{1}{m(x)} \sum_{y,y\sim x} \Er(x,y) (f(x)-f(y)). 
\]
We then consider its Friedrich extension and keep the same symbol. It
defines a non-negative self-adjoint operator. Its spectrum is thus
included in $[0,\infty)$.

Given $W:\Vr\to (0,+\infty)$, the Hardy inequality reads as follows:
\[
 \langle f, \Delta_m f \rangle_m \geq \langle
f, \frac{\tilde \Delta_m W}{W} f \rangle_m, 
\]
where $f:\Vr \to \R$ with finite support and $\tilde \Delta_m$ denotes
the the algebraic Laplacian (see Section 2.1). The heart of this
article is to exploit this inequality for some good choice(s) of
$W$. This method is very flexible:

\begin{itemize}
\item We  recover and improve lower bounds
for the spectrum and for the essential spectrum, see Section \ref{s:4}.
\item We improve some
criteria for the absence of the essential spectrum, see Section \ref{s:4}.
\item We  study
the eigenvalues below the 
essential spectrum   and obtain  Weyl asymptotics for the eigenvalues,
see Section \ref{s:5}.
\item We state an Allegretto-Piepenbrink type theorem for the
spectrum and the essential   spectrum, see Section \ref{s:6}. This
theorem links the bottom of the spectra with the existence of positive
super-harmonic functions.
\item We establish a probabilistic representation of
super-harmonic functions, see Section \ref{s:8}. As a corollary, we derive
a probabilistic understanding of 
the bottom of the spectrum and of the essential spectrum. 
\item For \emph{weakly spherically
    symmetric graphs} we prove that  the bottom of
  the spectrum and of the essential spectrum are governed only by the
  radial part of the Laplacian, see Section \ref{s:9}. 
\item  Using a coupling argument, we establish new
 comparison results for the bottom of the spectrum 
 and  the essential spectrum of different discrete Laplacians, see
 Section \ref{s:10}.
\item We derive a comparison result for the
 stochastic completeness, see Section \ref{s:11}. 
\end{itemize}
A main part of our work is to provide geometric criterion to ensure
the existence of positive super-harmonic functions; that is functions
$W: \Vr \to (0,\infty)$  satisfying 
\[
\tilde \Delta_m W(x) \geq \lambda(x) W(x),
\]
where $\lambda: \Vr \to [0,\infty)$ is some non-negative
function. These criterion are based on the geometric properties of a
1-dimensional decomposition of the graph. 
In many situations, this 1-dimensional decomposition is given by the
distance to a point or a finite set. 
Using min-max principles,  we then derive the lower bounds for the
bottom of  both the  spectrum and the essential spectrum  
(see Theorems \ref{t:lyap1}, \ref{thm-spectrum-bd} and Corollary
\ref{cor-ess-1}).  We also derive  lower bounds for the eigenvalue.

The Hardy inequality was already known in this discrete setting under
different names, e.g., \cite{CTT,  Go2, HK}.
Our present work is inspired by the work of \cite{CGWW}, where the
authors study diffusion operators  in a continuous setting and with a
finite invariant measure.  They  give criteria based on Lyapunov
functions to show that the Super-Poincar\'e Inequality holds.   

We mention that the Super-Poincar\'e Inequality was introduced
by Wang (see \cite{Wan1,Wan2,Wan3}) and is  equivalent to  a lower
bound of the essential spectrum. We do not rely on this approach but
this point of view enlightens about the situation. For the sake 
of completeness, we include the proofs of their results in Appendix
\ref{section-SPI}.  

The Hardy inequality directly gives one direction of the
Allegretto-Piepenbrik theorem (Theorem \ref{thm-reverse}). For the
other direction, knowing a lower bound on the spectrum or the
essential spectrum, one has to construct a positive super-harmonic
function. This was known for the spectrum   (e.g., \cite{HK})
but seems to be  new for the essential spectrum.

We then provide a  probabilistic representation of   
super-harmonic functions (see Theorems \ref{hitting2} and
\ref{hitting-cont}). It is interesting to compare with \cite{CGZ}.
The difference is that they control how the
stochastic process returns in a compact domain whereas we control how
the associated Markov process goes to infinity. An important tool is
the Harnack inequality that we borrow from \cite{HK}, see Section \ref{s:7}.

Next, we prove comparison results for  the bottom of the spectrum and
the essential spectrum of different weighted  Laplacians. Theorem
\ref{thm-comparaison-general}  
is an important improvement of Theorem 4 in \cite{KLW}. 
The main new ingredient in the proof of  Theorem
\ref{thm-comparaison-general} is a  coupling argument between the
associated stochastic processes  
(see Proposition \ref{good-coupling}). This coupling argument works
under a condition we called \emph{stronger weak-curvature growth}
which is strictly weaker than the \emph{stronger curvature growth}
condition of  Theorem 4 in \cite{KLW}.  Moreover, we  treat the
case of the essential spectrum. The coupling argument also
provides a comparison result for stochastic completeness (see Theorem
\ref{thm-comparaison-SC}).

Besides, we  study the class of \emph{weakly spherically symmetric graphs}, 
see Definition \ref{def-wss}. These graphs are a
slight generalization of the ones introduced in \cite{KLW}. We first
show weakly spherically symmetric graphs are exactly the graphs  such that
the radial part of their associated continuous time Markov chain is
also a 1-dimensional continuous time  Markov chain,  (see 
Propositions  \ref{prop-m-1d} and \ref{prop-bd-1d}).  
We then show  that  both the bottom of the spectrum and the essential
spectrum for the Laplacian on a weakly spherically symmetric graph
coincide with  the ones of their radial part  
(see Theorem \ref{cor-inf-sym}).

The paper is organized as follows. In Section \ref{s:2}, we present
the notation and we carefully introduce the  Laplacian.    
Section \ref{s:3} is devoted to the statement and a new proof of
the Hardy inequality. The lower bounds for the spectrum and the
essential spectrum are  obtained in  Section \ref{s:4}.  
In Section \ref{s:5}, we focus on eigenvalues. The estimates for the
eigenvalues  are very dependent of the intrinsic geometry of the
graphs.  The case of radial trees is given in Theorem
\ref{t:radial-tree}. Theorem 
\ref{prop-eigenvalue-gen-wss} treats the case of general weakly
spherically symmetric graphs.  
In  Section \ref{s:6}, we state and prove the Allegreto-Piepenbrik
type theorem (Theorem \ref{thm-reverse}).  
Section \ref{s:7} is devoted  to Harnack inequalities  for
positive super-harmonic functions. 
The construction of the discrete and continuous time Markov
chain associated to the Laplacian are made  in Section \ref{s:8}. We
also provide  
the  probabilistic representation of    super-harmonic functions (see
Theorems \ref{hitting2} and \ref{hitting-cont}). Section \ref{s:9}
is dedicated to the  study of the class of \emph{weakly spherically
  symmetric graphs}.   
In Section \ref{s:10}, using a coupling argument, 
we compare  the bottom of
the spectrum and the essential spectrum of two given weighted
Laplacians.  Section \ref{s:11} deals with stochastic completeness.
The construction of the Friedrichs extension of the Laplacian is
recalled in Appendix \ref{s:friedrichs} and Appendix \ref{section-SPI}
is devoted to the Super-Poincar\'e inequality.

\section{Notation}\label{s:2}

\subsection{The Laplacian on a graph}
Let us  consider  
 a graph $\Gr:=(\Vr,\Er,m)$ where $\Vr$ denotes a countable set of
 vertices of $\Gr$,  $\Er$ a non-negative symmetric function on $\Vr
 \times \Vr$  and $m$ a positive function on $\Vr$. We say that two
 points $x,y \in \Vr$ are neighbors   if
 $\Er(x,y)=\Er(y,x)>0$. In this case we write $x\sim y$. 
We assume that $\Gr$ is locally finite in the
 sense  that each point of $\Vr$ has only  a finite  number of
 neighbors. For simplicity, we also assume that each connected
 component of  $\Gr$ is infinite.

Let $\ell^2(\Vr,m)$ be the set of  functions $f:\Vr \to \C$ such that 
\[
\Vert f \Vert_{\ell^2(\Vr,m)} :=\sum_{x\in \Vr} |f(x)|^2 m(x).
\]
 $\ell^2(\Vr,m)$ is an Hilbert space with respect to the scalar product:
\[
 \langle f,g \rangle_m := \sum_{x\in \Vr} \overline{f(x)}g(x) m(x) \textrm{ for } f,g \in \ell^2(\Vr,m).
\]
For all $f,g\in \Cc_c(\Vr)$,  we introduce the quadratic form
\[
 Q(f,g):=\frac{1}{2} \sum_x \sum_y \Er(x,y) \overline{\left(f(x) - f(y)\right)}\left(g(x)-g(y)\right).
\]
Note that $Q$ is well-defined since the graph is locally finite.
This quadratic form  is non-negative, i.e., 
$Q(f,f)\geq 0$ for all $f\in \Cc_c(\Vr)$ and closable. There is a
unique self-adjoint operator $\Delta_\Gr$ such that
\[Q(f,f)= \langle f, \Delta_\Gr f\rangle\]
for all $f\in \Cc_c (\Vr)$ and $\Dc(\Delta_\Gr^{1/2})$ is the
completion of $\Cc_c (\Vr)$ under the norm $\|\cdot\| +
Q(\cdot,\cdot)^{1/2}$. We refer to Appendix \ref{s:friedrichs} for its
construction. We have:
 \begin{equation}\label{Lm}
\Delta_{\Gr} f (x) =  \frac{1}{m(x)} \sum_{y\in \Vr} \Er(x,y)
\left(f(x)-f(y)\right), \mbox{ for all }f\in \Cc_c (\Vr).
\end{equation} 
We call this operator the \emph{Laplacian} associated to the graph $\Gr$.
According to the context, we will also denote it by   $\Delta_{\Gr,m}$
or $\Delta_m$. We mention that  $\Delta_{\Gr}$ is the Friedrichs
extension of $\Delta_{\Gr}|_{\Cc_c(\Vr)}$. 

Note that $\Delta_\Gr|_{\Cc_c(\Vr)}$ is not necessarily essentially
self-adjoint. We refer to \cite{Go2} for a review of this matter.

We write with the symbol  $\tilde \Delta_\Gr$ the algebraic version of
$\Delta_\Gr$, i.e.,
\begin{equation*}
\tilde \Delta_{\Gr} f (x) =  \frac{1}{m(x)} \sum_{y\in \Vr} \Er(x,y)
\left(f(x)-f(y)\right), \mbox{ for all }f:\Vr \to \C.
\end{equation*} 
Recall that $\tilde \Delta$ is well-defined since $\Gr$ is locally finite.

\subsection{1-dimensional decomposition, distance function and
  degrees}

A \emph{1-dimensional decomposition} of the graph $\Gr:=(\Vr,\Er)$ is
a family of finite sets $(S_n)_{n\geq 0}$ which forms a partition of
$\Vr$, that is $\Vr=\sqcup_{n\geq 0} S_n$, and such that for all $x\in S_n,
y\in S_m$,  
\begin{eqnarray*}
 \Er (x,y)>0 \implies |n-m|\leq 1.
\end{eqnarray*} 
Given such a 1-dimensional decomposition, we write $|x|:=n$ if $x\in
S_n$. We also write $B_N:=\cup_{0\leq i\leq N} S_i$. A function
$f:\Vr\to \R$ is said to be \emph{radial}  if $f(x)$ depends only on
$|x|$.

Typical examples of such a 1-dimensional decomposition are given by
level sets of the graph  distance function to a finite set $S_0$ that
is     
\begin{equation}\label{e:1ddSn}
S_n:=\{x \in \Vr, d_\Gr(x,S_0)=n\},
\end{equation}
where the \emph{graph distance function} $d_\Gr$ is defined by  
\begin{equation}\label{distance}
 d_\Gr(x,y):= \min\{n\in \N, x\sim x_1\sim \dots
\sim x_n=y, x_i \in \Vr, i=1,\dots ,n\}. 
\end{equation} 
Note that for a general 1-dimensional decomposition, one has solely $d_\Gr(x,
S_0)\geq n$,  for $x\in S_n$. Given $x\in S_n$ and $k\geq-n$, we
  shall denote by
\begin{equation}\label{e:skx}
S_{k,x}:= \{y\in S_{n+k},\, d_\Gr(x,y)=|k|\}.
\end{equation}
We introduce the following \emph{unweighted degrees} of $x\in S_n$:
\[ \eta_0(x):=\sum_{y\in S_n
  }\Er(x,y),
\quad 
\eta_\pm(x):=\sum_{y\in S_{n\pm 1}}\Er(x,y), \]
with the convention that $S_{-1}=\emptyset$, i.e., $\eta_-(x)=0$ when
$x\in S_0$. The \emph{total unweighted degree} of $x$ is defined by:
\[\eta(x):= \eta_0(x)+\eta_-(x)+\eta_+(x)=  \sum_{y\in\Vr} \Er(x,y).\]
We stress that contrary to $\eta_\pm$ and $\eta_0$, $\eta$ is
independent of the choice of $1$-dimensional decomposition.  

We now divide by the weight and obtain new quantities of interest. 
We  call them the  \emph{(weighted) degrees} and  denote them by:
\begin{align*}
\deg_a(x):=&\frac{\eta_a(x)}{m(x)}, \quad \mbox{ where } a\in\{0, -,
+\},
\\
\deg(x):=&\frac{\eta(x)}{m(x)}= \deg_-(x)+\deg_0(x)+\deg_+(x).
\end{align*}
Again, note that $\deg$ is independent of the choice of a
$1$-dimensional decomposition. When $m(x)=\eta(x)$, we also write
$p_+(x),p_0(x),p_-(x)$ for $\deg_+(x),\deg_0(x),\deg_-(x)$,
respectively.   

In the same spirit, we also define:
\[
\deg(x,y):=\frac{\Er(x,y)}{m(x)}.
\]

We say that the graph $\Gr$ is \emph{simple} when  $ \Er:\Vr \times \Vr
\to \{ 0,1\}$. This definition is independent of the choice of the
weight $m$. 
In this case, when  $m=1$, the operator $\Delta_1$  is usually called
the \emph{combinatorial Laplacian} on the graph $\Gr$ whereas when
$m(x)= \eta (x)$,  
or equivalently when $\deg \equiv 1$, the operator $\Delta_\eta$ is
usually called the \emph{normalized Laplacian}.  

In the case of the combinatorial Laplacian $\Delta_1$ on a simple
graph, one has 
\[\deg_\pm(x)=\eta_\pm(x)= \# \{y, y\sim x, |y|=|x|\pm 1\}\]
and \[\deg_0(x)=\eta_0(x)=  \#  \{y, y\sim x, |y|=|x|\}.\]

Given a function $V:\Vr\to \C$, we
denote  by $V(\cdot)$ the operator of multiplication by $V$. It is
elementary that
$\Dc(\deg^{1/2}(\cdot))\subset\Dc(\Delta^{1/2}_{m})$. Indeed, one has:  
\begin{align}\nonumber
\langle f, \Delta_{m} f\rangle_m&= \frac{1}{2} \sum_{x\in \Vr} \sum_{y\sim
  x} \Er(x,y)|f(x)-f(y)|^2
\\
\label{e:majo}
&\leq \sum_{x\in \Vr} \sum_{y\sim 
  x} \Er(x,y)(|f(x)|^2+|f(y)|^2) = 2\langle f, \deg(\cdot) f\rangle_m,
\end{align}
for $f\in\Cc_c(\Vr)$. 
This inequality also gives a necessary condition for the absence of
essential spectrum for $\Delta_{m}$ (see \cite{Go2}[Corollary
2.3]). In \cite{Go2}[Proposition 4.5], we also prove 
that, in general, the constant $2$ cannot be improved. It
is also easy to see 
that $\Delta_{m}$ is bounded if and only if $\deg$ is
(e.g. \cite{Go2, KL1, KLW}).

\subsection{Weakly spherically symmetric graphs}
We introduce the class of weakly spherically symmetric graphs. Their
associated Laplacian will be study deeply in 
Section \ref{s:9} and \ref{s:10}.

\begin{definition}\label{def-wss}
Let  $\Gr:=(\Vr,\Er,m)$ be a weighted graph and let $(S_n)_{n\geq 0}$ be a  1-dimensional decomposition on $\Gr$.
We say that $\Gr$ is \emph{weakly spherically symmetric} with
    respect to $(S_n)_{n\geq 0}$ if the quantities $\deg_+(x)$ and
    $\deg_-(x)$ only depend on the quantity $|x|$. 
\end{definition}

It is easy to see that for a weakly spherically symmetric graph, the 1-dimensional decomposition  corresponds to the one obtained by taking the level sets of the distance function to the set $S_0$; that is we have
\[
S_n=\{x\in\Vr,d_\Gr(x,S_0)=n\}.
\]
This due to the fact that for $x\in S_n$, obviously, one has $\deg_-(x)>0$ thus $d_\Gr(y,S_{n-1})=1$.

If $S_0=\{x_0\}$,  we also say that  $\Gr=(\Vr,\Er,m)$ is
\emph{weakly spherically symmetric around} $x_0$. The definition
\ref{def-wss} is a slight generalization of the one in \cite{KLW}
where the authors only consider the case of weakly spherically
symmetric around a point $x_0$.  
In \cite{KLW}, the author shows  that  weakly spherically symmetric
graphs with $S_0=\{x_0\}$ are exactly the graphs such that the heat
kernel associated to $\Delta_m$, $p_t(x_0,\cdot)$, is a radial
function.  

The interest of our definition is that if $\Gr:=(\Vr,\Er,m)$ is weakly
spherically symmetric with respect to some 1-dimensional decomposition
$(S_n)_{n\geq 0}$ then so is the induced graph with vertex set $\Vr -
B_n$, $n\geq 0$. In particular, in Proposition \ref{prop-m-1d}, we
prove that with our definition,  weakly spherically symmetric graphs
correspond exactly to the graphs such that the radial part of the
associated continuous time Markov chain is still a continuous time
Markov chain.

\subsection{Decomposition of the Laplacian and bipartite
  graphs}\label{sec-decomp-laplacien}
We fix a weighted graph $\Gr:=(\Vr, \Er, m)$ and $(S_n)_{n\in \N}$ a
$1$-dimensional  decomposition of $\Vr$ 
We decompose the Laplacian in the following way:
\begin{equation}\label{decomp-laplacien}
\Delta_m  = \deg(\cdot) - A_{m,{\rm bp}}-A_{m,{\rm sp}},
\end{equation}
where
\[
 A_{m,{\rm bp}} f(x):= \frac{1}{m(x)}\sum_{y,|y|\neq |x|} \Er(x,y)f(y)\]
and 
\[A_{m,{\rm sp}} f(x) :=\frac{1}{m(x)}\sum_{y,|y|= |x|} \Er(x,y)f(y).
\]
Here $bp$ and $sp$ stand for bi-partite and spherical, respectively. 

 Let $U$ be unitary operator defined by $U f(x):=
(-1) ^{|x|} f(x)$, then  
\begin{equation}\label{decomp-laplacien2}
U \Delta_m U  = \deg(\cdot) + A_{m,{\rm bp}} -A_{m,{\rm sp}} = 2
\deg(\cdot) - \Delta_m -2 A_{m,{\rm sp}}.
\end{equation}
Note if $\eta_0 \equiv 0$ thus
$\Delta_m= \deg(\cdot) - A_{m,{\rm bp}}$ and 
\[U \Delta_m U=
\deg(\cdot)+ A_{m,{\rm bp}}= 2\, \deg(\cdot)  - \Delta_m.
\]
In particular, when $m=\eta$, 
\[U \Delta_\eta  U= 1+ A_{\eta,{\rm bp}} = 2- \Delta_\eta.\]
In this last case, this directly yields:
\begin{proposition}\label{prop-spectrum-bp}
Let $\Gr:=(\Vr,\Er,m)$ be a bi-partite graph with $m=\eta$. Let $\Wr$ be
any subset  of  $\Vr$, then 
the spectrum $\sigma \left(\Delta_\eta^\Wr\right)$ is symmetric with
respect to $1$.  
\end{proposition}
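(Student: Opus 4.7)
The plan is to lift the unitary conjugation identity already derived in the text for $\Delta_\eta$ on all of $\Vr$ to the restricted operator $\Delta_\eta^{\Wr}$.

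First I would define on $\ell^2(\Wr,m)$ the unitary operator $U_\Wr$ by $U_\Wr f(x) := (-1)^{|x|} f(x)$, where $|x|$ refers to any fixed $2$-coloring of $\Vr$ (for a bipartite graph, such a coloring exists: put all vertices at even graph-distance from a chosen connected-component basepoint in one class, the others in the other class; equivalently, take a $1$-dimensional decomposition for which $\eta_0\equiv 0$). Since $U_\Wr$ is multiplication by $\pm 1$, it is self-adjoint with $U_\Wr^2=\mathrm{Id}$, hence unitary, and it preserves $\mathcal{C}_c(\Wr)$.

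Next I would observe that the algebraic computation leading to
\[
U\, \tilde\Delta_\eta\, U \;=\; 2 - \tilde\Delta_\eta
\]
on $\Vr$ used only the bipartiteness ($\eta_0\equiv 0$) plus the fact that $m=\eta$, both of which are inherited by the induced subgraph on $\Wr$. Thus the same pointwise identity
\[
U_\Wr\, \tilde\Delta_\eta^{\Wr}\, U_\Wr f(x) \;=\; \bigl(2 - \tilde\Delta_\eta^{\Wr}\bigr) f(x)
\]
holds for all $f\in\mathcal{C}_c(\Wr)$. Taking Friedrichs extensions (as recalled in Appendix \ref{s:friedrichs}) and using that conjugation by a bounded self-adjoint involution commutes with the closure of a symmetric quadratic form, I would deduce
\[
U_\Wr\, \Delta_\eta^{\Wr}\, U_\Wr \;=\; 2 - \Delta_\eta^{\Wr}
\]
as an identity of self-adjoint operators on $\ell^2(\Wr,m)$.

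Finally, since unitarily equivalent operators share spectrum, I would conclude
\[
\sigma\bigl(\Delta_\eta^{\Wr}\bigr) \;=\; \sigma\bigl(2 - \Delta_\eta^{\Wr}\bigr) \;=\; 2 - \sigma\bigl(\Delta_\eta^{\Wr}\bigr),
\]
which is exactly the statement that $\sigma(\Delta_\eta^{\Wr})$ is symmetric about $1$. The only real subtlety, and the main point worth checking carefully, is the passage from the algebraic identity on $\mathcal{C}_c(\Wr)$ to the self-adjoint identity after Friedrichs extension; this is harmless because $U_\Wr$ preserves the form core and the form $Q^\Wr(f,f)$ transforms to $2\|f\|^2 - Q^\Wr(f,f)$ plus the relevant boundary contribution built into the definition of $\Delta_\eta^\Wr$.
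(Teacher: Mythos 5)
Your proof is correct and follows the same route the paper intends: conjugating the Dirichlet operator by the sign unitary $U_\Wr f(x)=(-1)^{|x|}f(x)$ and using bipartiteness together with $\deg\equiv 1$ to get $U_\Wr\,\Delta_\eta^\Wr\,U_\Wr = 2-\Delta_\eta^\Wr$, whence the spectral symmetry about $1$. The extension subtlety you flag is in fact harmless for an even simpler reason: since $m=\eta$ forces $0\le \Delta_\eta^\Wr\le 2$ by \eqref{e:majo}, the operator is bounded and the identity on $\Cc_c(\Wr)$ extends by continuity.
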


\subsection{The Dirichlet Laplacian and Persson's lemma}

Let $\Ur$ be any subset of $\Vr$.
First, we define $\textrm{Int}\, \Ur:=\{x\in \Ur, y\sim x \Rightarrow y\in
\Ur\}$ the interior of $\Ur$ and  $\partial \Ur:=\{x\in \Ur, \exists y \in \Ur^c
, y\sim x\}$ the boundary of $\Ur$. 

 We call $\Gr^\Ur:=(\Ur,\Er^\Ur,m)$ the \emph{induced graph} on $\Ur$ where
 $\Er^\Ur$ is defined on $\Ur\times \Ur$ by $\Er^\Ur(x,y):=\Er(x,y), x,y\in
 \Ur$. 

We denote by $\Delta_\Gr^\Ur$ the associated \emph{Dirichlet
  Laplacian}. It is defined as follows:   for $f: \Ur \to \C$ with
compact support, we define $\tilde f:  \Vr\to \C$ by $\tilde f
(x)=f(x)$, if $x\in \Ur$ and $\tilde f =0$ otherwise, we set: 

\[
 \Delta_\Gr^\Ur  f (x):= \Delta_\Gr \tilde f(x), \mbox{ for all } x\in \Ur.
\]
Note that $\Delta_\Gr^\Ur$ is a self-adjoint operator acting in
$\ell^2(\Ur)$. It is the Friedrichs extension of
$\Delta_\Gr^\Ur|_{\Cc_c(\Ur)}$.

The infimum of the essential spectrum of $\Delta_m$ is classically
described by the 
Persson Lemma, e.g., \cite{KL1}[Proposition 18].  One reads: 
\begin{align}\nonumber
\inf \sigma_{\rm ess} (\Delta_\Gr)&= \sup_{\Kr\subset \Vr
  \rm{finite}} \inf \sigma\left( \Delta_\Gr^{\Kr^c}\right)
\\ \label{e:Persson}
&= \sup_{\Kr\subset \Vr
  \rm{finite}}\quad \inf_{f\in \Cc_c(\Vr\setminus   \Kr), \|f\|=1} \langle f,
\Delta_\Gr f\rangle.  
\end{align}
Note that if $\Delta_\Gr$ is bounded from above we also have
\begin{align*}\nonumber
\sup \sigma_{\rm ess} (\Delta_\Gr)&= \inf_{\Kr\subset \Vr
  \rm{finite}} \sup \sigma\left( \Delta_\Gr^{\Kr^c}\right)
\\ \label{e:Persson}
&= \inf_{\Kr\subset \Vr
  \rm{finite}}\quad \sup_{f\in \Cc_c(\Vr\setminus   \Kr), \|f\|=1} \langle f,
\Delta_\Gr f\rangle.  
\end{align*}

We adapt the Upside-Down-Lemma of \cite{BGK} which was inspired from
\cite{DM}.  

\begin{lemma}[Upside-Down-Lemma]\label{l:upsidedown1}
 Let $\Gr:=(\Vr, \Er, m)$  be a weighted graph, $q:\Vr\to \R$ and $\Ur \subset
 \Vr$. Assume  there are $ a\in(0,1),$ $ k\geq  0$ such that for all
 $f\in \Cc_c(\Ur)$,  
\begin{align*}
(1- a)\langle f, (\deg+q)(\cdot) f \rangle_m  - k\Vert
f\Vert^2_m \leq  \langle f,\Delta^\Ur_\Gr f + q(\cdot) f \rangle_m,
\end{align*}
then  for all $f\in \Cc_c(\Ur)$, we also have
\begin{align*} \nonumber
 \langle f,\Delta^\Ur_\Gr f+ q(\cdot) f  \rangle_m  \leq  (1+ a)\langle f, 
 (\deg+q)(\cdot) f \rangle_m +  k\Vert f\Vert^2_m.
\end{align*}
\end{lemma}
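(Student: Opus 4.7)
The plan is to apply the assumed lower bound with $Uf$ in place of $f$, where $U$ is the parity unitary from Section~\ref{sec-decomp-laplacien} defined by $Uf(x) = (-1)^{|x|} f(x)$. The point is that $U$ is a self-adjoint unitary ($U^2 = \Id$), preserves $\Cc_c(\Ur)$, and commutes with every multiplication operator. In particular $\|Uf\|_m = \|f\|_m$, and both $\langle Uf, (\deg+q)(\cdot) Uf\rangle_m$ and $\langle Uf, q(\cdot) Uf\rangle_m$ coincide with their counterparts for $f$, so only the quadratic form of $\Delta^\Ur_\Gr$ changes under $f \mapsto Uf$.

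The substantive step is to rewrite $\langle Uf, \Delta^\Ur_\Gr Uf\rangle_m$ using the decomposition \eqref{decomp-laplacien2}. Extending $f$ by zero to $\tilde f : \Vr \to \C$, one has $\widetilde{Uf} = U\tilde f$, since extension by zero commutes with the sign change $(-1)^{|\cdot|}$. By the very definition of the Dirichlet Laplacian,
\[
\langle Uf, \Delta^\Ur_\Gr Uf\rangle_m = \langle U\tilde f, \Delta_\Gr U\tilde f\rangle_m = \langle \tilde f, U \Delta_\Gr U \tilde f\rangle_m.
\]
Using \eqref{decomp-laplacien2} in the form $U\Delta_\Gr U = 2\deg(\cdot) - \Delta_\Gr - 2 A_{m,{\rm sp}}$ and restricting to functions supported in $\Ur$, this becomes
\[
\langle Uf, \Delta^\Ur_\Gr Uf\rangle_m = 2\langle f, \deg(\cdot) f\rangle_m - \langle f, \Delta^\Ur_\Gr f\rangle_m - 2\langle f, A^\Ur_{m,{\rm sp}} f\rangle_m.
\]

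Substituting this identity into the hypothesis applied to $Uf$ and isolating $\langle f, \Delta^\Ur_\Gr f\rangle_m$ yields, after adding $\langle f, q(\cdot) f\rangle_m$ to both sides,
\[
\langle f, (\Delta^\Ur_\Gr + q(\cdot)) f\rangle_m \leq (1+a)\langle f, (\deg+q)(\cdot) f\rangle_m + k\|f\|_m^2 - 2\langle f, A^\Ur_{m,{\rm sp}} f\rangle_m,
\]
which matches the claimed inequality up to the within-shell contribution $-2\langle f, A^\Ur_{m,{\rm sp}} f\rangle_m$. The main point requiring care is precisely this term: it vanishes in the bipartite setting ($\eta_0 \equiv 0$ on $\Ur$), which is the context in which the adaptation of the Upside-Down-Lemma of \cite{BGK} is used. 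Apart from that, the argument is linear algebra, with the unitary $U$ doing all the geometric work by swapping the roles of the lower and upper bounds.
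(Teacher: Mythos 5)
Your proof has a genuine gap. The extra term $-2\langle f, A^\Ur_{m,{\rm sp}} f\rangle_m$ that you correctly identify does \emph{not} have a sign: take $x\sim y$ in the same shell $S_n$ with $\Er(x,y)>0$, and set $f(x)=1$, $f(y)=-1$, $f=0$ elsewhere; then $\langle f, A^\Ur_{m,{\rm sp}} f\rangle_m = -2\Er(x,y)<0$, so $-2\langle f, A^\Ur_{m,{\rm sp}} f\rangle_m>0$, which worsens, rather than improves, the claimed bound. Your proposal only establishes the conclusion when $\eta_0\equiv 0$ on $\Ur$, i.e.\ essentially the bipartite case. But you are mistaken that ``this is the context in which the adaptation of the Upside-Down-Lemma of \cite{BGK} is used'': the lemma is stated for an arbitrary weighted graph with no bipartiteness assumption, and it is invoked with full generality in Proposition \ref{p:upsidedowness} and in the proof of Theorem \ref{t:lyap2}, neither of which assumes $\eta_0\equiv 0$. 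A further structural objection is that the lemma itself mentions no $1$-dimensional decomposition, so the parity sign $(-1)^{|x|}$ you introduce is an ad hoc choice not present in the hypotheses.

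The paper's proof avoids this by substituting $|f|$ for $f$ instead of $Uf$. The key observation is the algebraic identity
$\langle f, (2\deg(\cdot)-\Delta_\Gr^\Ur) f\rangle_m = \tfrac12 \sum_{x\sim y}\Er(x,y)|f(x)+f(y)|^2$,
combined with the pointwise inequality $|f(x)+f(y)| \geq \bigl||f(x)|-|f(y)|\bigr|$, to conclude
$\langle f, (2\deg(\cdot)-\Delta_\Gr^\Ur) f\rangle_m \geq \langle |f|, \Delta_\Gr^\Ur |f|\rangle_m$.
One then applies the hypothesis to $|f|$ (which is again in $\Cc_c(\Ur)$, preserves the $\ell^2$ norm, and commutes with every multiplication operator) and rearranges. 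This is morally the same strategy you attempted --- use a norm-preserving, multiplication-commuting substitution that flips $\Delta^\Ur$ into $2\deg-\Delta^\Ur$ --- but $|f|$ does what $Uf$ cannot, because the absolute value automatically ``aligns the signs'' on within-shell edges, which is exactly what the unwanted $A_{m,{\rm sp}}$ term measures. If you want to salvage your argument, you would need to first reduce to non-negative $f$ (replacing $f$ by $|f|$), at which point you might as well run the paper's argument directly.
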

\proof By a direct calculation we find for $f\in\Cc_c(\Ur)$
\begin{align*}
    \langle f, (2\,\deg(\cdot)-\Delta_\Gr^\Ur)f\rangle_m&=\frac{1}{2}\sum_{x,y\in\Vr,x\sim
      y}\Er(x,y)(2|f(x)|^{2}+ 2|f(y)|^{2})-|f(x)-f(y)|^{2})\\
    &=\frac{1}{2}\sum_{x,y,x\sim y} \Er(x,y)|f(x)+f(y)|^{2}
\\
&  \geq\frac{1}{2}\sum_{x,y,x\sim y}\Er(x,y)\left| |f(x)|-|f(y)|
    \right|^{2}
\\
&=\langle |f|, \Delta_\Gr^\Ur|f|\rangle_m.
\end{align*}
Using the assumption gives after reordering
\begin{align*}
   \langle f, \Delta_\Gr^\Ur f +q(\cdot)  f\rangle_m-\langle f,
   (2\deg+q)(\cdot)  f\rangle_m&\leq 
   -\langle|f|, \Delta_\Gr^\Ur |f|\rangle_m\\
   &\leq-(1- a)\langle |f|, (\deg+q)(\cdot) |f|\rangle_m + k\langle
   |f|,|f|\rangle_m\\
   &=-(1- a)\langle f,  (\deg+q)(\cdot)f\rangle_m + k\langle  f,f\rangle_m
\end{align*}
which yields the assertion.\qed

Combining the upside-down Lemma and the Persson criteria we
derive immediately the following proposition.
\begin{proposition}\label{p:upsidedowness}
Let $\Gr:=(\Er, \Vr)$ be a graph. Assume that there is $a>0$ such that
\[\inf \sigma_{\rm ess}(\Delta_\eta) \geq 1-a\]
then 
 \[\sup \sigma_{\rm ess}(\Delta_\eta) \leq 1+a.\]
\end{proposition}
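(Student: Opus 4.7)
The plan is to sandwich the argument between the two Persson formulas around (\ref{e:Persson}), with Lemma \ref{l:upsidedown1} performing the symmetry flip about $1$ in the middle. The crucial structural point is that $m = \eta$ forces $\deg \equiv 1$, so once we take $q \equiv 0$ the reference quantity $\langle f, (\deg + q)(\cdot) f\rangle_m$ in the Upside-Down Lemma collapses to $\|f\|_\eta^2$. This is what lets us rewrite a one-sided spectral estimate as the hypothesis of that lemma.

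First I dispose of the degenerate range $a \geq 1$: inequality (\ref{e:majo}) combined with $\deg \equiv 1$ gives $\Delta_\eta \leq 2$ as an operator, hence $\sup\sigma_{\rm ess}(\Delta_\eta) \leq 2 \leq 1+a$ with nothing to prove. Now assume $a \in (0,1)$ and fix an arbitrary $\epsilon \in (0, 1-a)$. By the lower Persson formula in (\ref{e:Persson}) and the hypothesis $\inf\sigma_{\rm ess}(\Delta_\eta) \geq 1-a$, I can choose a finite set $\Kr \subset \Vr$ such that
$$\langle f, \Delta_\Gr^{\Kr^c} f\rangle_\eta \;\geq\; (1-a-\epsilon)\,\|f\|_\eta^2 \qquad \text{for all } f \in \Cc_c(\Kr^c).$$
Since $\deg \equiv 1$, this is precisely the hypothesis of Lemma \ref{l:upsidedown1} applied with $\Ur = \Kr^c$, $q \equiv 0$, the lemma's parameter taken to be $a+\epsilon \in (0,1)$, and $k = 0$. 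The conclusion of the lemma then reads
$$\langle f, \Delta_\Gr^{\Kr^c} f\rangle_\eta \;\leq\; (1+a+\epsilon)\,\|f\|_\eta^2 \qquad \text{for all } f \in \Cc_c(\Kr^c),$$
which says exactly that $\sup\sigma(\Delta_\Gr^{\Kr^c}) \leq 1+a+\epsilon$.

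To finish I invoke the upper Persson formula stated just after (\ref{e:Persson}); its validity is guaranteed here because $\Delta_\eta$ is bounded (a direct consequence of $\deg \equiv 1$, as recalled in the paper). It yields $\sup\sigma_{\rm ess}(\Delta_\eta) \leq \sup\sigma(\Delta_\Gr^{\Kr^c}) \leq 1+a+\epsilon$, and letting $\epsilon \to 0^+$ completes the proof. I do not foresee any substantive obstacle: the only care point is the $\epsilon$-buffer that keeps the Upside-Down Lemma's parameter strictly inside $(0,1)$, and this buffer is supplied for free by the supremum-over-finite-$\Kr$ form of Persson's lemma.
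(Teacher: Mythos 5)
Your proof is correct and follows the same route the paper indicates (the text states that Proposition~\ref{p:upsidedowness} follows "immediately" from the Upside-Down Lemma and Persson's criteria): apply the lower Persson formula to convert the hypothesis into the form hypothesis of Lemma~\ref{l:upsidedown1} on $\Cc_c(\Kr^c)$ with $q\equiv 0$, $\deg\equiv 1$, $k=0$, flip it to an upper form bound, and then feed it into the upper Persson formula, whose applicability you correctly justify by the boundedness of $\Delta_\eta$. Your handling of the degenerate range $a\geq 1$ and the $\epsilon$-buffer keeping the lemma's parameter in $(0,1)$ are the right care points and are dispatched correctly.
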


\section{Hardy inequality and its links with super-harmonic
  functions}\label{s:3} 

In this paper, one major tool is the following Hardy inequality. The
terminology comes from \cite{Go2}. The idea
is to bound the Laplacian from below by
a potential and to reduce its analysis to it. This technique has
already be successfully used in \cite{HK} for some ground state
related problem and in \cite{Go2}  to obtain some Weyl asymptotic.

\begin{proposition}[Hardy inequality]\label{prop:hardy}
Let $W$ be a positive function on $\Vr$, then for all $f\in \Cc_c(\Vr)$, 
\begin{equation}\label{Hardy}
\mathcal Q(f,f) = \langle f, \Delta_m f \rangle_m \geq \langle
f, \frac{\tilde \Delta_m W}{W} f \rangle_m. 
\end{equation}
\end{proposition}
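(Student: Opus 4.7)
My plan is to establish Hardy's inequality via a pointwise algebraic identity obtained by the substitution $f = Wg$.

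First, I would expand the quadratic form using the standard formula
\begin{equation*}
\langle f, \Delta_m f\rangle_m = \frac{1}{2} \sum_{x,y \in \Vr} \Er(x,y) |f(x)-f(y)|^2,
\end{equation*}
and rewrite the right-hand side of \eqref{Hardy} as
\begin{equation*}
\Big\langle f, \frac{\tilde \Delta_m W}{W} f\Big\rangle_m = \sum_{x \in \Vr} \sum_{y \sim x} \Er(x,y)\, \frac{W(x)-W(y)}{W(x)}\, |f(x)|^2.
\end{equation*}
After a symmetrization in $(x,y)$, this last expression equals
\begin{equation*}
\frac{1}{2}\sum_{x,y} \Er(x,y)\left[\frac{W(x)-W(y)}{W(x)}|f(x)|^2 + \frac{W(y)-W(x)}{W(y)}|f(y)|^2\right].
\end{equation*}
Hence the inequality reduces to a pointwise statement in each pair $(x,y)$ with $\Er(x,y)>0$.

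Next, since $W$ is strictly positive I set $g := f/W$, which still has compact support. A direct computation (substituting $f(x)=W(x)g(x)$, $f(y)=W(y)g(y)$ and expanding $|W(x)g(x)-W(y)g(y)|^2$) gives the identity
\begin{equation*}
|f(x)-f(y)|^2 - \frac{W(x)-W(y)}{W(x)}|f(x)|^2 - \frac{W(y)-W(x)}{W(y)}|f(y)|^2 = W(x)W(y)\,|g(x)-g(y)|^2.
\end{equation*}
The key cancellation is that the squared terms $W(x)^2|g(x)|^2$ and $W(y)^2|g(y)|^2$ coming from $|f(x)-f(y)|^2$ are exactly eliminated by the two correction terms, leaving only the manifestly non-negative cross term $W(x)W(y)|g(x)-g(y)|^2$.

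Finally, multiplying this identity by $\tfrac{1}{2}\Er(x,y)$ and summing over all $x,y$ yields
\begin{equation*}
\langle f, \Delta_m f\rangle_m - \Big\langle f, \frac{\tilde\Delta_m W}{W} f\Big\rangle_m = \frac{1}{2}\sum_{x,y}\Er(x,y)\,W(x)W(y)\,|g(x)-g(y)|^2 \geq 0,
\end{equation*}
which is \eqref{Hardy}. I do not expect a real obstacle here: the content is a one-line algebraic identity, and the only care needed is the symmetrization step and the verification that all sums are finite, which follows from $f \in \Cc_c(\Vr)$ and local finiteness of the graph. As a bonus, the identity exhibits the equality case ($g$ constant on each connected component) which may be useful later in the paper.
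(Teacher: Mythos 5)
Your proof is correct. It shares its skeleton with the paper's argument --- both expand $\langle f, \tfrac{\tilde\Delta_m W}{W} f\rangle_m$ as a double sum over edges and symmetrize in $(x,y)$ using the symmetry of $\Er$ --- but the final step is genuinely different. The paper bounds the symmetrized term by the two elementary estimates
\[
\tfrac{1}{2}\Bigl(\tfrac{W(y)}{W(x)}|f(x)|^2+\tfrac{W(x)}{W(y)}|f(y)|^2\Bigr)\;\geq\;|f(x)||f(y)|\;\geq\;\Re\bigl(\overline f(x)f(y)\bigr),
\]
whereas you substitute $f=Wg$ and record the exact pointwise remainder $W(x)W(y)|g(x)-g(y)|^2$, i.e.\ the discrete Picone/ground-state representation
\[
\langle f, \Delta_m f\rangle_m-\Bigl\langle f, \tfrac{\tilde\Delta_m W}{W} f\Bigr\rangle_m=\tfrac{1}{2}\sum_{x,y}\Er(x,y)\,W(x)W(y)\,|g(x)-g(y)|^2 .
\]
This is precisely the identity the authors allude to in the remark following the proposition (``an integrated version of Picone's identity''), so you have in effect written out that remark; what it buys over the paper's one-line estimate is the exact formula for the deficit and hence the equality case (note that with compact support and infinite connected components, equality actually forces $f\equiv 0$). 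Your finiteness discussion (compact support plus local finiteness) is adequate, and the identity itself checks out for complex-valued $f$ since $W$ is real and positive.
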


Here  we recall that $\tilde \Delta_m W$ has to be understood in a algebraical sense since in general $W$ is
general not a $\ell^2$ function. We mention that there are other
techniques to bound the Laplacian from below by a potential and refer
to \cite{CTT,   CTT2, MilTru}.

The inequality of Proposition \ref{prop:hardy}  is well-known in the continuous setting. 
 It can be seen as an integrated version of
Picone's identity (see for example \cite{allegretto}). It also appears in the work \cite{CGWW}.

We point out that the formulation of \eqref{Hardy} is equivalent to
the one used in \cite{HK, Go2}. We shall present an alternative proof,
which is closer to the one of \cite{CGWW}.  We shall
only use the reversibility of the measure $m$.

\proof  Take $f\in \Cc_c(\Vr)$,
\begin{align*}
\left\langle f , \frac{\tilde \Delta_m W}{W} f\right\rangle_m 
                                       &=  \sum_x \sum_y \Er(x,y)
                                       \left(|f|^2(x)
                                         -\frac{W(y)}{W(x)}|f|^2(x)\right)\\
&=\sum_x \sum_y \Er(x,y) \left(|f|^2(x) -
  \frac{1}{2}\left(\frac{W(y)}{W(x)} |f|^2(x) +
    \frac{W(x)}{W(y)}|f|^2(y)\right)\right). 
\\
&\leq \sum_x \sum_y
\Er(x,y) \left(|f|^2(x) - \Re \left(\overline f(x)f(y)\right)\right)\\
&= \frac{1}{2} \sum_x \sum_y \Er(x,y) \left|f(x) - f(y)\right|^2=
\mathcal Q(f,f). 
\end{align*}
This is the announced result. \qed

The aim of this work is to investigate the links between some properties of the spectrum
of the Laplacian and the existence of some positive
function $W$  which satisfies 
\begin{equation}\label{Dmlsh}
\frac{\tilde \Delta_m W}{W} (x) \geq \lambda(x), 
\end{equation} 
for all $x\in \Vr$ and for some function $\lambda$ which is
non-negative away from a compact.  Clearly, given $m,m':\Vr\to
(0,+\infty)$, a function $W$ 
 satisfies (\ref{Dmlsh}) for  $m$ if and only if it satisfies
 (\ref{Dmlsh}) for  $m'$ where:
\[\frac{\tilde \Delta_{m'} W}{W} (\cdot) \geq \psi(\cdot), \mbox{ with
}\psi(\cdot)=\frac{m(\cdot)}{m'(\cdot)}\lambda(\cdot). \]
This simple fact enlightens about the flexibility of our method.

Note that in the literature, when $\lambda$ is constant, these
functions $W$ are sometimes called  positive $\lambda$-super-harmonic
functions. In a different field, they are also called Lyapunov
functions. We rely on the next definition.

\begin{definition}
A positive function $W$ is called a \emph{Lyapunov function} if  there exist
$\lambda$ a positive function,  $b>0$, and a finite set $B_{r_0}$ such
that for all $x\in \Vr$,  
\beq \label{lyap1}
\frac{\tilde \Delta_m W}{W} (x)\geq  \lambda(x) - b \bone_{B_{r_0}}(x). 
\eeq

A positive function $W$ is called a \emph{super-harmonic function} if
there exist 
$\lambda$ a non-negative  function such
that for all $x\in \Vr$,  
\beq \label{superharmonicfunction}
\frac{\tilde \Delta_m W}{W} (x)\geq  \lambda(x) . 
\eeq
\end{definition}

\begin{remark} Usually, for Lyapunov functions, the condition $W\geq
  1$ is  also required and they are used to control the return time in
  a compact region (see \cite{CGWW}). Here we do the contrary and our Lyapunov
  functions  control how the process goes to infinity (see section
  \ref{s:8}). They are  
  non-increasing  in our applications. Therefore, we shall not impose that
  $W \geq 1$.  
\end{remark}

Moreover, in some situations, we will have to consider family of
super-harmonic functions. We set:

\begin{definition}\label{d:exhau}
Given a graph $\Gr:=(\Er,\Vr)$, we call a sequence $(\Vr_n)_n$ of
finite and connected subsets of  $\Vr$ \emph{exhaustive} if 
$\Vr_n\subset \Vr_{n+1}$,  and $\cup_n \Vr_n=\Vr$. 
\end{definition}

\begin{definition}
Set a graph $\Gr:=(\Er,\Vr)$. 
A family of positive functions $(W_n)_n$ is called
\emph{a family of super-harmonic functions relative to an exhaustive
  sequence} $(\Vr_n)_n$   if there exists a non-zero and non-negative function
$\lambda:\Vr\rightarrow \R^+$ such that
\beq \label{lyapf1}
\Delta_{\Gr} W_n (x)\geq  \lambda(x) W_n(x),
\eeq
for all $x\in \Vr_n$.
\end{definition}

\section{Super-harmonic functions,  essential  spectrum, and  
minoration of  eigenvalues}\label{s:4}   

In this section, we construct Lyapunov  and super-harmonic functions
for the Laplacian on some weighted graphs  and study the (essential)
spectrum of the associated 
Laplacian. We compare our approach with the ones obtained by
isoperimetrical techniques and provide some minoration of the
eigenvalues which are below the essential spectrum. 

\subsection{A few words about the isoperimetrical approach}

Given a function $m:\Vr\to(0, \infty)$ and $U\subseteq \Vr$, we define the
\emph{isoperimetric constant} as follows:
\[ 
\alpha_m (U):=\inf_{K,K \subseteq U\subset \Vr} \frac{L(\partial
  K)}{m(K)}, \mbox{ where } L(\partial
  K):=\langle  \bone_{\partial K},
  \Delta_m \bone_{\partial K} \rangle_m=\langle  \bone_K,
  \Delta_m \bone_K \rangle_m.\]
Note that $L(\partial  K)$ is independent of $m$.  Trivially, one has
that $\alpha_m (U)\geq \inf \sigma(\Delta_{m}^U)$. However, it is
important to notice that this quantity is also useful to estimate from
below the Laplacian.  One obtains in \cite{KL2}[Proposition 15] (see
also \cite{D1, DK, Kel} and references therein), the following result.  

\begin{proposition}[Keller-Lenz]\label{p:Keller-Lenz}
Given $\Gr:=(\Vr, \Er, m)$, then
\begin{align}\label{e:Keller-Lenz}
\inf \sigma\left(\Delta_{m}^U\right) \geq d_U\left(1 -
  \sqrt{1-\alpha_\eta(U)^2}\right), 
\end{align}
where $d_U:= \inf_{x\in U}\deg(x)$. Moreover, if $D_U:=\sup_{x\in
  U}\deg(x) <+\infty$, we obtain:
\[
\inf \sigma\left(\Delta_{m}^U\right) \geq \left({D_U} -
  \sqrt{D_U^2-\alpha_m(U)^2}\right). 
\]
\end{proposition}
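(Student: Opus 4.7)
The plan is to prove a Cheeger-type estimate by combining the discrete co-area formula with a Cauchy-Schwarz trick. By the variational characterization of $\inf\sigma(\Delta_m^U)$ it suffices, for each $f\in\Cc_c(U)$, to produce a lower bound of the claimed form on $\mathcal Q(f,f)/\|f\|_m^2$. One may assume $f\geq 0$, since $\mathcal Q(|f|,|f|)\leq \mathcal Q(f,f)$ (the same estimate already used in Lemma \ref{l:upsidedown1}).

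The first step is the factorization $f^2(x)-f^2(y)=(f(x)-f(y))(f(x)+f(y))$ combined with Cauchy-Schwarz, giving
$$\sum_{x,y}\Er(x,y)|f^2(x)-f^2(y)|\leq\sqrt{2\mathcal Q(f,f)}\cdot\sqrt{\sum_{x,y}\Er(x,y)(f(x)+f(y))^2},$$
while expanding the square and using $\eta(x)=m(x)\deg(x)$ gives $\sum_{x,y}\Er(x,y)(f(x)+f(y))^2 = 4\langle f,\deg(\cdot)f\rangle_m - 2\mathcal Q(f,f)$. The second step is the co-area identity $f^2(x)=\int_0^\infty\bone_{\{f^2>t\}}(x)\,dt$: since for fixed $x,y$ the indicator difference $\bone_{K_t}(x)-\bone_{K_t}(y)$ does not change sign in $t$, one gets
$$\sum_{x,y}\Er(x,y)|f^2(x)-f^2(y)|=2\int_0^\infty L(\partial K_t)\,dt,\qquad K_t:=\{f^2>t\}\subset U.$$
Each $K_t$ lies in $U$ because $\supp f\subset U$, so the definition of $\alpha_\mu(U)$ (applied level by level, for $\mu\in\{\eta,m\}$) gives $L(\partial K_t)\geq\alpha_\mu(U)\,\mu(K_t)$, and integrating in $t$ yields the lower bound $2\alpha_\eta(U)\langle f,\deg(\cdot)f\rangle_m$ when $\mu=\eta$ and $2\alpha_m(U)\|f\|_m^2$ when $\mu=m$.

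Setting $A=\mathcal Q(f,f)$ and $B=\langle f,\deg(\cdot)f\rangle_m$, the case $\mu=\eta$ combines into the quadratic inequality $A^2-2AB+\alpha_\eta(U)^2 B^2\leq 0$, which forces $A\geq B\bigl(1-\sqrt{1-\alpha_\eta(U)^2}\bigr)\geq d_U\bigl(1-\sqrt{1-\alpha_\eta(U)^2}\bigr)\|f\|_m^2$ — this is the first bound. The case $\mu=m$ gives $A^2-2AB+\alpha_m(U)^2\|f\|_m^4\leq 0$, hence $A\geq B-\sqrt{B^2-\alpha_m(U)^2\|f\|_m^4}$; the right-hand side is decreasing in $B$ on $[\alpha_m(U)\|f\|_m^2,+\infty)$, so substituting the worst case $B\leq D_U\|f\|_m^2$ yields $A\geq\|f\|_m^2\bigl(D_U-\sqrt{D_U^2-\alpha_m(U)^2}\bigr)$, the second bound.

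The only subtlety is keeping track of which measure appears in the co-area identity $\int_0^\infty\mu(K_t)\,dt$ — it equals $\langle f,\deg(\cdot)f\rangle_m$ for $\mu=\eta$ (since $\eta=m\deg$) and $\|f\|_m^2$ for $\mu=m$ — and this is precisely what dictates bounding $\deg$ from below by $d_U$ in the first estimate and from above by $D_U$ in the second. Everything else is Cauchy-Schwarz, the co-area formula, and the quadratic formula.
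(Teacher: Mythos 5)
Your proof is correct. The paper itself does not prove Proposition~\ref{p:Keller-Lenz}; it is quoted directly from \cite{KL2} (Proposition~15), so there is no in-paper argument to compare against. Your derivation is the standard discrete Cheeger argument used in those references: reduce to $f\geq 0$, apply the co-area identity
\[
\sum_{x,y}\Er(x,y)|f^2(x)-f^2(y)|=2\int_0^\infty L(\partial K_t)\,dt,\qquad K_t=\{f^2>t\}\subset U,
\]
use Cauchy--Schwarz with $f^2(x)-f^2(y)=(f(x)-f(y))(f(x)+f(y))$ together with the identity $\sum_{x,y}\Er(x,y)(f(x)+f(y))^2=4\langle f,\deg(\cdot)f\rangle_m-2\mathcal Q(f,f)$, and then lower-bound $\int_0^\infty L(\partial K_t)\,dt$ by $\alpha_\mu(U)\,\mu(f^2)$ for $\mu\in\{\eta,m\}$ before solving the resulting quadratic inequality. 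The two choices of reference measure correctly produce the two bounds, and your observation that $B\mapsto B-\sqrt{B^2-c}$ is decreasing on $[\sqrt{c},\infty)$ (which is exactly the interval the discriminant constraint forces $B$ into) justifies substituting $B\leq D_U\|f\|_m^2$ in the second case. Everything checks; I only note that the discriminant condition $B^2\geq\alpha_m(U)^2\|f\|_m^4$ is what guarantees $D_U\geq\alpha_m(U)$, so the final square root is real — worth saying explicitly.
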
 
It remains to estimate the isoperimetric constant. We adapt
straightforwardly the proof of 
\cite{Woj}[Theorem 4.2.2], where the author considered the case
$w=\eta$.   
\begin{proposition}\label{p:Woj422}
Take a graph $\Gr:=(\Vr, \Er, w)$, $w:\Vr\to (0, \infty)$ and
$U\subset \Vr$. Suppose that there are a 1-dimensional decomposition of $\Gr$ and  $a>0$ such that
\[\eta_+(x)-\eta_-(x) \geq a w(x),\]
for all $x\in U$, then one obtains that $\alpha_{w}(U)\geq a$. 
\end{proposition}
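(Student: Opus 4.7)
The plan is to unpack the definition of the isoperimetric constant and exploit the symmetry of $\Er$ together with the 1-dimensional decomposition to turn the pointwise inequality $\eta_+(x)-\eta_-(x)\geq a w(x)$ into the desired inequality between the edge-boundary and the volume.

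First I would recall that for any finite $K\subset U$,
\[
L(\partial K)=\langle \bone_K,\Delta_m\bone_K\rangle_m
=\frac{1}{2}\sum_{x,y}\Er(x,y)|\bone_K(x)-\bone_K(y)|^2
=\sum_{x\in K,\, y\notin K}\Er(x,y),
\]
which confirms that $L(\partial K)$ is independent of $m$ and is simply the edge-boundary weight of $K$. Then by the assumption summed over $K$,
\[
a\, w(K) \;\leq\; \sum_{x\in K}\bigl(\eta_+(x)-\eta_-(x)\bigr).
\]

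Next, using the 1-dimensional decomposition, I would split each $\eta_\pm(x)$ according to whether the neighbour $y$ lies in $K$ or not:
\begin{align*}
\sum_{x\in K}\eta_+(x)&=\!\!\sum_{\substack{x\in K,\, y\in K\\ |y|=|x|+1}}\!\!\Er(x,y)+\!\!\sum_{\substack{x\in K,\, y\notin K\\ |y|=|x|+1}}\!\!\Er(x,y),\\
\sum_{x\in K}\eta_-(x)&=\!\!\sum_{\substack{x\in K,\, y\in K\\ |y|=|x|-1}}\!\!\Er(x,y)+\!\!\sum_{\substack{x\in K,\, y\notin K\\ |y|=|x|-1}}\!\!\Er(x,y).
\end{align*}
The symmetry $\Er(x,y)=\Er(y,x)$, together with swapping the dummy indices $x\leftrightarrow y$, shows that the two internal sums (the ones with $y\in K$) are equal, so they cancel when we subtract. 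Thus
\[
\sum_{x\in K}\bigl(\eta_+(x)-\eta_-(x)\bigr)=\!\!\sum_{\substack{x\in K,\, y\notin K\\ |y|=|x|+1}}\!\!\Er(x,y)-\!\!\sum_{\substack{x\in K,\, y\notin K\\ |y|=|x|-1}}\!\!\Er(x,y).
\]

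Finally, since the second sum is non-negative and the first sum is part of the full boundary sum, we have
\[
\sum_{x\in K}\bigl(\eta_+(x)-\eta_-(x)\bigr)\leq\!\!\sum_{\substack{x\in K,\, y\notin K\\ |y|=|x|+1}}\!\!\Er(x,y)\leq\sum_{x\in K,\, y\notin K}\Er(x,y)=L(\partial K).
\]
Combining with the earlier inequality yields $a\, w(K)\leq L(\partial K)$ for every finite $K\subset U$, hence $\alpha_w(U)\geq a$, as required. There is no real obstacle here: the only subtlety is the index-swap argument that makes the internal contributions cancel, and one must use the 1-dimensional decomposition hypothesis (no ``jumps'' by more than one level) so that $\eta_\pm$ really account for all neighbours.
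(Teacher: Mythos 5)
Your proof is correct and takes essentially the same route as the paper's: the paper applies Green's formula to the radial function $r(x)=|x|$ (so that $\tilde\Delta_w r(x)=\deg_-(x)-\deg_+(x)\leq -a$) and bounds $L(\partial K)\geq|\langle\bone_K,\Delta_w r\rangle_w|$ using $|r(x)-r(y)|\leq 1$, whereas you unpack that same cancellation by hand and then drop the nonpositive boundary term; the underlying computation is identical.
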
 
\proof Set $r(x):= |x|$. We have $\Delta_w r(x) \leq -a$ for $x\in U$. By the Green Formula and since
$r(x)-r(y)\in \{0, \pm 1\}$ for $x\sim 
y$,  we have: 
\begin{align*}
L(\partial K)&= \sum_{x\in K, y\sim x, y\notin K} \Er(x,y)\geq
\left|\sum_{x\in K, y\sim x, y\notin K} \Er(x,y)(r(x)-r(y))\right|
\\ &=
\left|\langle \bone_K ,\Delta_w r\rangle_w \right|\geq a w(K).
\end{align*}
This yields the result. \qed

\subsection{Lower estimates of eigenvalues} In the continuous setting, it is possible from the Hardy inequality and the Super-Poincar\'e
Inequality (see the Appendix) to obtain some estimates of the heat semigroup and 
 then to obtain some eigenvalues comparison.
 Here in this discrete setting, the situation is simpler since 
bounding from below the Laplacian by a non-negative multiplication
operator directly give information on eigenvalues.  
In all this section we denote by 
\[0\leq \lambda_1(\Delta_m) \leq \lambda_2(\Delta_m) \leq \dots \leq \lambda_n(\Delta_m) \leq
\dots <\inf \sigma_{\rm ess}(\Delta_m)
\]
the eigenvalues of $\Delta_m$ which are located below the infimum of
the essential spectrum of $\Delta_m$. A priori this number of
eigenvalues can  be finite.
 We recall some well-known results. We refer to \cite{RS}[Chapter XIII.1] and \cite{DS}
for more details and to \cite{RS}[Chapter XIII.15] for more
applications. We start with the form-version of the standard
variational characterization of the $n$-th eigenvalue.

\begin{theorem}\label{t:min-max}
 Let $A$ be a non-negative self-adjoint operator with form-domain
$\Qr(A)$. For all $n\geq 1$, we define:
\begin{align*}
\mu_n(A):=\sup_{\varphi_1, \ldots, \varphi_{n-1}}\inf_{\psi\in [\varphi_1,
  \ldots, \varphi_{n-1}]^\perp} \langle \psi,
A \psi \rangle, 
\end{align*}
where $[\varphi_1,  \ldots, \varphi_{n-1}]^\perp = \{\psi\in \Qr(A),$ so
that $ \|\psi\|=1$ and   $\langle \psi, \varphi_i\rangle=0, $ with $
i=1, \ldots, n\}$. Note that 
  $\varphi_i$ are not required to be linearly independent. 

We define also:
\begin{align*}
\nu_n(A)&:= \inf_{E_n \subset \Qr(A) , \dim E_n=n} \sup_{\psi \in E_n, \Vert \psi \Vert=1} \langle \psi ,A \psi \rangle. 
\end{align*}

Then, 
one has $\mu_n(A)=\nu_n(A)$ and
if $\mu_n(A)=\nu_n(A)$ is (strictly) below the essential spectrum of $A$, it is the
$n$-th eigenvalue, counted with multiplicity, $\lambda_n(A)$. Moreover, we have that:
\begin{align*}
\dim \ran\, \bone_{[0, \mu_n(A)]}(A)=n. 
\end{align*}

Otherwise, $\mu_n(A)=\nu_n(A)$ is the
  infimum of the essential spectrum. Moreover,
  $\mu_{j}(A)=\nu_j(A)=\mu_n(A)=\nu_n(A)$, for all $j\geq n$ and there  are at most $n-1$
  eigenvalues, counted with multiplicity, below the essential
  spectrum. In that case, 
\begin{align*}
\dim \ran\, \bone_{[0,  \mu_n(A)+
  \varepsilon]}(A)= +\infty, \mbox{ for all }\varepsilon>0.
\end{align*}
\end{theorem}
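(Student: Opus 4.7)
The plan is to reduce both quantities to a single spectral invariant. Writing $P_\lambda := \bone_{[0,\lambda]}(A)$ for the spectral projection of $A$, set
\[
\kappa_n(A) := \inf\{\lambda \geq 0 \,:\, \dim \ran P_\lambda \geq n\}.
\]
I will show $\mu_n(A) = \nu_n(A) = \kappa_n(A)$, and then extract the announced dichotomy by examining the spectral measure near this common value.

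The elementary inequality $\mu_n(A) \leq \nu_n(A)$ is pure dimension counting: fix $\varphi_1, \ldots, \varphi_{n-1}$ and an $n$-dimensional $E_n \subset \Qr(A)$; then $E_n \cap [\varphi_1, \ldots, \varphi_{n-1}]^\perp$ contains a unit vector, witnessing $\inf_{\psi \in [\varphi_i]^\perp}\langle \psi, A\psi\rangle \leq \sup_{\psi \in E_n}\langle \psi, A\psi\rangle$; taking $\sup$ over the $\varphi_i$ then $\inf$ over $E_n$ yields the claim.

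Next I sandwich $\kappa_n$ between $\nu_n$ from above and $\mu_n$ from below. For $\nu_n(A) \leq \kappa_n(A)$, given $\varepsilon > 0$ choose any $n$-dimensional $E_n \subset \ran P_{\kappa_n(A) + \varepsilon}$ (such an $E_n$ exists by definition of $\kappa_n$) and use functional calculus to bound $\langle \psi, A\psi\rangle \leq (\kappa_n(A) + \varepsilon)\|\psi\|^2$ on $E_n$; send $\varepsilon \to 0^+$. For $\kappa_n(A) \leq \mu_n(A)$, fix $\varepsilon > 0$, set $k := \dim \ran P_{\kappa_n(A) - \varepsilon} < n$, take $\varphi_1, \ldots, \varphi_k$ an orthonormal basis of this finite-dimensional range, and pad $\varphi_{k+1} = \cdots = \varphi_{n-1} = 0$ (legal precisely because the theorem allows the $\varphi_i$ to be non-independent). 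Every unit $\psi \in \Qr(A) \cap [\varphi_1, \ldots, \varphi_{n-1}]^\perp$ then belongs to $\ran \bone_{(\kappa_n(A) - \varepsilon, \infty)}(A)$, so $\langle \psi, A\psi\rangle \geq \kappa_n(A) - \varepsilon$; this inequality feeds into the definition of $\mu_n(A)$, and sending $\varepsilon \to 0^+$ closes the sandwich.

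Finally, I analyze $\kappa_n(A)$ against $\sigma_{\rm ess}(A)$. If $\kappa_n(A) < \inf \sigma_{\rm ess}(A)$, then $[0, \kappa_n(A)]$ contains only isolated eigenvalues of finite multiplicity, so the infimum defining $\kappa_n(A)$ is attained, coincides with the $n$-th eigenvalue $\lambda_n(A)$, and $\dim \ran P_{\kappa_n(A)} = n$. Otherwise, one must in fact have $\kappa_n(A) = \inf \sigma_{\rm ess}(A)$: a strict inequality $\kappa_n(A) > \inf \sigma_{\rm ess}(A)$ would place a point of $\sigma_{\rm ess}(A)$ in some $[0, \kappa_n(A) - \delta]$, forcing $\dim \ran P_{\kappa_n(A) - \delta} = \infty$ and contradicting the definition. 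Since $\sigma_{\rm ess}(A)$ is closed, $\kappa_n(A) \in \sigma_{\rm ess}(A)$, so $\dim \ran P_{\kappa_n(A) + \varepsilon} = \infty$ for every $\varepsilon > 0$; this simultaneously forces $\kappa_j(A) = \kappa_n(A)$ for all $j \geq n$ and rules out more than $n - 1$ eigenvalues below the essential spectrum. The one delicate bookkeeping step is the padding argument in $\kappa_n \leq \mu_n$, which is where the freedom to use non-independent $\varphi_i$ in the definition of $\mu_n$ becomes essential, and handles the low-lying degeneracies cleanly.
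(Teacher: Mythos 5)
The paper does not prove this statement: it is quoted as a classical result with a pointer to \cite{RS}[Chapter XIII.1], so there is no in-paper argument to compare against. Your proposal is a correct, self-contained proof along the standard textbook lines. The reduction of both $\mu_n$ and $\nu_n$ to the spectral counting quantity $\kappa_n(A)=\inf\{\lambda\geq 0:\dim\ran\,\bone_{[0,\lambda]}(A)\geq n\}$ is exactly the right organizing device: the dimension-counting inequality $\mu_n\leq\nu_n$, the two sandwich bounds via the spectral theorem, and the padding with $\varphi_{k+1}=\dots=\varphi_{n-1}=0$ (which is indeed why the statement allows dependent $\varphi_i$) are all sound, as is the closing dichotomy using the closedness of $\sigma_{\rm ess}(A)$ and the fact that spectral projections onto neighborhoods of essential-spectrum points are infinite dimensional.

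Two small caveats, neither of which is a gap in your reasoning so much as in the statement you were asked to prove. First, the claimed equality $\dim\ran\,\bone_{[0,\mu_n(A)]}(A)=n$ is false as stated when $\lambda_n(A)$ is degenerate with $\lambda_{n+1}(A)=\lambda_n(A)$ (take $A$ with a two-dimensional kernel and $n=1$); the correct conclusion, and the one your argument actually delivers, is $\dim\ran\,\bone_{[0,\mu_n(A)]}(A)\geq n$ together with $\dim\ran\,\bone_{[0,\mu_n(A)-\varepsilon]}(A)<n$. You reproduce the ``$=n$'' claim without deriving it, so it is worth flagging. Second, the nonemptiness of the constraint set $[\varphi_1,\dots,\varphi_{n-1}]^\perp\cap\Qr(A)$ implicitly uses that the Hilbert space is infinite dimensional (harmless here, since the paper only applies the theorem on $\ell^2(\Vr,m)$ with $\Vr$ infinite), but in a fully general statement the convention for an infimum over the empty set should be fixed.
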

 
This ensures the following useful criteria.

\begin{proposition}\label{p:compa}
Let $A,B$ be two self-adjoint operators, with form-domains
$\Qr(A)$ and $\Qr(B)$, respectively. Suppose that
\[\Qr(A)\supset \Qr(B) \mbox{ and } 0\leq \langle \psi, A\psi\rangle
\leq 
\langle \psi, B \psi\rangle, \]
for all $\psi \in \Qr(B)$. Then one has $\inf \sigma_{\rm ess} (A)\leq
\inf \sigma_{\rm ess} (B)$ and 
\begin{align}\label{e:N}
\Nr_\lambda(A) \geq \Nr_\lambda(B), 
\mbox{ for } \lambda \in [0,\infty)\setminus \{\inf \sigma_{\rm ess} (B)\},
\end{align}
where $\Nr_\lambda(A):=\dim \ran\, \bone_{[0, \lambda]}(A)$.  

In particular, if $A$ and $B$ have the same
form-domain, then $\sigma_{\rm ess} (A)=\emptyset$ if and only if
$\sigma_{\rm ess} (B)=\emptyset$ and  $\lambda_n(A) \leq \lambda_n(B)$, $n\geq 1$.
\end{proposition}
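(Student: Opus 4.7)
The plan is to derive all three assertions from the min-max characterization in Theorem \ref{t:min-max}, so the main task reduces to comparing the sequences $\mu_n(A)=\nu_n(A)$ and $\mu_n(B)=\nu_n(B)$. The key step is to establish the inequality $\mu_n(A)\leq\mu_n(B)$ for every $n\geq 1$. This is immediate from the $\nu_n$ formulation: since $\Qr(B)\subset \Qr(A)$, every $n$-dimensional subspace $E_n$ of $\Qr(B)$ is also an $n$-dimensional subspace of $\Qr(A)$, and on any such $E_n$ the assumption $\langle \psi, A\psi\rangle\leq\langle\psi,B\psi\rangle$ gives
\[
\sup_{\psi\in E_n,\,\|\psi\|=1}\langle \psi, A\psi\rangle \leq \sup_{\psi\in E_n,\,\|\psi\|=1}\langle \psi, B\psi\rangle.
\]
Taking the infimum over $E_n\subset\Qr(B)$ yields $\nu_n(A)\leq\nu_n(B)$.

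Armed with $\mu_n(A)\leq\mu_n(B)$, the comparison of $\inf\sigma_{\rm ess}$ follows automatically: by Theorem \ref{t:min-max} the sequence $(\mu_n)$ is non-decreasing and stabilizes at $\inf\sigma_{\rm ess}$ (with the convention $+\infty$ if the essential spectrum is empty), hence $\inf\sigma_{\rm ess}(A)=\lim_n\mu_n(A)\leq\lim_n\mu_n(B)=\inf\sigma_{\rm ess}(B)$. For the counting function $\Nr_\lambda$, if $\lambda<\inf\sigma_{\rm ess}(B)$ and $\Nr_\lambda(B)=n$, then Theorem \ref{t:min-max} gives $\mu_n(B)\leq\lambda$, hence $\mu_n(A)\leq\lambda$, and the same theorem forces $\Nr_\lambda(A)\geq n$; when $\lambda>\inf\sigma_{\rm ess}(B)$, the first comparison already gives $\lambda>\inf\sigma_{\rm ess}(A)$, and both counting functions equal $+\infty$.

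For the final assertion, the inequality $\lambda_n(A)\leq\lambda_n(B)$ for eigenvalues located below the essential spectrum follows directly from $\mu_n(A)\leq\mu_n(B)$ and Theorem \ref{t:min-max}. The implication $\sigma_{\rm ess}(A)=\emptyset\Rightarrow\sigma_{\rm ess}(B)=\emptyset$ is already contained in the first part, but the converse is the subtle point and is where I expect the main obstacle: the bare inequality $A\leq B$ provides no upper bound of $B$ in terms of $A$. The plan is to exploit the assumption $\Qr(A)=\Qr(B)$ through the bounded inverse theorem. The form norms $\|\psi\|_A^2:=\|\psi\|^2+\langle\psi,A\psi\rangle$ and $\|\psi\|_B^2:=\|\psi\|^2+\langle\psi,B\psi\rangle$ both turn $\Qr:=\Qr(A)=\Qr(B)$ into Hilbert spaces (closedness of the associated quadratic forms), and $A\leq B$ makes the identity map $(\Qr,\|\cdot\|_B)\to(\Qr,\|\cdot\|_A)$ continuous with norm $\leq 1$. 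The bounded inverse theorem then produces $C>0$ with $\|\cdot\|_B\leq C\|\cdot\|_A$, i.e., $B\leq C^2A+(C^2-1)$ as quadratic forms on $\Qr$, whence $\mu_n(B)\leq C^2\mu_n(A)+(C^2-1)$ for all $n$. Consequently $\mu_n(B)\to\infty$ forces $\mu_n(A)\to\infty$, which closes the equivalence.
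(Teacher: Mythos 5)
Your proof is correct and follows the same route as the paper: derive $\mu_n(A)\le\mu_n(B)$ from the min--max characterization, read off the essential-spectrum and counting statements from Theorem \ref{t:min-max}, and for the equal-form-domain case obtain a reverse form inequality $B\le aA+b$ by a Baire-category argument (the paper calls it the ``uniform boundedness principle,'' you more accurately call it the bounded inverse/closed graph theorem) and then apply the first part twice. The only differences are presentational: you spell out the $\nu_n$ comparison and the stabilization of $\mu_n$ at $\inf\sigma_{\rm ess}$, which the paper leaves implicit.
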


\proof It is enough to notice that $\mu_n(A)\leq \mu_n(B)$, for all
$n\geq 0$. Theorem \ref{t:min-max} permits us to conclude for the
first part. Supposing now they have the same form-domain, by the uniform
boundedness principle, there are $a,b>0$ such that:
\[\langle \psi, A\psi\rangle \leq a \langle \psi, B \psi\rangle +
b\|\psi\|^2 \mbox{ and } \langle \psi, B\psi\rangle \leq a \langle
\psi, A \psi\rangle + b\|\psi\|^2\]
for all $\psi \in \Qr(A)=\Qr(B)$. By using  the previous
statement twice we get the result.  \qed

We start with a direct application. We shall present examples in the
next section.

\begin{corollary}\label{cor-vp-minoration-min-max}
Let $\psi$ be a non-decreasing non-negative radial function on $\Vr$. 
Assume that
\begin{equation*}
\langle f,\Delta_m f \rangle_m \geq \langle f, \psi f \rangle_m,
\end{equation*}
for all $f\in \Cc_c(\Vr)$. Then, 
\[\inf \sigma_{\rm ess}(\Delta_m) \geq \lim_{|x|\to \infty} \psi(x)\] 
and when $\lambda_{|B_{n-1}|+k}(\Delta_m)$ exists, we have:
\begin{equation}\label{vp-minoration}
\lambda_{|B_{n-1}|+k}(\Delta_m) \geq \psi(n), \quad  \textrm{ for } k=1,\dots
,|S_n|. 
\end{equation}
\end{corollary}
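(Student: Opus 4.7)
The plan is to apply Proposition \ref{p:compa} with $A=\psi(\cdot)$ the operator of multiplication by $\psi$ on $\ell^2(\Vr,m)$ and $B=\Delta_m$. The hypothesis directly gives $\langle f, \psi f\rangle_m \leq \langle f, \Delta_m f\rangle_m$ for $f\in\Cc_c(\Vr)$, and a standard approximation argument (take $f_n\in\Cc_c(\Vr)$ converging to $f\in\Qr(\Delta_m)$ in the form norm and apply Fatou's lemma to the left-hand side) extends this inequality to all of $\Qr(\Delta_m)$, which simultaneously shows $\Qr(\Delta_m)\subseteq \Qr(\psi(\cdot))$. This verifies the hypotheses of Proposition \ref{p:compa}.

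Next I would read off the spectral data of the diagonal operator $\psi(\cdot)$. Since $\psi$ is radial and non-decreasing, the limit $\psi_\infty:=\lim_{|x|\to\infty}\psi(x)\in[0,+\infty]$ exists. Because each $S_n$ is finite, for every $\lambda<\psi_\infty$ the set $\{x:\psi(x)\leq \lambda\}$ is finite, hence $\bone_{[0,\lambda]}(\psi(\cdot))$ has finite rank; this gives $\inf\sigma_{\rm ess}(\psi(\cdot))=\psi_\infty$. Proposition \ref{p:compa} then yields
\[
\inf\sigma_{\rm ess}(\Delta_m)\geq \inf\sigma_{\rm ess}(\psi(\cdot))=\lim_{|x|\to\infty}\psi(x),
\]
which is the first assertion.

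For the eigenvalue bound, the same proposition gives $\Nr_\lambda(\Delta_m)\leq \Nr_\lambda(\psi(\cdot))$ for all $\lambda\in[0,\infty)\setminus\{\inf\sigma_{\rm ess}(\Delta_m)\}$. Since $\psi$ is non-decreasing, for any $\lambda<\psi(n)$ one has $\{x:\psi(x)\leq\lambda\}\subseteq B_{n-1}$, so $\Nr_\lambda(\psi(\cdot))\leq |B_{n-1}|$. Moreover, for such $\lambda$ one has $\lambda<\psi(n)\leq\psi_\infty\leq\inf\sigma_{\rm ess}(\Delta_m)$, so the excluded value of Proposition \ref{p:compa} is not in play. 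Consequently $\Nr_\lambda(\Delta_m)\leq |B_{n-1}|$ for every $\lambda<\psi(n)$, which by Theorem \ref{t:min-max} translates into $\lambda_{|B_{n-1}|+1}(\Delta_m)\geq \psi(n)$ whenever this eigenvalue exists below $\sigma_{\rm ess}$. Monotonicity of the $\lambda_j(\Delta_m)$'s then promotes the inequality to all $k\in\{1,\ldots,|S_n|\}$, yielding \eqref{vp-minoration}.

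The proof is essentially bookkeeping once the right comparison operator is chosen. The only slightly delicate point is the form-domain inclusion $\Qr(\Delta_m)\subseteq \Qr(\psi(\cdot))$ needed to invoke Proposition \ref{p:compa}, but this is immediate from the Fatou approximation described above; everything else reduces to the diagonal nature of $\psi(\cdot)$ together with the finiteness of the spheres $S_n$.
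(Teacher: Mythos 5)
Your proof is correct and follows exactly the route the paper has in mind: the corollary is introduced as "a direct application" of Proposition \ref{p:compa}, and you apply it with $A=\psi(\cdot)$ and $B=\Delta_m$, read off the spectral data of the diagonal operator $\psi(\cdot)$ from the finiteness of the spheres, and translate $\Nr_\lambda$ bounds into eigenvalue bounds via Theorem \ref{t:min-max}. You also correctly fill in the one small step the paper leaves implicit, namely extending the form inequality from $\Cc_c(\Vr)$ to $\Qr(\Delta_m)$ (giving the domain inclusion $\Qr(\Delta_m)\subseteq\Qr(\psi(\cdot))$) via the Fatou argument along a form-norm approximating sequence.
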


\subsection{Upper estimates of eigenvalues}

It is also possible to obtain some upper bounds for the eigenvalues. 
Our method here is based on the following well-known Proposition, see
\cite{Wan3}[Proposition 5.1]  for example. 

\begin{proposition}
Let $\Gr:=(\Vr,\Er,m)$ be a graph and let $\Delta_m$ be the associated Laplacian.
 Let $g_1, \ldots, g_n \in \Dc(\Delta_m^{1/2})$ be $n$ orthonormal
 functions ($\langle g_i,g_j\rangle_m= \delta_{ij})$). 
Let $\lambda_n (M_g)$ be the largest eigenvalue of the symmetric
 matrix:  
\[
M_g:=\left({\langle g_i,\Delta_m g_j \rangle_m} \right)_ {1\leq i,j \leq n}. 
\]
Then if $\lambda_n(\Delta_m)$ exists we have:
\begin{equation}\label{Rayleigh-g-matrice}
\lambda_n(\Delta_m) \leq \lambda_n(M_g).
\end{equation}
 In particular, for all non identically zero functions $g_i$,
 $i=1,\dots ,n$ such that  
\begin{equation}\label{cond-Rayleigh}
\langle g_i, g_j \rangle_m =\langle g_i,\Delta_m g_j \rangle_m =0
\textrm{ for }i\neq j, 
\end{equation}
\begin{equation}\label{Rayleigh-g}
\lambda_n(\Delta_m) \leq \max_{i=1,\dots,n} \frac{\langle g_i,\Delta_m g_i
  \rangle_m}{\langle g_i, g_i \rangle_m}. 
\end{equation}
Moreover if $\lambda_n(M_g)<\inf \sigma_{\rm ess}(\Delta_m)$ then
$\lambda_n(\Delta_m)$ exists. 
\end{proposition}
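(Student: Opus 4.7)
The plan is to deduce the bound from the min–max characterization $\nu_n(\Delta_m)$ stated in Theorem \ref{t:min-max}, by using the span of $g_1,\ldots,g_n$ as the test subspace. First I would verify that the $n$ orthonormal functions $g_1,\ldots,g_n$ generate an $n$-dimensional subspace $E_n$ of the form-domain $\Qr(\Delta_m)=\Dc(\Delta_m^{1/2})$. Then, for any $\psi=\sum_{i=1}^n c_i g_i \in E_n$ with $\|\psi\|_m=1$, orthonormality gives $\sum_i|c_i|^2=1$ and
\[
\langle \psi,\Delta_m\psi\rangle_m=\sum_{i,j}\overline{c_i}c_j\langle g_i,\Delta_m g_j\rangle_m = \langle c, M_g c\rangle_{\C^n}\leq \lambda_n(M_g),
\]
where the last inequality uses that $M_g$ is Hermitian with largest eigenvalue $\lambda_n(M_g)$. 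Taking the supremum over $\psi\in E_n$ with $\|\psi\|_m=1$ and then using the infimum over all $n$-dimensional subspaces in the definition of $\nu_n(\Delta_m)$ yields $\nu_n(\Delta_m)\leq \lambda_n(M_g)$.

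Next, Theorem \ref{t:min-max} identifies $\nu_n(\Delta_m)=\mu_n(\Delta_m)$ with $\lambda_n(\Delta_m)$ as soon as this common value lies strictly below $\inf\sigma_{\rm ess}(\Delta_m)$. This immediately gives \eqref{Rayleigh-g-matrice} when $\lambda_n(\Delta_m)$ exists, and simultaneously settles the final assertion: if $\lambda_n(M_g)<\inf\sigma_{\rm ess}(\Delta_m)$, then $\mu_n(\Delta_m)\leq\lambda_n(M_g)<\inf\sigma_{\rm ess}(\Delta_m)$, so $\mu_n(\Delta_m)$ is by Theorem \ref{t:min-max} the $n$-th eigenvalue $\lambda_n(\Delta_m)$.

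For \eqref{Rayleigh-g}, I would reduce to the orthonormal case by setting $\tilde g_i:=g_i/\|g_i\|_m$. The assumptions \eqref{cond-Rayleigh} make the $\tilde g_i$ orthonormal and force $M_{\tilde g}$ to be diagonal, with diagonal entries $\langle g_i,\Delta_m g_i\rangle_m/\langle g_i,g_i\rangle_m$. Its largest eigenvalue is then the maximum appearing on the right-hand side of \eqref{Rayleigh-g}, and applying \eqref{Rayleigh-g-matrice} to the family $(\tilde g_i)_{i=1,\ldots,n}$ concludes.

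There is no serious obstacle here; the only mild point to handle carefully is that the $g_i$ belong only to the form-domain $\Dc(\Delta_m^{1/2})$ (not necessarily to $\Dc(\Delta_m)$), so the quantities $\langle g_i,\Delta_m g_j\rangle_m$ must be interpreted via the quadratic form $\Qr(g_i,g_j)$; this is exactly the setting of the min–max principle recalled in Theorem \ref{t:min-max}, so the argument goes through verbatim.
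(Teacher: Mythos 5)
Your argument is correct and is essentially the paper's own proof: both use the span of $g_1,\ldots,g_n$ as the test subspace in the $\nu_n$ form of the min--max principle of Theorem \ref{t:min-max}, identify $\sup_{\psi\in E_n,\|\psi\|=1}\langle\psi,\Delta_m\psi\rangle$ with $\lambda_n(M_g)$, and then invoke the dichotomy of that theorem for both \eqref{Rayleigh-g-matrice} and the existence claim. The normalization reducing \eqref{Rayleigh-g} to the diagonal case is the intended reading of the paper's ``in particular'' as well.
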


\begin{proof}
This is a direct consequence of  Theorem \ref{t:min-max}. One  just
has  to note that for $E_n$ a subspace of dimension $n$ of  $
\Dc(\Delta_m)$ and $(g_1,\dots, g_n)$ an orthonormal basis of $E_n$,
one has: $\max_{h\in E_n, \Vert h \Vert=1} \langle h,\Delta_m h
\rangle=\lambda_n(M_g)$. 
\end{proof}

As a corollary, we obtain:

\begin{corollary}\label{cor-dist-support}
 \begin{enumerate}
 \item Let $g_1, \ldots , g_n \in \Dc(\Delta_m^{1/2})$ be
   such that  $d_\Gr(\supp\, 
   g_i, \supp\,  g_j)\geq 2$, for $i\neq 
   j$ and where $\supp$ denotes the support.  Then if
   $\lambda_n(\Delta_m)$ exists we have:
\[
\lambda_n(\Delta_m) \leq \max_{1=1,\dots,n} \frac{\langle g_i,\Delta_m g_i
  \rangle_m}{\langle g_i, g_i \rangle_m}. 
\]
\item Let $\Gr_i:=(\Vr_i,\Er_i,m)$,  $i=1,\dots ,n$  be $n$ induced
  sub-graphs of $\Gr$ such that for  $i\neq j$, $d_\Gr(\Vr_i,\Vr_j)\geq
  2$, then  if
   $\lambda_n(\Delta_{\Gr,m})$ exists we have:
\[
\lambda_n(\Delta_{\Gr,m}) \leq  \max_{i=1,\dots,n} \left\{ \inf \sigma
  \left(\Delta_{\Gr,m}^{\Gr_i}\right) \right\} 
\]
where $\Delta_{\Gr,m}^{\Gr_i}$ denotes the Dirichlet Laplacian of
$\Gr_i$ in $\Gr$. 
\end{enumerate}
\end{corollary}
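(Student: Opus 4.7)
The plan is to deduce both parts from the preceding proposition. For part~(1), I would verify the two orthogonality conditions \eqref{cond-Rayleigh} under the hypothesis $d_\Gr(\supp g_i, \supp g_j)\geq 2$ for $i\neq j$, and then apply \eqref{Rayleigh-g} directly. That $\langle g_i, g_j\rangle_m = 0$ is immediate from disjointness of supports. For $\langle g_i, \Delta_m g_j\rangle_m$ I would work with the symmetric bilinear-form representation
\[
\Qr(g_i,g_j) = \tfrac{1}{2}\sum_{x\sim y}\Er(x,y)\,\overline{(g_i(x)-g_i(y))}\,(g_j(x)-g_j(y));
\]
a nonzero summand requires an edge $\{x,y\}$ meeting both supports, which would force $d_\Gr(\supp g_i, \supp g_j)\leq 1$, contradicting the hypothesis. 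This gives \eqref{cond-Rayleigh} and hence the conclusion.

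For part~(2), I would construct suitable test functions $g_i$ from the Dirichlet Laplacians on the $\Gr_i$ and then invoke part~(1). Fix $\varepsilon>0$. By the definition of $\inf\sigma(\Delta_{\Gr,m}^{\Gr_i})$ and the density of $\Cc_c(\Vr_i)$ in the form domain of $\Delta_{\Gr,m}^{\Gr_i}$, I may choose $g_i\in\Cc_c(\Vr_i)$ with $\|g_i\|_m=1$ and $\langle g_i,\Delta_{\Gr,m}^{\Gr_i} g_i\rangle_m\leq \inf\sigma(\Delta_{\Gr,m}^{\Gr_i})+\varepsilon$. Extending each $g_i$ by zero to $\tilde g_i$ on $\Vr$, the supports stay in the respective $\Vr_i$ and are therefore pairwise at distance $\geq 2$. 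Applying part~(1) to $\tilde g_1,\ldots,\tilde g_n$ gives $\lambda_n(\Delta_{\Gr,m})\leq \max_i \langle\tilde g_i,\Delta_m\tilde g_i\rangle_m$, and letting $\varepsilon\to 0$ yields the desired bound, provided the key identity $\langle \tilde g_i,\Delta_m \tilde g_i\rangle_m=\langle g_i,\Delta_{\Gr,m}^{\Gr_i} g_i\rangle_m$ is established.

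The only genuine verification, and the main (though minor) obstacle, is that last identity. It requires unfolding Section~\ref{s:2}'s definition of the Dirichlet Laplacian: the boundary contributions $\Er(x,y)|g_i(x)|^2$ with $x\in\Vr_i$, $y\notin\Vr_i$ that appear in the ambient form $\Qr_\Gr(\tilde g_i,\tilde g_i)$ coincide exactly with the potential-like term $\sum_{x\in\Vr_i}\bigl(\sum_{y\notin\Vr_i}\Er(x,y)\bigr)|g_i(x)|^2$ produced by imposing Dirichlet conditions on $\Vr_i$. Once this is checked, the corollary follows at once.
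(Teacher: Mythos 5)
Your proof is correct and follows the natural route the paper intends: verify the orthogonality conditions \eqref{cond-Rayleigh} via the bilinear form, apply \eqref{Rayleigh-g} for part~(1), and then plug in near-optimal Dirichlet test functions (extended by zero) for part~(2). The only remark I would make is that the identity you single out at the end, $\langle\tilde g_i,\Delta_m\tilde g_i\rangle_m=\langle g_i,\Delta_{\Gr,m}^{\Gr_i}g_i\rangle_m$, is even more immediate than your form-level bookkeeping suggests: the paper defines $\Delta_\Gr^\Ur f(x):=\Delta_\Gr\tilde f(x)$ pointwise on $\Ur$, so $\langle g_i,\Delta_{\Gr,m}^{\Gr_i}g_i\rangle_m=\sum_{x\in\Vr_i}\overline{g_i(x)}\,\Delta_m\tilde g_i(x)\,m(x)=\sum_{x\in\Vr}\overline{\tilde g_i(x)}\,\Delta_m\tilde g_i(x)\,m(x)=\langle\tilde g_i,\Delta_m\tilde g_i\rangle_m$, with the middle step using only that $\tilde g_i$ vanishes off $\Vr_i$. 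Your boundary-term comparison is consistent with this but not needed.
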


\subsection{The approach with super-harmonic functions}
In this section we improve a result of
\cite{Woj} and prove that
the weighted Laplacian $\Delta_m$ has empty essential spectrum
for a certain class of graph and give some estimation on the
eigenvalues.  

\begin{theorem}\label{t:lyap1} Take $\Gr:=(\Er, \Vr, m)$ and assume there is a $1$-dimensional
decomposition and a constant $c>1$ such that
\beq\label{cond-lyap-sl}
l:=\liminf_{|x|\to  \infty} \left(\deg_+(x) -  c\,  \deg_-(x)\right)>0
\eeq
Set $n_0:= \inf\{n\in \N, \, \deg_+(x) -  c\,  \deg_-(x)\geq 0$ with
$|x|\geq n \}$. Then there exists a super-harmonic function $W$ such that   
\begin{align}\label{e:phic0}
\tilde \Delta_m W (x)\geq \phi_c W(x), \quad \mbox{ for all } x\in \Vr,
\end{align}
with 
\begin{align}\label{e:phic}
\phi_c(x):= \frac{c-1}{c} (\deg_+(x)-c \, \deg_-(x))
\bone_{B_{n_0}^c}\geq 0.
\end{align}
In particular, we obtain that $\sigma_{\rm ess} ( \Delta_m)\geq
l(c-1)/c$ and $\sigma_{\rm ess} ( \Delta_m)=\emptyset$ if $l=\infty$.
\end{theorem}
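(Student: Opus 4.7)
The guiding idea is to exhibit an explicit radial super-harmonic function $W$, apply the Hardy inequality (Proposition \ref{prop:hardy}) to turn the super-harmonic estimate into a quadratic form estimate, and then conclude about the essential spectrum via Persson's Lemma.

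\emph{Construction of $W$.} I would try the decreasing radial ansatz $W(x) := c^{-\max(|x|,\, n_0)}$, so that $W$ is constant equal to $c^{-n_0}$ on $B_{n_0}$ and equal to $c^{-|x|}$ for $|x|\geq n_0$. Since $W$ is radial, the algebraic Laplacian reads
\[
\tilde\Delta_m W(x) = \deg_-(x)\bigl(W(x)-w(|x|-1)\bigr) + \deg_+(x)\bigl(W(x)-w(|x|+1)\bigr),
\]
where $w(n)$ denotes the common value of $W$ on $S_n$.

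\emph{Verification of the super-harmonic inequality.} A direct case analysis on $n=|x|$ is the main computation:
\begin{itemize}
\item[(i)] For $n\leq n_0-1$, all three values $w(n-1), w(n), w(n+1)$ equal $c^{-n_0}$, so $\tilde\Delta_m W(x)=0 = \phi_c(x) W(x)$.
\item[(ii)] For $n=n_0$ one has $w(n-1)=w(n)=c^{-n_0}$ and $w(n+1)=c^{-(n_0+1)}$, giving $\tilde\Delta_m W(x) = \frac{c-1}{c}\deg_+(x)\, W(x) \geq 0 = \phi_c(x)W(x)$ (since $x\in B_{n_0}$).
\item[(iii)] For $n\geq n_0+1$, $w(n-1)=cW(x)$ and $w(n+1)=W(x)/c$, so
\[
\tilde\Delta_m W(x) = \tfrac{c-1}{c}\bigl(\deg_+(x) - c\deg_-(x)\bigr)W(x) = \phi_c(x)W(x),
\]
using that the indicator $\mathbf{1}_{B_{n_0}^c}$ equals $1$ here (and that this quantity is non-negative by the definition of $n_0$).
\end{itemize}
In all cases $\tilde\Delta_m W(x)\geq \phi_c(x) W(x)$, which is precisely \eqref{e:phic0}.

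\emph{From super-harmonic function to spectrum.} Since $W>0$, Proposition \ref{prop:hardy} yields for every $f\in\Cc_c(\Vr)$
\[
\langle f,\Delta_m f\rangle_m \;\geq\; \Big\langle f,\tfrac{\tilde\Delta_m W}{W}\, f\Big\rangle_m \;\geq\; \langle f,\phi_c f\rangle_m.
\]
By hypothesis, $\liminf_{|x|\to\infty}\phi_c(x)\geq \frac{c-1}{c}\, l$. Given any $\varepsilon>0$, choose a finite set $\Kr\subset\Vr$ outside which $\phi_c \geq \frac{c-1}{c}l - \varepsilon$; then for $f\in\Cc_c(\Vr\setminus\Kr)$ the inequality above gives $\langle f,\Delta_m f\rangle_m \geq \bigl(\tfrac{c-1}{c}l-\varepsilon\bigr)\|f\|_m^2$. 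Persson's Lemma \eqref{e:Persson} then delivers $\inf\sigma_{\rm ess}(\Delta_m)\geq \frac{c-1}{c}l-\varepsilon$; letting $\varepsilon\to 0$ yields the announced lower bound, and the case $l=+\infty$ follows by the same argument with any arbitrarily large constant $M$ in place of $\frac{c-1}{c}l-\varepsilon$, forcing $\sigma_{\rm ess}(\Delta_m)=\emptyset$.

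\emph{Main obstacle.} The non-trivial step is picking the correct ansatz: the naive choice $W(x)=c^{-|x|}$ gives exactly $\phi_c W$ on all of $\Vr$, but the right-hand side can be \emph{negative} inside $B_{n_0}$ (there is no control on $\deg_+-c\deg_-$ there), which would contradict the super-harmonic inequality. The flattening to a constant on $B_{n_0}$ is what repairs this, at the cost of a harmless positive ``surplus'' term on the sphere $S_{n_0}$ which is absorbed because $\phi_c$ vanishes on $B_{n_0}$.
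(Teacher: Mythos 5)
Your construction is identical to the paper's: the flattened radial function $W(x)=c^{-\max(|x|,n_0)}$ is exactly what the authors use in \eqref{e:goodW}, and your three-case verification of \eqref{e:phic0} makes explicit what the paper asserts without detail. The only (cosmetic) divergence is at the end: the paper invokes Corollary \ref{cor-lyap-spec} (proved via the Super-Poincar\'e machinery), whereas you go directly through the Hardy inequality \eqref{Hardy} and Persson's characterization \eqref{e:Persson}; the paper itself notes in Remark \ref{r:A7} that these two routes are interchangeable here. One small slip in your closing remark: the naive $\tilde W(x)=c^{-|x|}$ yields $\tilde\Delta_m\tilde W=\psi_c\tilde W$ with $\psi_c(x)=\frac{c-1}{c}(\deg_+(x)-c\deg_-(x))$ \emph{without} the indicator $\bone_{B_{n_0}^c}$ (so $\psi_c$, not $\phi_c$, can go negative on $B_{n_0}$); $\phi_c$ is by definition non-negative everywhere.
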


\begin{proof}
We construct a suitable Lyapunov function. Set $\tilde W(x):=c^{-|x|}$.
We have:
\begin{align}\label{e:psic}
\psi_c(x):= \frac{\tilde \Delta_m \tilde W (x)}{\tilde W(x)}  = \deg_+(x)
 \left(1-\frac{1}{c}\right)  + \deg_-(x) \left(1-c\right). 
\end{align}
Now since $\deg_+(x)- c\, \deg_-(x)$  is positive outside a given ball $B_{n_0}$,
$\tilde W$ is a  Lyapunov function, which satisfies
\[
\tilde\Delta_m \tilde W (x)\geq \phi_c (x) \tilde W(x) - C \bone_{ B_{n_0}} 
\] 
with $\phi_c$ defined as in \eqref{e:phic} and for some constant $C$.  

Set now 
\begin{align}\label{e:goodW}
 W(x)= \left\{ \begin{array}{cl}
    {c^{-n_0}},& \textrm{ if } x\in B_{n_0} \\ 
    {c^{-|x|}},& \textrm{ if } x\in B_{n_0}^c,  \end{array}
 \right.
\end{align}  
then it satisfies \eqref{e:phic0}. Finally, since
$\phi_c(x) $ tends to $l(c-1)/c$ when $|x|\to 
\infty$, Corollary \ref{cor-lyap-spec} gives the statement about the
essential spectrum. \end{proof}

\begin{remark}
Note that condition \eqref{cond-lyap-sl} with $l=+\infty$ is equivalent
to the following one: 
$\eta_+ (x) \to \infty \textrm{  as } |x|\to \infty$  and there exist
a ball $B_{n_0}$ and a constant $c'>0$ such that for all  $x$\ outside
the ball $B_{n_0}$,  
\[ 
 \frac{\deg_+(x) - \deg_-(x)}{ \deg_-(x)} \geq c'>0.
\]
Thus, when $m=1$, this is better than  the one of \cite{Woj}[Theorem
4.2.2] which asserts:  $\eta_{+} (x) \to \infty \textrm{  as }
|x|\to \infty$  and  there exist  a ball $B_{n_0}$ and a constant
$c>0$ such that for all  $x$\ outside the ball $B_{n_0}$,   
\[
 \frac{\eta_+(x) - \eta_-(x)}{ \eta(x)} \geq c>0.
\]
His result follows for instance by Propositions
\ref{p:Keller-Lenz}, \ref{p:Woj422} and the Persson Lemma
\eqref{e:Persson}. 
\end{remark}
An example where our criterion is satisfied and the one of
Wojciechowski is not satisfied is the following. 

\begin{example}
 Let $\Gr:=(\Vr,\Er,1)$ be the simple graph  with weight $m\equiv 1$, whose set of vertices  is 
\[
\Vr:=\{ (1,i_1,i_2,\dots ,i_k), k\geq 0, i_j\in \lint 1,  j\rint\mbox{ for } j\in \lint
1, k\rint\}
\]
and   where $\Er(x,y):=1$  if and only if $x\neq y$ and 
\[
\{x,y\} =\{(1,i_1,\dots ,i_k) ,(1,i_1,\dots ,i_k,i_{k+1}) \}\]
or
$x=(1,i_1,\dots ,i_k) \textrm{ and } y=(1,i'_1,\dots ,i'_k)$.

For  $x=(1,i_1,\dots ,i_k)\in \Vr, k\geq1$, we have
$\eta_+(x)= k+1$, $\eta_-(x)=1$ and $\eta(x)= k+ 2 + k!-1$.  
\end{example} 

\renewcommand\a{1}
\renewcommand\b{1}
\renewcommand\c{2}
\begin{figure}
\begin{tikzpicture}
\draw[color=orange , ultra thick](3,1)--(-3,1);
\draw[color=orange , ultra thick](-3.5,2)--(3.5,2);
\draw[color=orange , ultra thick](-3.5-1/6,3)--(3.5+1/6,3);
\fill[color=black](0,0)circle(.7mm);
\fill[color=black](0,-1)circle(.7mm);
\draw (0,-1)--(0,0);
\foreach \x in {{-\a},\a} {
\draw (0,0)--(3*\x,1);
\fill[color=black](3*\x,1)circle(.7mm);
\foreach \y in {{-\b},...,\b} {
\fill[color=black](3*\x+\y/2,2)circle(.5mm);
\draw (3*\x,1)--(3*\x+\y/2,2);
\foreach \z in {-2, -1, 1, 2} {
\fill[color=black](3*\x+\y/2+\z/15,3)circle(.3mm);
\draw (3*\x+\y/2,2)--(3*\x+\y/2+\z/15,3);
}
}
}
\path(-5, -1) node {$S_0$};
\path(-5, 0) node {$S_1$};
\path(-5, 1) node {$S_2$};
\path(-5, 2) node {$S_3$};
\path(-5, 3) node {$S_4$};
\end{tikzpicture}
\caption{{Growing tree with complete graph on spheres}}\label{f:ex10}
\end{figure}
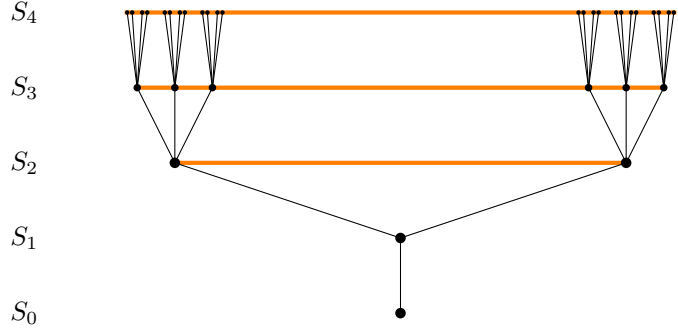

We provide an example of  a weakly spherically symmetric graph. On  a weakly spherically symmetric graph, $\inf \sigma_{\rm ess}(\Delta_1)$  does not depend on the edges inside the spheres $S_n$ (see Corollary \ref{cor-spectrum-wss}). Therefore, it is a good point that our criterion \ref{cond-lyap-sl} does not depend on $\deg_0$.

Theorem \ref{t:lyap1} can also be useful to compute the asymptotics of
eigenvalues. We improve partially  the main result of
\cite{Go2} where one considered some perturbation of weighted trees.

\begin{theorem}\label{t:lyap2}
Take $\Gr:=(\Er, \Vr, m)$ and assume there is a $1$-dimensional
decomposition such that 
\begin{align}\label{e:asymp-lyap0}
\lim_{|x|\to \infty}\deg_+(x)= \infty, \quad \mbox{ and } \quad 
\max(\deg_-(x), \deg_0(x))= o(\deg_+(x)),
\end{align}
as $|x|\to \infty$, then $\Dc(\Delta_m^{1/2})=\Dc(\deg^{1/2}(\cdot))$, 
$\sigma_{\rm ess} ( \Delta_m)=\emptyset$,  and 
\begin{equation}\label{e:asymp-lyap1}
\lim_{n \to \infty} \frac{\lambda_n(\Delta_m)}{\lambda_n(\deg(\cdot))}=1.
\end{equation}
\end{theorem}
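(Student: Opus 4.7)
The strategy will be to sandwich the quadratic form of $\Delta_m$ between $(1\pm\varepsilon)$ times the form of the multiplication operator $\deg(\cdot)$, up to a bounded remainder; the Upside-Down Lemma and the min-max principle will then deliver the essential spectrum statement, the form-domain identification, and the Weyl asymptotics.

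First, I would apply Theorem \ref{t:lyap1}. The hypothesis $\deg_-(x)=o(\deg_+(x))$ together with $\deg_+(x)\to\infty$ yields $\deg_+(x)-c\deg_-(x)\to+\infty$ for every $c>1$, so condition \eqref{cond-lyap-sl} holds with $l=+\infty$. For each $c>1$, Theorem \ref{t:lyap1} then produces a positive super-harmonic function $W_c$ for which the Hardy inequality gives
\begin{equation}\label{lyap2-lb0}
\langle f,\Delta_m f\rangle_m \geq \langle f,\phi_c f\rangle_m,\qquad f\in\Cc_c(\Vr),
\end{equation}
with $\phi_c(x)=\tfrac{c-1}{c}(\deg_+(x)-c\deg_-(x))\bone_{B_{n_0}^c}(x)$. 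Since $\deg(x)=\deg_+(x)(1+o(1))$ under the hypotheses, one has $\phi_c(x)/\deg(x)\to (c-1)/c$ as $|x|\to\infty$. Given $\varepsilon\in(0,1)$, I would fix $c=c(\varepsilon)$ with $(c-1)/c\geq 1-\varepsilon/2$, so that $\phi_c(x)\geq(1-\varepsilon)\deg(x)$ outside a finite set; absorbing the finite-set error into a constant $C_\varepsilon$ yields
\begin{equation}\label{lyap2-lb}
\langle f,\Delta_m f\rangle_m \geq (1-\varepsilon)\langle f,\deg(\cdot) f\rangle_m - C_\varepsilon\|f\|_m^2,\qquad f\in\Cc_c(\Vr).
\end{equation}

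Second, combined with the trivial upper bound \eqref{e:majo}, estimate \eqref{lyap2-lb} shows that the quadratic forms of $\Delta_m$ and $\deg(\cdot)$ are equivalent modulo bounded terms on $\Cc_c(\Vr)$, so after closure $\Dc(\Delta_m^{1/2})=\Dc(\deg^{1/2}(\cdot))$. Since $\deg_+(x)\to\infty$ forces $\deg(x)\to\infty$, the multiplication operator $\deg(\cdot)$ has empty essential spectrum, and Proposition \ref{p:compa} (after shifting by $C_\varepsilon$ to preserve non-negativity) transfers this property to $\Delta_m$. To upgrade \eqref{lyap2-lb} to a matching upper bound I would invoke the Upside-Down Lemma \ref{l:upsidedown1} with $q\equiv 0$, $\Ur=\Vr$, $a=\varepsilon$, $k=C_\varepsilon$; this converts \eqref{lyap2-lb} into
\begin{equation}\label{lyap2-ub}
\langle f,\Delta_m f\rangle_m \leq (1+\varepsilon)\langle f,\deg(\cdot) f\rangle_m + C_\varepsilon\|f\|_m^2,\qquad f\in\Cc_c(\Vr).
\end{equation}

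Finally, from \eqref{lyap2-lb}, \eqref{lyap2-ub} and the min-max characterization of Theorem \ref{t:min-max}, one obtains for every $n\geq 1$
\[
(1-\varepsilon)\lambda_n(\deg(\cdot)) - C_\varepsilon \;\leq\; \lambda_n(\Delta_m) \;\leq\; (1+\varepsilon)\lambda_n(\deg(\cdot)) + C_\varepsilon.
\]
Dividing by $\lambda_n(\deg(\cdot))\to\infty$ and letting $\varepsilon\downarrow 0$ will prove \eqref{e:asymp-lyap1}. The main obstacle is really the first step: one must exploit the freedom to take $c$ arbitrarily large in Theorem \ref{t:lyap1} in order to reach the sharp constant $1$ rather than any strictly smaller one, which relies crucially on $\deg_0$ (and not only $\deg_-$) being negligible compared to $\deg_+$. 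Once the two-sided form comparison is secured, the Upside-Down Lemma together with the min-max principle reduce the rest to bookkeeping.
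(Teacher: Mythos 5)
Your proposal is correct and follows essentially the same route as the paper: apply Theorem \ref{t:lyap1} with $c$ arbitrarily close to the sharp constant, combine the resulting Hardy lower bound with \eqref{e:majo} to identify the form domains, upgrade to a matching upper bound via the Upside-Down Lemma \ref{l:upsidedown1}, and conclude by min-max. The only (harmless) divergence is that you deduce the emptiness of the essential spectrum by comparison with $\deg(\cdot)$ through Proposition \ref{p:compa}, whereas the paper reads it off directly from Theorem \ref{t:lyap1} since $l=\infty$.
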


\begin{proof}
We apply Theorem \ref{t:lyap1}. Note first that $l=\infty$ for all
$c>1$. The essential spectrum of $\Delta_m$ is therefore empty. 
Using \eqref{e:phic}, \eqref{Hardy} and \eqref{e:asymp-lyap0} we obtain that for all
$\varepsilon>0$ there are $c_\varepsilon, c_\varepsilon'>0$ such that:
\begin{equation}\label{e:sparse}
  \langle f, \Delta_m f\rangle \geq (1-\varepsilon)\langle f,
\deg(\cdot)f\rangle_m - c_\varepsilon \|f\|^2_m,
\end{equation}
for all $f\in \Cc_c(\Vr)$. Combined with \eqref{e:majo}, we get the
equality of the form domains. Using Lemma \ref{l:upsidedown1} we derive:
\begin{align*}
\langle f, \Delta_m f\rangle  \leq (1+\varepsilon)\langle f,
\deg(\cdot)f\rangle_m +  c_\varepsilon  \|f\|^2_m,
\end{align*}
for all $f\in \Cc_c(\Vr)$. This yields: 
\[ 1- \varepsilon \leq  \liminf_{n \to
  \infty}\frac{\lambda_n(\Delta_m)}{\lambda_n(\deg(\cdot))}
\leq \limsup_{n \to
  \infty}\frac{\lambda_n(\Delta_m)}{\lambda_n(\deg(\cdot))}\leq 1+
\varepsilon.\]  
By letting $\varepsilon$ go to zero we obtain the Weyl asymptotic
\eqref{e:asymp-lyap1} for $\Delta_{m}$. 
\end{proof}

\begin{remark}  Inequalities $\eqref{e:sparse}$ was studied in full
  detail in \cite{BGK}. It turns out that the graphs which satisfy
  $\eqref{e:sparse}$ are exactly the so-called \emph{almost sparse
    graphs} (see  the definition in \cite{BGK}). Combining Proposition
  \ref{p:Woj422} and \cite[Theorem 5.5]{BGK} we can also reprove
  Theorem \ref{t:lyap2}. 
\end{remark}

With the same method, we also obtain a result when the inner and
  outer degrees are bounded.   
 
\begin{theorem}\label{thm-spectrum-bd}
Take $\Gr:=(\Er, \Vr, m)$ and assume there is a $1$-dimensional
decomposition such that there exist $n_0\in \N$  and two constants $a$
and $D$ with 
\begin{equation}\label{cond-lyap-lin}
  \deg_+(x) - \deg_-(x) \geq a    \textrm{ for all } x \in B_{n_0}^c
\end{equation}
and 
\begin{equation}\label{cond-lyap-lin2}
\sup_{x\in B_{n_0}^c}  \deg_+(x) + \deg_-(x) \leq D < +\infty.
\end{equation}
Then there exists a positive function $W$ such that  
\begin{align*}
\tilde \Delta_m W (x)\geq \phi_c W(x), \quad \mbox{ for all } x\in \Vr,
\end{align*}
with 
\begin{align*}
\phi_c(x):= \left(D - \sqrt{D^2-a^2}\right) \bone_{B_{n_0}^c}\geq 0.
\end{align*}
In particular,  $\inf
\sigma\left(\Delta_m^{B_n^c}\right) 
\geq D - \sqrt{D^2-a^2}$, for all $n\geq n_0$ and 
\begin{equation}
 \inf \sigma_{\rm ess}(\Delta_m) \geq D - \sqrt{D^2-a^2}. 
\end{equation}
\end{theorem}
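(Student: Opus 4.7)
The plan mirrors the proof of Theorem \ref{t:lyap1}: test a radial ansatz $\tilde W(x) = r^{-|x|}$ with $r > 1$, optimize $r$ under the prescribed constraints, then glue a constant piece on $B_{n_0}$ before feeding the resulting function into Hardy's inequality and Persson's Lemma.

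First, since $\tilde W$ is constant on each $S_n$, one has
\[
\frac{\tilde \Delta_m \tilde W(x)}{\tilde W(x)} = \deg_+(x)\left(1-\frac{1}{r}\right) - \deg_-(x)(r-1),
\]
with no contribution from $\deg_0$. For $x \in B_{n_0}^c$ the constraints $\deg_+(x) - \deg_-(x) \geq a$ and $\deg_+(x) + \deg_-(x) \leq D$ (forcing in particular $a \leq D$) show that the right-hand side is increasing in $\deg_+$ and decreasing in $\deg_-$; hence its infimum on the feasibility region is attained when both constraints are tight, yielding
\[
\frac{\tilde \Delta_m \tilde W(x)}{\tilde W(x)} \geq \frac{(r-1)\bigl[2a - (D-a)(r-1)\bigr]}{2r}.
\]
Next, I would optimize over $r > 1$. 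Setting $t = r-1$ and differentiating produces the quadratic $(D-a)t^2 + 2(D-a)t - 2a = 0$, whose positive root gives $r_* = \sqrt{D^2-a^2}/(D-a)$; a short algebraic simplification shows that the resulting value of the lower bound equals exactly $D - \sqrt{D^2-a^2}$. (The degenerate case $a = D$, in which $\deg_- \equiv 0$ outside $B_{n_0}$, is handled by letting $r \to \infty$ and applying the construction below to each $r$ separately.)

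Fix $r = r_*$ and define
\[
W(x) := \begin{cases} r_*^{-n_0}, & x \in B_{n_0}, \\ r_*^{-|x|}, & x \in B_{n_0}^c. \end{cases}
\]
A case analysis delivers the claimed super-harmonic inequality with $\phi_c(x) = (D-\sqrt{D^2-a^2})\mathbf{1}_{B_{n_0}^c}(x)$: if $|x| < n_0$ every neighbor lies in $B_{n_0}$ so $\tilde \Delta_m W(x) = 0$; if $|x| = n_0$ only neighbors in $S_{n_0+1}$ contribute and $W$ decreases there, giving $\tilde \Delta_m W(x) \geq 0$; if $|x| \geq n_0+1$ all neighbor values of $W$ coincide with those of $\tilde W$ (including neighbors in $S_{n_0}$, where $W = r_*^{-n_0} = \tilde W$), so the estimate from the previous paragraph gives $\tilde \Delta_m W(x) \geq (D - \sqrt{D^2-a^2}) W(x)$.

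Finally, applying Proposition \ref{prop:hardy} with this $W$ yields $\langle f, \Delta_m f\rangle_m \geq (D - \sqrt{D^2-a^2})\|f\|_m^2$ for every $f \in \Cc_c(B_n^c)$ with $n \geq n_0$, which gives $\inf \sigma(\Delta_m^{B_n^c}) \geq D - \sqrt{D^2-a^2}$; Persson's Lemma in \eqref{e:Persson} then yields the bound on $\sigma_{\rm ess}(\Delta_m)$. The main technical step is the constrained optimization: it is precisely what upgrades the naive lower bound produced by the ansatz $\tilde W = r^{-|x|}$ to the sharp Keller--Lenz-type constant $D - \sqrt{D^2-a^2}$, consistent with Proposition \ref{p:Keller-Lenz} applied to the isoperimetric estimate of Proposition \ref{p:Woj422}.
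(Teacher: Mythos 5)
Your proof is correct and follows essentially the same route as the paper: the ansatz $c^{-|x|}$, the optimal choice $c=\sqrt{(D+a)/(D-a)}$ (your $r_*=\sqrt{D^2-a^2}/(D-a)$ is the same number), and the gluing of a constant on $B_{n_0}$; your explicit constrained optimization is just a reorganization of the paper's algebraic rewrite of $\psi_c$ in terms of $\deg_+\pm\deg_-$. The only cosmetic difference is that you conclude via the Hardy inequality and Persson's Lemma directly rather than via Corollary \ref{cor-lyap-spec}, which the paper itself notes (Remark \ref{r:A7}) is an equivalent path.
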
 
 
 \begin{proof}
Let $\tilde W(x)=c^{-|x|}$ for some $c>1$ which will be precised later.
Take $\psi_c$ as in \eqref{e:psic}. For $x\in
B_{n_0}^c$,  conditions (\ref{cond-lyap-lin}) and
(\ref{cond-lyap-lin2}) imply that:
\begin{align*}
\psi_c(x) &=  \frac{1}{2} \left[ \left(c-\frac{1}{c}\right)
  (\deg_+(x)-\deg_-(x))- \left(c+\frac{1}{c}-2\right) (\deg_+(x)
  +\deg_-(x)) \right]\\ 
          &\geq \frac{1}{2} \left[ \left(c-\frac{1}{c}\right) a-
            \left(c+\frac{1}{c}-2\right) D \right]= \frac{1}{2} \left[
            2D  -c (D-a) - \frac{1}{c}(D+a)\right]
\\
&\geq  D - \sqrt{D^2-a^2},
\end{align*}
by taking $c= \sqrt \frac{D+a}{D-a}$. Then by choosing $W$ as in
\eqref{e:goodW}, Corollary \ref{cor-lyap-spec}  ends the proof. 
 \end{proof}

An  example where our criterion is satisfied and Proposition
\ref{p:Keller-Lenz} does not apply is the following:  

\begin{example}
Given $d\geq 2$, let $\Gr:=(\Vr,\Er,1)$ be the simple graph 
given by the $d$-ary tree with the complete graph
on each sphere and with weight $m\equiv1$, see Figure
\ref{f:ex14}. The graph is constructed as 
follows. The set of vertices  is  
\[\Vr:=\{ (1,i_1,i_2,\dots ,i_k), k \geq 1,
 i_j\in \lint 1, d\rint\mbox{ and } j\in \lint
1, k\rint\}\] 
and $\Er(x,y):=1$  if and only if $x\neq  
y$ and  
\[
\{x,y\} =\{(1,i_1,\dots ,i_k) ,(1,i_1,\dots ,i_k,i_{k+1}) \}
\]  
or 
\[
x=(1,i_1,\dots ,i_k) \textrm{ and } y=(1,i'_1,\dots ,i'_k).
\] 
We have $\# S_k=d^{k}$, for $k\geq 0$. Moreover, for $x=(i_0,\dots
,i_k)\in \Vr, k\geq 1$, we have $\deg_+(x)=\eta_+(x)= d$,
$\deg_-(x)=\eta_-(x)=1$, and $\deg (x)=\eta(x)= \#S_k+d$. By Theorem
\ref{thm-spectrum-bd}, we get:
\[
\inf \sigma(\Delta_1) \geq d+1-2\sqrt{d},
\]
whereas the lower bound given by \eqref{e:Keller-Lenz} is
$0$. Indeed, for $U$ the complement of a ball, by considering $K={S_n}$ for
$n$ large enough, one sees that $\alpha_\eta(U)=0$. Note also that the second part of 
Proposition \ref{p:Keller-Lenz} does not apply since $D_U=\infty$. 
\end{example}

\renewcommand\a{1}
\renewcommand\b{1}
\renewcommand\c{1}
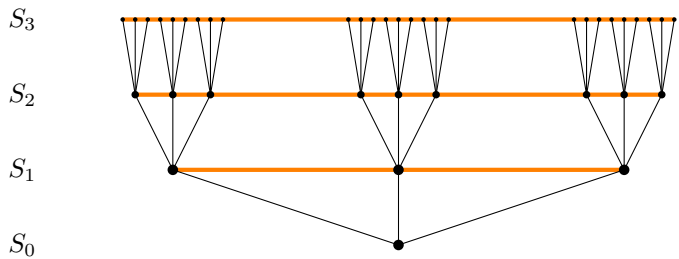
\begin{figure}
\begin{tikzpicture}
\draw[color=orange , ultra thick](3,1)--(-3,1);
\draw[color=orange , ultra thick](-3.5,2)--(3.5,2);
\draw[color=orange , ultra thick](-3.5-1/6,3)--(3.5+1/6,3);
\fill[color=black](0,0)circle(.7mm);
\foreach \x in {{-\a},...,\a} {
\draw (0,0)--(3*\x,1);
\fill[color=black](3*\x,1)circle(.7mm);
\foreach \y in {{-\b},...,\b} {
\fill[color=black](3*\x+\y/2,2)circle(.5mm);
\draw (3*\x,1)--(3*\x+\y/2,2);
\foreach \z in {{-\c},...,\c} {
\fill[color=black](3*\x+\y/2+\z/6,3)circle(.3mm);
\draw (3*\x+\y/2,2)--(3*\x+\y/2+\z/6,3);
}
}
}
\path(-5, 0) node {$S_0$};
\path(-5, 1) node {$S_1$};
\path(-5, 2) node {$S_2$};
\path(-5, 3) node {$S_3$};
\end{tikzpicture}
\caption{{$3$-ary tree with complete graphs on spheres}}\label{f:ex14}
\end{figure}

Actually, the result: $\inf \sigma(\Delta_1) = d+1-2\sqrt{d}$ was already known for the above example. Since it  is a weakly spherically symmetric graph,  
 by Corollary 6.7 in \cite{KLW},  the quantity $\inf \sigma(\Delta_1)$ does not depend on the edges inside the spheres $S_n$.  Therefore one can reduce to the case of the ordinary d-ary tree. 
\begin{remark}\label{rq-W2}
 In the case of the \emph{normalized} Laplacian, $m(x)=\eta(x)=\sum_y
 \Er(x,y)$, that is  $\deg\equiv 1$, we bring some new light to 
 \cite[Corollary 16]{KL2} (which improves the original  result of
 \cite{DK}) : 
\[ 
 \inf \sigma_{\rm ess} (\Delta_\eta) \geq 1-\sqrt{1-a^2} % \geq \frac{a^2}{4}.
\]
For the $d$-ary tree, we  also recover the sharp estimate:
\[
   \inf \sigma(\Delta_\eta) \geq  
   1-\frac{2 \sqrt{d}}{d+1} .
\]
\end{remark}

\subsection{Rapidly branching graphs}
We now discuss the result of Fujiwara and Higushi (see \cite[p
196]{Fu}) concerning rapidly branching graphs. In
\cite[Corollary 4]{Fu} under the hypothesis that $\sigma_{\rm
   ess}(\Delta_\eta)=\{1\}$, the author proves the existence of an
 infinite  sequence of eigenvalues $\lambda_i\neq 1$ that converges to 
$1$. Fujiwara asks if there exist two sequences of eigenvalues that tends
respectively to $1^-$ and  to $1^+$. The answer is yes:

\begin{proposition}\label{p:Fuj}
 Let $\Gr:=(\Er, \Vr)$ be a graph such that $\sigma_{\rm
   ess}(\Delta_\eta)=\{1\}$. Then there exist two infinite sequences of
 eigenvalues $(\lambda_n^+)_{n\in \N}$ and $(\lambda_n^-)_{n\in \N}$ such that
 $\lambda_n^+ >1$ and $\lambda_n^-<1$ for all $n\in \N$. 
\end{proposition}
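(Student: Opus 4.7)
The proof is by contradiction, combining the Upside-Down Lemma with Fujiwara's cited result. Assume, aiming at a contradiction, that only finitely many eigenvalues of $\Delta_\eta$ lie in $[0,1)$; the opposite case (finitely many in $(1,2]$) is handled by an analogous argument after a suitable reduction. By Fujiwara's theorem, infinitely many eigenvalues of $\Delta_\eta$ are distinct from $1$, so under our assumption all but finitely many lie in $(1,2]$. Let $(f_n)_{n\in\N}$ be a real orthonormal family of eigenfunctions of $\Delta_\eta$ with eigenvalues $\mu_n>1$ and $\mu_n\to 1^+$.

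Apply Lemma \ref{l:upsidedown1} to the normalized Laplacian ($m=\eta$, so that $\deg\equiv 1$) with $q\equiv 0$ and $U=\Vr$ to deduce the basic inequality
\[
\langle f, \Delta_\eta f\rangle_\eta + \langle |f|, \Delta_\eta |f|\rangle_\eta \le 2\|f\|_\eta^2
\]
for every real $f\in\Cc_c(\Vr)$. Set $V_n:=\operatorname{span}(f_1,\dots,f_n)$. For each unit-norm $g\in V_n$, the relation $\langle g,\Delta_\eta g\rangle_\eta\ge\mu_n>1$ yields $\langle |g|,\Delta_\eta |g|\rangle_\eta\le 2-\mu_n<1$; thus the nonlinear cone $\Kr_n:=\{|g|:g\in V_n\}$ consists of nonnegative functions whose Rayleigh quotient for $\Delta_\eta$ is strictly below~$1$.

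The crucial remaining step is to upgrade this pointwise bound into the existence of an $n$-dimensional linear subspace of $\Dc(\Delta_\eta^{1/2})$ on which the quadratic form of $\Delta_\eta$ is bounded by $2-\mu_n$: the min-max principle (Theorem \ref{t:min-max}) then produces $n$ eigenvalues of $\Delta_\eta$ strictly below~$1$, and letting $n\to\infty$ contradicts the assumed finiteness. This extraction is the main obstacle, since $g\mapsto |g|$ is nonlinear and the bound on individual $|g|$'s does not automatically transfer to linear combinations. Two natural routes to overcome it are (i) a Ljusternik--Schnirelmann / Krasnoselskii genus argument applied to the $\pm$-symmetric set $\Kr_n\cup(-\Kr_n)$, whose $\mathbb{Z}/2$-index equals~$n$, and (ii) a direct argument based on the sign-pattern partition of $\Vr$ induced by $f_1,\dots,f_n$: on each sign cell the map $g\mapsto |g|$ is linear, which gives $\dim\operatorname{span}\Kr_n=n$ and permits cell-by-cell control of the quadratic form of $\Delta_\eta$. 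Either route yields the required subspace, closing the contradiction and producing the infinite sequence $(\lambda_n^-)$; the existence of $(\lambda_n^+)$ follows by the same scheme carried out under the opposite assumption.
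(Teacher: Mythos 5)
Your proof is incomplete at precisely the step you identify as the main obstacle. Having shown that every unit-norm $g \in V_n$ satisfies $\langle|g|,\Delta_\eta|g|\rangle_\eta \leq 2-\mu_n <1$, you need an $n$-dimensional \emph{linear} subspace of $\Dc(\Delta_\eta^{1/2})$ on which the quadratic form stays uniformly below $1$, but the cone $\Kr_n=\{|g|:g\in V_n\}$ is not a subspace, and neither of your two ``natural routes'' is actually carried out. Route (ii) does not hold up as stated: the sign pattern of $g=\sum_i c_i f_i$ depends on the coefficients $(c_1,\dots,c_n)$, so there is no fixed finite partition of $\Vr$ on whose cells $g\mapsto|g|$ is linear for all $g\in V_n$ simultaneously; as $(c_i)$ varies over $\R^n$ one may encounter infinitely many distinct sign patterns on the infinite vertex set $\Vr$. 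Moreover, even if $\dim\operatorname{span}\Kr_n=n$ held, the Rayleigh-quotient bound holds only on the cone $\Kr_n$ itself, not on linear combinations of its elements, which is what min-max requires. Route (i) via Krasnoselskii genus is merely asserted, and it is a much heavier piece of machinery than the problem warrants. As written, the contradiction is not obtained, so the proposal does not yet prove the statement.

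The paper's proof is short and sidesteps the nonlinearity entirely. It uses the explicit two-point test function $g_\alpha=\delta_{x_0}+\alpha\,\delta_{y_0}$ for an edge $x_0\sim y_0$, for which
\[
\frac{\langle g_\alpha,\Delta_\eta g_\alpha\rangle}{\langle g_\alpha,g_\alpha\rangle_\eta}
=1-\mathrm{sign}(\alpha)\,\frac{\Er(x_0,y_0)}{\sqrt{\eta(x_0)\eta(y_0)}},
\qquad \alpha=\pm\sqrt{\eta(x_0)/\eta(y_0)}.
\]
Choosing $\alpha>0$ gives Rayleigh quotient strictly below $1$, choosing $\alpha<0$ gives one strictly above $1$. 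Taking, for any $n$, a family of $n$ edges with pairwise-$\geq 2$-distant supports (always possible in an infinite locally finite graph), Corollary~\ref{cor-dist-support} applied to $\Delta_\eta$ yields $n$ eigenvalues strictly below $1$, and applied to $2-\Delta_\eta$ yields $n$ eigenvalues strictly above $1$. Since $\inf\sigma_{\rm ess}(\Delta_\eta)=1$, the min-max theorem guarantees these eigenvalues exist, and as $n$ is arbitrary both infinite sequences follow without any contradiction argument, any appeal to Fujiwara's prior result, or any manipulation of absolute values.
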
 

 We refer to \cite{Fu} for the question of $1$ being an eigenvalue in the
case of a radial tree.

\proof Given
$x_0\sim y_0$ set $g_\alpha=\delta_{x_0}+\alpha \delta_{y_0}$. We have
\[ \frac{\langle g_\alpha, \Delta_\eta g_\alpha\rangle}{\langle
  g_\alpha, g_\alpha\rangle_\eta}=1-\mbox{sign}(\alpha) \frac{\Er(x_0,
  y_0)}{\sqrt{\eta(x_0)\eta(y_0)}},\]
where $\alpha$ is chosen to be $\pm \sqrt{\eta(x_0)/\eta(y_0)}$.  
The result follows from Corollary \ref{cor-dist-support} (applied to
$\pm \Delta_\eta$). 
 \qed

\begin{remark} Note that $\sigma_{\rm    ess}(\Delta_\eta)=\{1\}$
implies that that $(\lambda_n^+)_{n\in \N}$
  and $(\lambda_n^-)_{n\in \N}$ tend to $1$ by definition of the
  essential spectrum. Moreover our choice of test-functions and
  Corollary \ref{cor-dist-support} also imply that:  
\[\lim_{|x|\to \infty} \inf_{y\sim x} \frac{\eta(x)\eta(y)}{\Er^2(x,
  y)}=+\infty,\]
where $|x|$ is defined with respect to any choice of $1$-dimensional
decomposition. We point out that with the help of Corollary
\ref{cor-ess-1} it is easy to construct a simple graph such that
$\sigma_{\rm   ess}(\Delta_\eta)=\{1\}$ and  such that
$\liminf_{|x|\to \infty} 
\eta(x)<+\infty$, see Figure \ref{f:counterex}. 
\end{remark} 

\renewcommand\a{1}
\renewcommand\b{2}
\renewcommand\c{3}
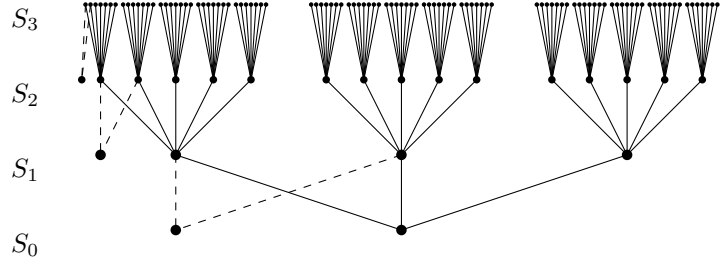
\begin{figure}
\begin{tikzpicture}
\fill[color=black](0,0)circle(.7mm);
\foreach \x in {{-\a},...,\a} {
\draw (0,0)--(3*\x,1);
\fill[color=black](3*\x,1)circle(.7mm);
\foreach \y in {{-\b},...,\b} {
\fill[color=black](3*\x+\y/2,2)circle(.5mm);
\draw (3*\x,1)--(3*\x+\y/2,2);
\foreach \z in {{-\c},...,\c} {
\fill[color=black](3*\x+\y/2+\z/15,3)circle(.3mm);
\draw (3*\x+\y/2,2)--(3*\x+\y/2+\z/15,3);
}
}
}
\draw[dashed](-3,1)--(-3,0);
\draw[dashed](0,1)--(-3,0);
\fill[color=black](-3,0)circle(.7mm);
\draw[dashed](-4,2)--(-4,1);
\draw[dashed](-3.5,2)--(-4,1);
\fill[color=black](-4,1)circle(.7mm);
\draw[dashed](-3-1-3/15,3)--(-4.25,2);
\draw[dashed](-3-1-2/15,3)--(-4.25,2);
\fill[color=black](-4.25,2)circle(.5mm);
\path(-5, 0-0.2) node {$S_0$};
\path(-5, 1-0.2) node {$S_1$};
\path(-5, 2-0.2) node {$S_2$};
\path(-5, 3-0.2) node {$S_3$};
\end{tikzpicture}
\caption{{Graph with $\sigma_{\rm   ess}(\Delta_\eta)=\{1\}$ and  
$\liminf_{|x|\to \infty} 
\eta(x)<+\infty$.}}\label{f:counterex}
\end{figure}

In the setting of simple graphs, the main result of  \cite{Fu} is the
equivalence between an isoperimetry at infinty and the fact that
$\sigma_{\rm    ess}(\Delta_\eta)=\{1\}$. We give a sufficient
condition for the latter.

\begin{corollary}\label{cor-ess-1}
Take $\Gr:=(\Er, \Vr)$ be a graph and assume there is a $1$-dimensional
decomposition such that : 
\begin{equation}\label{cond-ess-1}
p_+(x)\to 1, \quad \textrm{ as }|x| \to \infty,
\end{equation}
then $\sigma_{\rm ess} (\Delta_\eta)= \{1\}$.
\end{corollary}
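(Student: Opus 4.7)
The plan is to sandwich $\sigma_{\rm ess}(\Delta_\eta)$ between $1$ and $1$ using Theorem \ref{t:lyap1} and Proposition \ref{p:upsidedowness}, and then to rule out the empty case by a general operator-theoretic argument.

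First, I would use that $\Delta_\eta$ corresponds to the choice $m=\eta$, for which $\deg\equiv 1$ and hence $p_+(x)+p_0(x)+p_-(x)=1$ for every $x\in\Vr$. The hypothesis $p_+(x)\to 1$ therefore forces $p_-(x)\to 0$, so for every $c>1$,
\[
l:=\liminf_{|x|\to\infty}\bigl(\deg_+(x)-c\,\deg_-(x)\bigr)=\liminf_{|x|\to\infty}\bigl(p_+(x)-c\,p_-(x)\bigr)=1>0.
\]
Applying Theorem \ref{t:lyap1} gives $\inf\sigma_{\rm ess}(\Delta_\eta)\geq l(c-1)/c=(c-1)/c$; letting $c\to\infty$ yields $\inf\sigma_{\rm ess}(\Delta_\eta)\geq 1$.

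Next, for every $a>0$ the trivial inequality $\inf\sigma_{\rm ess}(\Delta_\eta)\geq 1\geq 1-a$ makes Proposition \ref{p:upsidedowness} applicable, so that $\sup\sigma_{\rm ess}(\Delta_\eta)\leq 1+a$. Sending $a\to 0^+$ gives $\sup\sigma_{\rm ess}(\Delta_\eta)\leq 1$, hence $\sigma_{\rm ess}(\Delta_\eta)\subseteq\{1\}$.

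It remains to check that the essential spectrum is nonempty. Since $m=\eta$ makes $\deg\equiv 1$, inequality \eqref{e:majo} yields $0\leq\Delta_\eta\leq 2\,\Id$, so $\Delta_\eta$ is a bounded self-adjoint operator on the infinite-dimensional Hilbert space $\ell^2(\Vr,\eta)$; any such operator must have nonempty essential spectrum, since otherwise its spectrum would consist of finitely many isolated eigenvalues of finite multiplicity in the compact interval $[0,2]$, contradicting $\dim\ell^2(\Vr,\eta)=\infty$. Combining with the previous step gives $\sigma_{\rm ess}(\Delta_\eta)=\{1\}$. The only step that is not a one-line invocation of earlier results is this nonemptiness argument, which nonetheless is pure operator theory rather than something specific to graphs.
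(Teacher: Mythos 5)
Your proof is correct and reaches the same endpoint, but it differs from the paper's in two small ways. The paper first notes that for $m=\eta$ we have $\deg_++\deg_-\leq 1$, so Theorem \ref{thm-spectrum-bd} with $D=1$ and $a=1-\ve$ directly gives $\inf\sigma(\Delta_\eta^{B_n^c})\geq 1-\sqrt{1-(1-\ve)^2}$, and then it calls Proposition \ref{p:upsidedowness}. You instead apply Theorem \ref{t:lyap1} for each fixed $c>1$ and then let $c\to\infty$; this is a valid alternative since both theorems are built on the same Lyapunov function $c^{-|x|}$, but the paper's route optimizes over $c$ in one stroke rather than taking a limit of a family. The more substantive improvement in your argument is that you explicitly rule out $\sigma_{\rm ess}(\Delta_\eta)=\emptyset$: since $\inf\emptyset=+\infty$ and $\sup\emptyset=-\infty$, the inequalities $\inf\sigma_{\rm ess}\geq 1$ and $\sup\sigma_{\rm ess}\leq 1$ alone only give $\sigma_{\rm ess}\subseteq\{1\}$, and the paper leaves the nonemptiness implicit. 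Your operator-theoretic justification (a bounded self-adjoint operator on an infinite-dimensional Hilbert space cannot have empty essential spectrum) is correct and worth making explicit, with boundedness of $\Delta_\eta$ coming from $\deg\equiv 1$ and \eqref{e:majo}.
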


\begin{proof}
 Let $\ve>0$. Since $p_+(x)\to1$  as $|x| \to \infty$, there exists a
 ball $n_\ve$ such that for all $n\geq n_\ve$, 
\[
p_+(x)-p_-(x)\geq 1-\ve \textrm{ for } x\in B_{n}^c.
\]
Thus, by Theorem \ref{thm-spectrum-bd}, we obtain $\inf \sigma
\left(\Delta_\eta^{B_n^c}\right)\geq 1 -\sqrt \ve$ and
Proposition \ref{p:upsidedowness} concludes.\end{proof}

\section{Eigenvalues Comparison}\label{s:5}
\subsection{The case of trees}
We turn to the case of a tree $\Tr:=(\Vr,\Er)$. First we fix $v\in \Vr$
and set $S_0:=\{v\}$ and $S_n$ given by \eqref{e:1ddSn}. 
Let $x\in \Vr$, we denote by $ \Tr_{x}'$ the induced tree in $\Tr$ whose
set of vertices is $\Vr_{\Tr_{x}'}= \cup_{k\geq 0} S_{k,x}$, see
\eqref{e:skx}. This corresponds to the sub-tree of $\Tr$ whose root is $x$.  
We also consider $\Tr_{x}$  the induced tree in  $\Tr$ whose set of
vertices is $\Vr_{\Tr_{x}'}= \cup_{k\geq -1} S_{k,x}$.  We denote by
$\parent x$ the unique point in $S_{-1,x}$, it is the father of $x$.

 Let $f \in \Cc_c(\Vr_{\Tr_{x}})$ such that $f(\parent x)=0$, we can
 extend $f$ in  a function $\tilde f $ on the all tree $\Tr$ by
 setting   $\tilde f (y)=0$ for all $y \in \Vr-\Vr_{n}$ and we have: 
\[
\Delta_{\Tr_{x},m}^{\Tr_{x}'} f (z) =\Delta_{\Tr,m} \tilde f (z)
\textrm{ for } z\in \Vr_{\Tr_{x}'}.
\]
If $\Tr$ is a radial tree, for all $x$ which belongs in a same sphere
$S_n$, all the sub-trees $\Tr_{x}$ (and $\Tr_{x} '$) are the same. We
denote by $\Tr_n$ one of them (and $\Tr_n'$ respectively). 

Corollary \ref{cor-dist-support} yields:
\begin{theorem}\label{thm-tree-vp-majoration}
Let $\Tr$ be a  tree. 
Then, with the above notations: 
\begin{equation}\label{vp-majoration}
\lambda_{\#S_n}(\Delta_m) \leq   \max_{x \in S_n} \inf \sigma
\left(\Delta_{\Tr_{x},m}^{\Tr_{x}'}\right). 
\end{equation} 
If moreover  $\Tr$ is a radial tree, then 
\begin{equation}\label{vp-majoration-radial}
\lambda_{\#S_n}(\Delta_m) \leq  \inf \sigma \left(\Delta_{\Tr_n,m}^{\Tr_n'}\right).
\end{equation} 
\end{theorem}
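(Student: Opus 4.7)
The plan is to deduce both inequalities from Corollary~\ref{cor-dist-support}(2) applied to the family of descendant sub-trees $\{\Tr_x'\}_{x\in S_n}$, viewed as $\#S_n$ induced sub-graphs of $\Tr$. The only non-trivial ingredient is a distance check specific to the tree structure.

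First I would verify that $d_\Tr(\Vr_{\Tr_x'},\Vr_{\Tr_y'})\geq 2$ for any two distinct $x,y\in S_n$. Since $\Tr$ is a tree with root $v$, any two distinct vertices $x,y\in S_n$ cannot be adjacent (otherwise combining the edge $x\sim y$ with the two geodesics from $v$ would produce a cycle), so $d_\Tr(x,y)\geq 2$. More generally, given $u\in \Vr_{\Tr_x'}$ and $u'\in \Vr_{\Tr_y'}$, the unique path in $\Tr$ from $u$ to $u'$ passes through both $x$ and $y$ (each is an ancestor of the respective endpoint), hence $d_\Tr(u,u')\geq d_\Tr(x,y)\geq 2$. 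Next, the identity recalled just before the statement, $\Delta_{\Tr_x,m}^{\Tr_x'} f(z)=\Delta_{\Tr,m}\tilde f(z)$ for $z\in\Vr_{\Tr_x'}$ and $f\in\Cc_c(\Vr_{\Tr_x'})$ extended by $0$ outside, shows that $\Delta_{\Tr_x,m}^{\Tr_x'}=\Delta_{\Tr,m}^{\Tr_x'}$ as self-adjoint operators on $\ell^2(\Vr_{\Tr_x'},m)$ (because the extension of $f$ by $0$ to $\Tr$ and to $\Tr_x$ produces identical values of $\tilde\Delta_{\Tr,m}\tilde f$ on $\Vr_{\Tr_x'}$). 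Corollary~\ref{cor-dist-support}(2), applied with $n$ replaced by $\#S_n$ and $\Gr_i$ running through the family $\{\Tr_x'\}_{x\in S_n}$, then yields
\[
\lambda_{\#S_n}(\Delta_m)\;\leq\;\max_{x\in S_n}\inf\sigma\bigl(\Delta_{\Tr,m}^{\Tr_x'}\bigr)\;=\;\max_{x\in S_n}\inf\sigma\bigl(\Delta_{\Tr_x,m}^{\Tr_x'}\bigr),
\]
which is exactly \eqref{vp-majoration}.

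For the radial case, I would invoke the hypothesis directly: when $\Tr$ is radial, all sub-trees $\Tr_x$ for $x\in S_n$ are isomorphic as weighted graphs to the common model $\Tr_n$, so the Dirichlet Laplacians $\Delta_{\Tr_x,m}^{\Tr_x'}$ are all unitarily equivalent to $\Delta_{\Tr_n,m}^{\Tr_n'}$ and share its bottom of spectrum. The maximum over $x\in S_n$ in \eqref{vp-majoration} therefore collapses to the single value $\inf\sigma(\Delta_{\Tr_n,m}^{\Tr_n'})$, yielding \eqref{vp-majoration-radial}. I do not expect any substantial obstacle; the entire argument reduces to the tree-theoretic observation that distinct descendant sub-trees rooted in the same sphere sit at graph distance at least two apart, which is precisely what makes Corollary~\ref{cor-dist-support}(2) applicable to arbitrarily many pieces at once.
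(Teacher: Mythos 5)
Your proof is correct and follows the paper's own route: the paper derives the theorem directly from Corollary~\ref{cor-dist-support}(2) applied to the descendant sub-trees $\{\Tr_x'\}_{x\in S_n}$, using the identification $\Delta_{\Tr_x,m}^{\Tr_x'}=\Delta_{\Tr,m}^{\Tr_x'}$ stated just before the theorem. Your explicit verification that distinct sub-trees rooted in $S_n$ lie at graph distance at least $2$ is exactly the (implicit) tree-theoretic fact the paper relies on.
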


In the next propositions, we give a quantitative way to estimate the eigenvalues in the
case of radial  simple trees  for the normalized and the combinatorial Laplacians. 

In case of a simple tree, the term \emph{radial}  simply means  that the function $\eta$  only depends on the distance $|x|$. 
By abuse of notation, we denote $\eta(n):=\eta(x),x \in S_n$.

\begin{proposition}\label{prop-majoration-radial-tree}
Let $\Tr:=(\Vr,\Er)$ be a radial simple tree.
 Let $x\in S_n$, then with the above notation:
\[
\inf \sigma \left(\Delta_{\Tr_{x},\eta}^{\Tr_{x}'}\right) 
 \leq 1 - \sqrt \frac{  1-\frac{1}{\eta(n)} }   { \eta(n+1)}
\]
and 
\[
\inf \sigma \left(\Delta_{\Tr_{x},1}^{\Tr_{x}'}\right) 
 \leq  \max(\eta(n),\eta(n+1)) \left(1 - \sqrt \frac{  1-\frac{1}{\eta(n)} }   { \eta(n+1)}\right).
\]
\end{proposition}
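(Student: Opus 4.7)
The plan is to construct an explicit test function supported on $\{x\}\cup S_{1,x}$, compute its Rayleigh quotient for the Dirichlet form on $\Tr_x'$, and optimize by AM--GM. Because the tree is radial, the degrees on each sphere are constant, which makes a piecewise-constant radial ansatz natural.

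Concretely, I would take $f=\alpha\, \one_{\{x\}}+\beta\,\one_{S_{1,x}}$ with $\alpha,\beta>0$ to be chosen, extended by $0$ to the rest of $\Tr_x$ (so in particular $f(\parent x)=0$). Counting edges carefully: the edge $(\parent x,x)$ contributes $\alpha^{2}$ to $Q(f,f)$; the $\eta(n)-1$ edges from $x$ to $S_{1,x}$ contribute $(\eta(n)-1)(\alpha-\beta)^{2}$; and the $(\eta(n)-1)(\eta(n+1)-1)$ edges from $S_{1,x}$ to $S_{2,x}$ contribute $(\eta(n)-1)(\eta(n+1)-1)\beta^{2}$. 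Expanding and collecting yields the clean identity
\[
Q(f,f)=\eta(n)\alpha^{2}+(\eta(n)-1)\eta(n+1)\beta^{2}-2(\eta(n)-1)\alpha\beta=\|f\|_{\eta}^{2}-2(\eta(n)-1)\alpha\beta,
\]
where $\|f\|_{\eta}^{2}=\eta(n)\alpha^{2}+(\eta(n)-1)\eta(n+1)\beta^{2}$.

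For the first inequality, Theorem \ref{t:min-max} (applied to the Dirichlet Laplacian on $\Tr_x'$ inside $\Tr_x$) gives $\inf\sigma(\Delta_{\Tr_x,\eta}^{\Tr_x'})\leq Q(f,f)/\|f\|_{\eta}^{2}$. Maximising $2(\eta(n)-1)\alpha\beta/\|f\|_{\eta}^{2}$ by AM--GM on $\eta(n)\alpha^{2}+(\eta(n)-1)\eta(n+1)\beta^{2}\geq 2\sqrt{\eta(n)(\eta(n)-1)\eta(n+1)}\,\alpha\beta$, with equality when $\eta(n)\alpha^{2}=(\eta(n)-1)\eta(n+1)\beta^{2}$, leads to
\[
\frac{Q(f,f)}{\|f\|_{\eta}^{2}}\leq 1-\frac{\eta(n)-1}{\sqrt{\eta(n)(\eta(n)-1)\eta(n+1)}}=1-\sqrt{\frac{1-1/\eta(n)}{\eta(n+1)}},
\]
which is exactly the claimed bound.

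For the second inequality, the quadratic form $Q(f,f)$ is independent of the measure, so the only change is in the denominator. Since $f$ is supported on vertices of degree either $\eta(n)$ or $\eta(n+1)$, one has the elementary estimate $\|f\|_{\eta}^{2}\leq \max(\eta(n),\eta(n+1))\,\|f\|_{1}^{2}$, hence
\[
\frac{Q(f,f)}{\|f\|_{1}^{2}}\leq \max(\eta(n),\eta(n+1))\cdot\frac{Q(f,f)}{\|f\|_{\eta}^{2}},
\]
and plugging in the optimal $(\alpha,\beta)$ from the normalized case yields the second bound. The only real work is the bookkeeping of edges and the AM--GM optimization; there is no genuine obstacle, just the need to be careful that the edge $(\parent x,x)$ is included (it accounts for the ``Dirichlet'' contribution $\alpha^{2}$) so that the clean identity $Q(f,f)=\|f\|_{\eta}^{2}-2(\eta(n)-1)\alpha\beta$ holds.
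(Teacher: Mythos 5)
Your proof is correct and follows essentially the same route as the paper: the same two-level test function supported on $\{x\}\cup S_{1,x}$ (the paper normalizes $\alpha=1$ and calls your $\beta$ its $\alpha$), the same edge bookkeeping including the Dirichlet edge $(\parent x,x)$, and the same optimal choice of the ratio $\beta/\alpha$ — the paper substitutes $\alpha=\sqrt{\eta(n)}/\sqrt{(\eta(n)-1)\eta(n+1)}$ directly, which is exactly the equality case of your AM--GM step. The passage to the combinatorial Laplacian via $\|f\|_\eta^2\leq\max(\eta(n),\eta(n+1))\|f\|_1^2$ is also identical to the paper's.
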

\begin{proof}
We treat first the case of the normalized Laplacian.
Let $x \in S_n$.
 Let $g$ be the function on $\Vr_{\Tr_x}$ defined by $g(x):=1$,
 $g(y):=\alpha$ for $y\in S_{1,x}$, and $g(y):=0$ otherwise.
Clearly,
\begin{align*}
\inf \sigma \left(\Delta_{\Tr_{x},\eta}^{\Tr_{x}'}\right) &\leq  \frac{\langle g,\Delta_\eta  g \rangle_\eta}{\langle   g,  g \rangle_\eta}\\
                                              & = \frac{1 + (\eta(n)-1) (1-\alpha)^2 + (\eta(n)-1) (\eta(n+1)-1) \alpha^2}
                                                         {\eta(n)+ (\eta(n)-1) \eta(n+1) \alpha^2}\\
                                              &= 1 - \frac{2 \alpha (\eta(n)-1)}
                                                            {\eta(n)+ \alpha^2 (\eta(n)-1) \eta(n+1)}\\
                                              &= 1 - \frac{ \sqrt{ 1-\frac{1}{\eta(n)}}}
                                                            {\sqrt{
                                                                \eta(n+1)}},
\end{align*}
where in the last line we have made the choice $\alpha=\frac{\sqrt{\eta(n)}}{\sqrt {(\eta(n)-1) \eta(n+1)}}$.

The same computation gives also
\begin{align*}
 \inf \sigma \left(\Delta_{\Tr_{x},1}^{\Tr_{x}'}\right) &\leq  \frac{\langle g,\Delta_1  g \rangle_1}{\langle   g,  g \rangle_1} 
\leq \max(\eta(n),\eta(n+1)) \frac{\langle g,\Delta_\eta  g \rangle_\eta}{\langle   g,  g \rangle_\eta}.
\end{align*}
This ends the proof.
\end{proof}

We now precise the result of Fujiwara and Higushi (see \cite[p
196]{Fu} and Corollary \ref{cor-ess-1}) by estimating the eigenvalues
for the normalized Laplacian $\Delta_\eta$. We also discuss the case
of the combinatorial Laplacian. 

\begin{theorem}\label{t:radial-tree}\
 Let $\Tr:=(\Vr,\Er)$ be a simple radial tree.
\begin{enumerate}
\item Let $m=\eta$.  Assume $\eta(n)$ is non-decreasing and tends to $+\infty$  as $n$ tends to $\infty$. 
Then
\[
\sigma_{\rm ess} (\Delta_\eta)=\{1\}
\textrm{ and }
\sigma (\Delta_\eta)=\{1\} \cup  \{\lambda_i(\Delta_\eta),
2-\lambda_i(\Delta_\eta),  i\geq 1\}, 
\]
where $(\lambda_i(\Delta_\eta))_{i\geq1}$ is an infinite sequence of eigenvalues converging to $1$.
Moreover, for  $\ve>0$, 
and $n \geq n(\ve)$ we have:
\[
1-2  \sqrt \frac{1}{\eta(n)} \leq \lambda_{(\# B_{n-1}+1)}(\Delta_\eta) \leq  \lambda_{\#S_n}(\Delta_\eta)  \leq
 1 - \frac{ 1-\ve} {\sqrt{ \eta(n+1)}}.
\]

\item  Let $m=1$. Assume that $\eta(n) \left(1-2 \sqrt \frac{1}{\eta(n)}\right) $ is non-decreasing and that $\eta(n)$ tends to $+\infty$  as $n$ tends to $\infty$. 
Then,  
\[
\sigma_{\rm ess} (\Delta_1)=\emptyset
\textrm{ and } 
\sigma (\Delta_1)=  \{\lambda_i(\Delta_1), i\geq 1\}, 
\]
where $\lambda_i(\Delta_1)$ is an infinite sequence of eigenvalues which tends to $+\infty$. Moreover, one has:
\[
\eta(n) \left(1-2  \sqrt \frac{1}{\eta(n)}\right) \leq \lambda_{(\# B_{n-1}+1)}(\Delta_1) \leq \eta(n).\]
\end{enumerate}
\end{theorem}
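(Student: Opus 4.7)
The plan for part (1) is to assemble four independent ingredients. First, I observe that on a simple radial tree one has $\eta_0 \equiv 0$ and $\eta_-(x) = 1$ for $|x| \geq 1$, whence $p_+(x) = (\eta(|x|)-1)/\eta(|x|) \to 1$ because $\eta(n)\to\infty$; Corollary~\ref{cor-ess-1} then yields $\sigma_{\rm ess}(\Delta_\eta) = \{1\}$. Second, the tree is bipartite (same reason: $\eta_0 \equiv 0$), so Proposition~\ref{prop-spectrum-bp} with $\Wr=\Vr$ gives symmetry of $\sigma(\Delta_\eta)$ about $1$. Third, for the lower bound on $\lambda_{\#B_{n-1}+1}(\Delta_\eta)$ I apply Theorem~\ref{thm-spectrum-bd} on $B_{n-1}^c$ with $a = 1 - 2/\eta(n)$ and $D = 1$, both legitimate thanks to the non-decrease of $\eta$; the computation $D^2-a^2 = (4/\eta(n))(1-1/\eta(n))\le 4/\eta(n)$ gives $\inf\sigma(\Delta_\eta^{B_{n-1}^c}) \ge 1 - 2/\sqrt{\eta(n)}$, and Dirichlet bracketing (the min-max characterization with the $\#B_{n-1}$ normalized deltas supported on $B_{n-1}$ as test vectors) transfers this bound to $\lambda_{\#B_{n-1}+1}(\Delta_\eta)$. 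Fourth, for the upper bound I combine Theorem~\ref{thm-tree-vp-majoration} with Proposition~\ref{prop-majoration-radial-tree} to obtain $\lambda_{\#S_n}(\Delta_\eta)\le 1-\sqrt{(1-1/\eta(n))/\eta(n+1)}$, and absorb $\sqrt{1-1/\eta(n)}$ into $1-\varepsilon$ for $n$ large. The explicit description of the full spectrum then follows from the symmetry combined with the fact that every spectral value strictly below $\inf\sigma_{\rm ess}=1$ is a genuine eigenvalue of finite multiplicity (Theorem~\ref{t:min-max}).

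For part (2), I handle the combinatorial Laplacian analogously. Emptiness of $\sigma_{\rm ess}(\Delta_1)$ is immediate from Theorem~\ref{t:lyap1} with $l=+\infty$, valid for every $c > 1$ because $\deg_+(x) - c\deg_-(x) = \eta(|x|)-1-c \to \infty$. The heart of the proof is the eigenvalue lower bound, for which I build a radial super-harmonic function by imposing the ratio $W(n+1)/W(n) = 1/\sqrt{\eta(n+1)}$; at $|x|=n\ge 1$ a direct computation yields
\[
\frac{\tilde \Delta_1 W(x)}{W(x)} = \eta(n) - \frac{\eta(n)-1}{\sqrt{\eta(n+1)}} - \sqrt{\eta(n)},
\]
and using $\sqrt{\eta(n+1)}\ge\sqrt{\eta(n)}$ (forced by the hypothesis together with $\eta\to\infty$) this is at least $\eta(n) - 2\sqrt{\eta(n)} + 1/\sqrt{\eta(n)} \ge \psi(n) := \eta(n)(1 - 2/\sqrt{\eta(n)})$. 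The root case $n=0$ reduces to $\eta(0)\le\eta(1)$. Hardy's inequality (Proposition~\ref{prop:hardy}) gives the form bound $\Delta_1 \ge \psi(|\cdot|)$, and Corollary~\ref{cor-vp-minoration-min-max} applied to the non-decreasing radial $\psi$ (extended by $0$ where $\psi<0$, using $\lambda_k\ge 0$) delivers $\lambda_{\#B_{n-1}+1}(\Delta_1) \ge \psi(n)$. For the upper bound, the vertices of $S_n$ are pairwise at graph distance $\ge 2$, so Corollary~\ref{cor-dist-support}(1) applied with $g_x = \delta_x$ for $x \in S_n$ gives $\lambda_{\#S_n}(\Delta_1) \le \max_{x\in S_n} \eta(x) = \eta(n)$; monotonicity of eigenvalues together with $\#B_{n-1}+1 \le \#S_n$ (automatic once $\eta(n)\ge 3$, which holds eventually) then yields the stated bound. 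The description of $\sigma(\Delta_1)$ follows because the essential spectrum is empty and $\psi(n)\to\infty$.

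The main difficulty is the choice of $W$ in part (2): the ratio $1/\sqrt{\eta(n+1)}$ is dictated by the AM-GM equality case for $(\eta(n)-1)u_n + 1/u_{n-1}$, which forces the optimal telescoping to $2\sqrt{\eta(n)}$, and the monotonicity of $\eta$ is precisely what lets one compare the $n+1$-indexed term with the $n$-indexed target. Secondary technical points are the index bookkeeping between $\lambda_{\#B_{n-1}+1}$ and $\lambda_{\#S_n}$ in the upper bounds and the possible negativity of $\psi(n)$ for small $n$, both of which are handled by elementary means.
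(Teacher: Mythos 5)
Your proof is correct and follows essentially the same route as the paper: Corollary \ref{cor-ess-1} and bipartiteness (Proposition \ref{prop-spectrum-bp}) for the spectrum's structure, Theorem \ref{thm-spectrum-bd} plus min-max/Dirichlet bracketing for the lower bounds, and Theorem \ref{thm-tree-vp-majoration} with Proposition \ref{prop-majoration-radial-tree} (resp.\ Dirac test functions via Corollary \ref{cor-dist-support}) for the upper bounds. The only cosmetic difference is in part (2), where you build the radial super-harmonic function with ratio $W(n+1)/W(n)=1/\sqrt{\eta(n+1)}$ while the paper takes $c=\sqrt{\eta(n)-1}$ in $\psi_c$; both are the same AM--GM-optimized geometric decay and yield the stated bound $\eta(n)-2\sqrt{\eta(n)}$.
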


\begin{proof}
We begin by the left inequality for the normalized Laplacian. Note that by hypothesis, for all $x\in B_{n-1}^c$, 
\[
\deg_+(x)-\deg_-(x)\geq \frac{\eta(n)-2}{\eta(n)}.
 \]
Therefore by Theorem \ref{thm-spectrum-bd} and by Corollary
\ref{cor-vp-minoration-min-max}, if the corresponding eigenvalue exists:
\begin{align*}
 \lambda_{(\#B_{n-1}+1)}(\Delta_\eta) &\geq 1-\left(\sqrt{1-
     \left(\frac{\eta(n)-2}{\eta(n)}\right)^2}\right) = 1- 2 \frac{
   \sqrt {1 -\frac{1}{\eta(n)}}} {\sqrt{ \eta(n)}} \geq 1-2  \sqrt
 \frac{1}{\eta(n)}. 
\end{align*}
Since $\eta$ tends to $\infty$, we have $\#B_{n-1}+1 \leq \#S_n$,
for $n$ large enough. Next the right
inequality for $\lambda_{\# S_n}(\Delta_\eta)$, (if this eigenvalue
exists) is a straightforward application of Theorem
\ref{thm-tree-vp-majoration} and Proposition
\ref{prop-majoration-radial-tree}. Finally, since the upper estimate
is strictly lower than $1$, the min-max
Theorem \ref{t:min-max} ensures the existence of a infinite number of
eigenvalue under the essential spectrum. 

We turn to $\Delta_1$. The left inequality is obtained by taking 
$ c= \sqrt{\eta(n)-1}$ in \eqref{e:psic} since $\psi_c$ can be written
as  
\[
 \psi_c(x)= \eta(n)-\left(c+\frac{\eta(n)-1}{c}\right), \quad x\in S_n.
\]
Corollaries \ref{cor-ess-1} and \ref{cor-vp-minoration-min-max} give
the desired result for the essential spectrum. Then we have:
\[\lambda_{(\# B_{n-1}+1)}\leq \lambda_{(\# S_{n})}\leq \eta(n)\]
by taking Dirac test functions. \end{proof}

\subsection{The case of  general weakly  spherically symmetric graphs}
In this section, we investigate the case of general weakly
spherically symmetric graphs. 

\begin{proposition}\label{prop-eigenvalue-gen-wss}
\begin{enumerate}
\item Let $\Gr:=(\Vr,\Er,m)$ be a weakly spherically symmetric graph
  with $m=\eta$.  Assume  that $p_+(n)(1-p_+(n))$ is non-increasing. 
Then, if the corresponding eigenvalues exist, we have:
\[
 \lambda_{(\# B_{n-1}+1)}(\Delta_\eta) \geq  1- 2 \sqrt {p_+(n)(1-p_+(n))}
\]
and
\[
\lambda_n(\Delta_\eta) \leq 1-\sqrt{ p_+(3n-2) p_-(3n-2)}.
\]

\item
Let $\Gr:=(\Vr,\Er,m)$ be a weakly spherically symmetric graph with
$m=1$.  Assume  that both $\eta(n)$ and  $\eta(n)- 2 \sqrt
{\eta_+(n)\eta_-(n)}$ are non-decreasing. 
Then we have, if the corresponding eigenvalues exist: 
\[
 \lambda_{(\# B_{n-1}+1)}(\Delta_1) \geq  \eta(n)- 2 \sqrt {\eta_+(n)\eta_-(n)}
\]
and
\[
\lambda_n(\Delta_1) \leq \eta(2n-1).
\]
\end{enumerate}
\end{proposition}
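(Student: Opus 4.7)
The plan is to prove each of the four inequalities by applying the variational tools of Section \ref{s:4}: the lower bounds via the Hardy inequality (Proposition \ref{prop:hardy}) combined with Corollary \ref{cor-vp-minoration-min-max}, and the upper bounds via the test-function criterion of Corollary \ref{cor-dist-support}. For the lower bound in (1), I take $\psi(n):=1-2\sqrt{p_+(n)(1-p_+(n))}$, which is non-negative automatically and, by the standing hypothesis, non-decreasing and radial; I then seek a radial super-harmonic function $W(x)=W(|x|)$ such that $\tilde\Delta_\eta W(x)/W(x)\geq\psi(|x|)$. Writing $r_n:=W(n+1)/W(n)$, on a weakly spherically symmetric graph one has
\[
\frac{\tilde\Delta_\eta W(x)}{W(x)} = p_+(n)(1-r_n)+p_-(n)\bigl(1-1/r_{n-1}\bigr)\quad\text{for } x\in S_n,
\]
so the candidate $r_n=\sqrt{(1-p_+(n))/p_+(n)}$, inspired by the single-sphere optimization $p_+r+p_-/r\geq 2\sqrt{p_+p_-}$ (saturated at $r=\sqrt{p_-/p_+}$), will reproduce the advertised bound once the monotonicity of $p_+(n)(1-p_+(n))$ is used to control the cross-term between consecutive spheres.

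For the upper bound in (1), I apply Corollary \ref{cor-dist-support} with $n$ test functions supported on pairs of consecutive spheres $\{S_{3i-2},S_{3i-1}\}$ for $i=1,\dots,n$; the spacing $3i-2$ guarantees that the supports are pairwise at graph-distance at least $2$. Taking $g_i=\delta_{x_i}+\alpha_i\delta_{y_i}$ with $x_i\in S_{3i-2}$, $y_i\in S_{3i-1}$ a neighbour of $x_i$, and $\alpha_i=\sqrt{\eta(x_i)/\eta(y_i)}$ optimized as in the proof of Proposition \ref{prop-majoration-radial-tree}, the Rayleigh quotient becomes $1-\Er(x_i,y_i)/\sqrt{\eta(x_i)\eta(y_i)}$; exploiting the radial structure of the weakly spherically symmetric graph (essentially passing to radial test functions on the $1$-dimensional quotient chain), this is bounded above by $1-\sqrt{p_+(3i-2)p_-(3i-2)}$, and the claim follows by taking the maximum over $i$.

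Part (2) follows the same scheme. For the lower bound I apply Hardy with $W(x)=c^{|x|}$ and the sphere-by-sphere optimal choice $c=\sqrt{\eta_-(n)/\eta_+(n)}$, mirroring the computation in the proof of Theorem \ref{thm-spectrum-bd} but in the weakly-spherically-symmetric setting where $\eta_\pm(x)$ depends only on $|x|$; the quantity $\tilde\Delta_1 W(x)/W(x)$ then reaches $\eta(|x|)-2\sqrt{\eta_+(|x|)\eta_-(|x|)}$, and the hypothesis that this function is non-decreasing and non-negative lets Corollary \ref{cor-vp-minoration-min-max} conclude. The upper bound is the cleanest step: take $g_i=\delta_{x_i}$ for any $x_i\in S_{2i-1}$, $i=1,\dots,n$; these singletons are pairwise at graph-distance at least $2$, and since $m\equiv 1$ the Rayleigh quotient equals $\eta(x_i)=\eta(2i-1)$, so the monotonicity of $\eta$ gives $\max_i\eta(2i-1)=\eta(2n-1)$.

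The main obstacle lies in the lower bound of part (1). A single global constant $r_n\equiv c$ only yields the weaker bound $1-p_0(n)-2\sqrt{p_+(n)p_-(n)}$ sphere by sphere, which matches the claimed $1-2\sqrt{p_+(n)(1-p_+(n))}$ only via an elementary but non-trivial inequality (valid in particular whenever $p_+(n)\geq 1/2$, which is implicit in the monotonicity hypothesis for genuinely growing graphs). Allowing $r_n$ to depend on $n$ introduces a coupling between the Hardy constraint at sphere $n$ and the one at sphere $n+1$, and the heart of the argument is to verify that the monotonicity of $p_+(n)(1-p_+(n))$ is precisely what keeps all these constraints simultaneously satisfiable with the sharper bound.
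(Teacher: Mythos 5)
Your overall strategy — Hardy inequality plus Corollary \ref{cor-vp-minoration-min-max} for the lower bounds, Corollary \ref{cor-dist-support} for the upper bounds — is exactly the paper's, and your part (2) is essentially fine (for the upper bound the paper uses the sphere indicators $\bone_{S_{2i-1}}$ rather than Dirac masses, but your $\delta_{x_i}$ give $\eta(2i-1)$ just as well). The upper bound in part (1), however, contains a genuine gap. Your test functions $g_i=\delta_{x_i}+\alpha_i\delta_{y_i}$ produce the Rayleigh quotient $1-\Er(x_i,y_i)/\sqrt{\eta(x_i)\eta(y_i)}$, and this is \emph{not} bounded above by $1-\sqrt{p_+(3i-2)\,p_-(3i-2)}$ in general: $\Er(x_i,y_i)$ is a single edge weight, whereas $\sqrt{p_+p_-}$ involves the full forward degree $\eta_+(x_i)$ and backward degree $\eta_-(y_i)$, so as soon as $x_i$ has several forward neighbours the inequality goes the wrong way (an antitree is a clean counterexample). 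Your parenthetical about "passing to radial test functions" is not a reformulation of your computation but the actual proof: one must take $g_i=\bone_{S_{3i-2}}+\alpha_i\bone_{S_{3i-1}}$, estimate the energy by $\eta(S_{3i-2})+\alpha_i^2\eta(S_{3i-1})-2\alpha_i\eta_+(S_{3i-2})$, and optimize in $\alpha_i$ using $\eta_+(S_n)=\eta_-(S_{n+1})$.

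The lower bound in part (1), which you rightly single out as the delicate step, is left unfinished, and the variable-ratio construction you sketch (with its coupling between consecutive spheres and the case discussion around $p_+\geq 1/2$) is an unnecessary detour. The paper's device is simply to bound $p_-(x)\leq 1-p_+(x)$, so that on $B_{n-1}^c$ one has $\deg_+-\deg_-\geq a:=2p_+(n)-1$ and $\deg_++\deg_-\leq D:=1$; Theorem \ref{thm-spectrum-bd} — that is, the \emph{constant}-ratio function $c^{-|x|}$ with $c=\sqrt{(1+a)/(1-a)}$ on the exterior domain — then yields $D-\sqrt{D^2-a^2}=1-2\sqrt{p_+(n)(1-p_+(n))}$ exactly, with no constraints to propagate from sphere to sphere. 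A smaller caveat on your part (2) lower bound: the function $c^{-|x|}$ with the optimal $c$ gives $\eta_++\eta_--2\sqrt{\eta_+\eta_-}$, which falls short of the stated $\eta-2\sqrt{\eta_+\eta_-}$ by $\eta_0$ when there are spherical edges; the paper handles $\eta_0$ by absorbing it into the backward degree (arriving at $\eta-2\sqrt{\eta_+(\eta-\eta_+)}$), so your claim that the ratio "reaches" $\eta-2\sqrt{\eta_+\eta_-}$ needs justification whenever $\eta_0>0$.
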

\begin{corollary}
Under the hypothesis of Proposition \ref{prop-eigenvalue-gen-wss},  if moreover $p_+(n)\to 1$ as $n\to +\infty$, then
$\sigma_{\rm ess} (\Delta_\eta)=\{1\}$ and the min-max Theorem \ref{t:min-max} implies the existence of an infinite number of eigenvalues. Thus we have 
\[
\sigma (\Delta_\eta)=\{1\} \cup  \{\lambda_i^-, \lambda_i^+,  i\geq 1\}
\]
where $(\lambda_i^-)_{i\geq1}$ and  $(\lambda_i^+)_{i\geq1}$ are 
infinite sequences of eigenvalues converging to $1$ from below and from
above respectively. 
Similarly, if $\eta(n)- 2 \sqrt {\eta_+(n)\eta_-(n)}\to +\infty$ as $n\to \infty$, then $\sigma_{\rm ess} (\Delta_1)= \emptyset$ and there is an infinite sequence of eigenvalues tending to $+\infty$.
\end{corollary}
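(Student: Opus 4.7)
The plan splits naturally along the two halves of the corollary. For the normalized Laplacian $\Delta_\eta$, I would first invoke Corollary \ref{cor-ess-1} directly: the assumption $p_+(n)\to 1$ is exactly its hypothesis (the $1$-dimensional decomposition is the one defining the spherical symmetry), so we obtain $\sigma_{\rm ess}(\Delta_\eta)=\{1\}$ at once. Next, the upper bound from Proposition \ref{prop-eigenvalue-gen-wss}(1), namely $\lambda_n(\Delta_\eta)\leq 1-\sqrt{p_+(3n-2)p_-(3n-2)}$, should be read via the min-max Theorem \ref{t:min-max} as a bound on $\mu_n(\Delta_\eta)$. Since the graph is infinite and connected and each sphere $S_k$ is nonempty, one has $p_\pm(k)>0$, so the right-hand side lies strictly below $1=\inf\sigma_{\rm ess}(\Delta_\eta)$. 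The dichotomy in Theorem \ref{t:min-max} then forces $\mu_n$ to be a genuine eigenvalue $\lambda_n^-<1$; this produces an infinite sequence that can only accumulate at $1$ from below. For the sequence $(\lambda_n^+)$ above $1$, I would simply quote Proposition \ref{p:Fuj}, whose conclusion is exactly the required infinite sequence accumulating at $1$ from above.

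For the combinatorial Laplacian $\Delta_1$, the argument is parallel but simpler. Reading the lower bound from Proposition \ref{prop-eigenvalue-gen-wss}(2) as a bound on $\mu_{\#B_{n-1}+1}(\Delta_1)$, the assumption $\eta(n)-2\sqrt{\eta_+(n)\eta_-(n)}\to+\infty$ forces these min-max values to go to $+\infty$. By the same dichotomy of Theorem \ref{t:min-max} (each $\mu_k$ is either an eigenvalue or equals $\inf\sigma_{\rm ess}$), this is impossible unless $\inf\sigma_{\rm ess}(\Delta_1)=+\infty$; hence $\sigma_{\rm ess}(\Delta_1)=\emptyset$. Once there is no essential spectrum, every $\mu_n(\Delta_1)$ is a genuine eigenvalue, and the upper bound $\lambda_n(\Delta_1)\leq \eta(2n-1)$ ensures each is finite. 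Thus $(\lambda_n(\Delta_1))_n$ is an infinite sequence which must tend to $+\infty$.

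The only delicate point, and therefore the main potential obstacle, is conceptual rather than computational: the bounds in Proposition \ref{prop-eigenvalue-gen-wss} are stated \emph{conditional} on the existence of the eigenvalues. The clean way around this is to interpret those bounds as unconditional statements about the min-max values $\mu_n$ defined in Theorem \ref{t:min-max}. Once that re-reading is in place, Theorem \ref{t:min-max} automatically upgrades each $\mu_n$ lying strictly below the (putative) essential spectrum to a true eigenvalue, and the proof reduces to the three ingredients listed above: Corollary \ref{cor-ess-1}, Proposition \ref{prop-eigenvalue-gen-wss}, and Proposition \ref{p:Fuj}.
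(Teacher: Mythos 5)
Your proposal is correct and follows essentially the route the paper intends: Corollary \ref{cor-ess-1} (equivalently, the lower bound of Proposition \ref{prop-eigenvalue-gen-wss} together with Proposition \ref{p:upsidedowness}) for $\sigma_{\rm ess}(\Delta_\eta)=\{1\}$, the upper bounds of Proposition \ref{prop-eigenvalue-gen-wss} read as bounds on the min-max values $\mu_n$ of Theorem \ref{t:min-max} to force genuine eigenvalues, and Proposition \ref{p:Fuj} for the sequence above $1$. Your remark that the conditional bounds of the proposition are really unconditional statements about $\mu_n=\nu_n$ (since they are proved via Rayleigh quotients of explicit test functions, resp.\ via comparison with the multiplication operator $\psi(\cdot)$) is exactly the right way to close the only delicate point.
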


\begin{proof}
For the first inequality, note that by hypothesis, for all $x\in B_{n-1}^c$, 
\[
p_+(x)-p_-(x)\geq 2p_+(n)-1.
 \]
 By Theorem \ref{thm-spectrum-bd} and by Corollary
\ref{cor-vp-minoration-min-max},
\begin{align*}
 \lambda_{(\# B_{n-1}+1)}(\Delta_\eta) &\geq 1-\sqrt{1-
   \left(2p_+(n)-1\right)^2} = 1- 2 \sqrt {p_+(n)(1-p_+(n))}. 
\end{align*}
For the right inequality,
let $g_n$ be the function defined on $\Vr$ by $g_n(x)=1$ if $x\in
S_n$, $g_n(x)=\alpha_n$ if $x\in S_{n+1}$ and $g_n(x)=0$ otherwise,
where $\alpha_n$ will be chosen later.  
Since for $|i-j|\geq 3$, $d_\Gr(\supp\,  g_i, \supp\,  g_j)\geq 2$, then 
\[
\lambda_n (\Delta_\eta) \leq \max_{i\in\{1,4,\dots,3n-2\} } \frac{\langle g_i,\Delta_\eta g_i
  \rangle_\eta}{\langle g_i, g_i \rangle_\eta}. 
\]
Now a computation gives
\begin{align*}
 \frac{\langle g_n,\Delta_\eta  g_n \rangle_\eta}{\langle   g_n,  g_n \rangle_\eta}
                                                 & = \frac{\eta_-(S_n) 1^2 + \eta_+(S_n) (1-\alpha_n)^2+ \eta_+(S_{n+1}) \alpha_n^2}
                                                              {\eta(S_n) +\alpha_n^2 \eta(S_{n+1})}\\
                                                 & \leq \frac{\eta(S_n) + \alpha_n^2 \eta(S_{n+1}) -2\alpha_n \eta_+(S_n)  }
                                                              {\eta(S_n) +\alpha_n^2 \eta(S_{n+1})}\\
                                                  &=1-\frac{2\alpha_n  \eta_+(S_n)}
                                                             {\eta(S_n) +\alpha_n^2 \eta(S_{n+1})}\\
                                                  &=1-\frac{ \eta_+(S_n)}
                                                             {\sqrt{\eta(S_n) \eta(S_{n+1})}}\\
                                                   &=1-\sqrt{  \frac{ \eta_+(S_n)} {\eta(S_n) }
                                                             \frac{\eta_-(S_{n+1})}{ \eta(S_{n+1})}}\\
                                                    &=1-\sqrt{ p_+(n) p_-(n)},\\
\end{align*} 
with the choice $\alpha_n=\sqrt{\frac{\eta(S_n)}{\eta(S_{n+1})}}$ and
since $\eta_+(S_{n})=\eta_-(S_{n+1})$. 
For the Laplacian $\Delta_1$, as before, we obtain
\begin{align*}
 \lambda_{(\# B_{n-1}+1)}(\Delta_1) &\geq \eta(n)- 2 \sqrt
 {\eta_+(n)(\eta(n)-\eta_+(n))}. 
\end{align*}
For the right inequality, let $g_n$ be the function defined on $\Vr$
by $g_n(x)=1$ if $x\in 
S_n$ and $g_n(x)=0$ otherwise. Since for $|i-j|\geq 2$, $d_\Gr(\supp\,
g_i, \supp\,  g_j)\geq 2$, then  
\begin{align*}
\lambda_n (\Delta_1) \leq \max_{i\in\{1,3,\dots,2n-1\} }
\frac{\langle
  g_i,\Delta_1 g_i   \rangle_1}{\langle g_i, g_i \rangle_1} &
\leq \frac{\eta_+(S_{2n-1})+\eta_-(S_{2n-1})}{|S_{2n-1}|} \leq \eta(2n-1).
\end{align*}
This ends the proof.
\end{proof}

\subsection{The case of antitrees}
A  simple graph $\Gr$ is an  \emph{antitree} if there exists a 1-dimensional decomposition $(S_n)_{n\geq 0}$ of $\Vr$ such that $\eta_+(x)=\# S_{n+1},\eta_-(x)=\#S_{n_1}$ and $\eta_0(x)=0$ for all $x\in S_n, n\geq 0$. 
Antitrees are bipartite graphs. 
The spectral decomposition of the Laplacian on antitrees is made in \cite{BK}. It is shown that the spectrum of $\Delta_\eta$ is  the union of $\{1\}$ and the spectrum of a Jacobi matrix.

This comes from the fact that if $f$ is orthogonal to radial functions then $A_m f= 0$ and then $\Delta_m f= \deg(\cdot) f$. Therefore for $\Delta_\eta$, $1$ is an eigenvalue with infinite multiplicity.  The Jacobi matrix corresponds to the action of the Laplacian on radial functions.
The upper estimate for the eigenvalues in Proposition \ref{prop-eigenvalue-gen-wss} is in fact an estimate for the eigenvalues associated to this radial part of the Laplacian. 
Therefore, in general, the upper estimate for the eigenvalues of $\Delta_m$ in Proposition \ref{prop-eigenvalue-gen-wss} is reasonable.

%%%%%%%%%%%%%%%%%%%%%%%%%%%%%%%%%%%%%%%%%%%%%%%%%%%%%%%%%%%%%%%%%%%%%%%%%%%%%%%%%%%%%%%%%%%%%%%%%%%%%%%%%%%%%%%%%%%%%%%%%%%%%%%%%%%%%%%%%%%%%%%%%%%%%%%%%%%%%%%%%%%%%%%%%%%%%%%%%%%%

\section{An Allegretto-Piepenbrink type theorem for the essential
  spectrum}\label{s:6} 
In this section, we prove the reverse part of an
Allegretto-Piepenbrink type theorem for the essential spectrum. This
gives a partial reverse statement of Corollary \ref{cor-lyap-spec}. 
\begin{theorem}\label{thm-reverse}
Let $\Gr:=(\Vr,\Er,m)$ be a weighted graph. Let $\lambda^0:= \inf
\sigma(\Delta_m)$ and $\lambda^0_{ess}:=\inf \sigma_{\rm
  ess}(\Delta_m) $. Then we have:
\begin{enumerate}[{\rm a)}]
 \item Let $\lambda \leq \lambda^0$, then there exists a positive
   function $W$ such that  
\[
\tilde \Delta_m W (x) \geq  \lambda W(x).
\]
 \item For all
$\ve>0$, there exist $N_1:=N_1(\ve)\geq 1$, $C:=C(\ve)>0$, and  a positive function $W$ such 
that
\[
\tilde \Delta_m W (x) \geq  \left( \lambda^0_{ess}-\ve \right) W(x)- C \bone_{B_{N_1}}(x).
\]

 \item\label{iii:thm-reverse} If moreover: \beq \label{assumption-min-m}\inf\{ m(x),x\in \Vr\} >0, \eeq
 then for all
$\ve>0$, there exist $N_2:=N_2(\ve)\geq 1$ and  a positive function $W$ such 
that
\[
\tilde \Delta_m W (x) \geq  \left( \lambda^0_{ess}-\ve \right) \bone_{B_{N_2}^c}(x) W(x).
\]

\item If $\Gr:=\Gr_\N$ is a weighted graph on $\N$ such that  
\[m(\N)   =+\infty\]
 then the conclusion of \ref{iii:thm-reverse}) holds without the additional
  assumption \eqref{assumption-min-m}. 
\end{enumerate}
\end{theorem}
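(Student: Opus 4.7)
The plan combines exhaustion by finite subsets, Perron--Frobenius, and the Harnack inequality from Section~\ref{s:7}, together with Persson's lemma for the essential-spectrum parts.

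For (a), take an exhaustion $(\Vr_n)_n$ of $\Vr$ by finite connected subsets. The Dirichlet Laplacian $\Delta_m^{\Vr_n}$ acts on a finite-dimensional Hilbert space, and applying Perron--Frobenius to the non-negative irreducible matrix $c\,\mathrm{Id}-\Delta_m^{\Vr_n}$ for $c$ large yields a strictly positive eigenfunction $W_n$ associated to the smallest eigenvalue $\lambda^0_n:=\inf\sigma(\Delta_m^{\Vr_n})$. Dirichlet bracketing gives $\lambda^0_n\downarrow\lambda^0$. Normalizing $W_n(x_0)=1$ at a basepoint and using the Harnack inequality of Section~\ref{s:7} for uniform two-sided bounds on each fixed finite set, a diagonal extraction produces a pointwise limit $W>0$ with $\tilde\Delta_m W=\lambda^0 W$; for any $\lambda\leq\lambda^0$ this gives (a).

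For (b), apply Persson's formula \eqref{e:Persson} to fix a finite $\Kr$ with $\mu_\Kr:=\inf\sigma(\Delta_m^{\Kr^c})\geq\lambda^0_{ess}-\varepsilon$, and apply part (a) to the induced graph $\Gr^{\Kr^c}$ to produce a positive $W_\Kr$ on $\Kr^c$ with $\tilde\Delta_{\Gr^{\Kr^c},m}W_\Kr\geq\mu_\Kr W_\Kr$. Extending $W_\Kr$ to $\Vr$ by any positive function on $\Kr$ gives $\tilde\Delta_m W=\tilde\Delta_{\Gr^{\Kr^c},m}W_\Kr$ on $\Kr^c\setminus\partial(\Kr^c)$, with a bounded discrepancy on the finite set $\Kr\cup\partial(\Kr^c)$. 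Choosing $N_1$ with $B_{N_1}\supset\Kr\cup\partial(\Kr^c)$ and $C$ bounding the discrepancy yields (b).

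For (c) and (d), the objective is to replace the arbitrary extension of (b) so as to absorb the $-C\bone_{B_{N_1}}$ term; when $\lambda^0_{ess}=0$ then $W\equiv 1$ already suffices. Otherwise pick $\Kr$ with extra slack $\mu_\Kr\geq\lambda^0_{ess}-\varepsilon/2$ and extend $W_\Kr$ to $\Kr$ via a scaled harmonic extension $\alpha V$, where $V$ solves the discrete Dirichlet problem $\tilde\Delta_m V=0$ on $\Kr$ with boundary data $W_\Kr|_{\partial(\Kr^c)}$ and $\alpha\geq 1$. A direct computation gives
\[
\tilde\Delta_m W(x)\;=\;(\alpha-1)\cdot\frac{1}{m(x)}\sum_{y\in\Kr^c,\,y\sim x}\Er(x,y)W_\Kr(y)\;\geq\;0
\]
for $x\in\Kr$, so super-harmonicity on $\Kr$ is automatic. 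On $\partial(\Kr^c)\subset B_{N_2}$, super-harmonicity follows from the interplay between the slack $\mu_\Kr-(\lambda^0_{ess}-\varepsilon)\geq\varepsilon/2$, the Harnack control of the oscillation of $W_\Kr$ on the finite set $\partial(\Kr^c)$, and the bounds on the boundary weights $\frac{1}{m(x)}\sum_{y\in\Kr,\,y\sim x}\Er(x,y)$ provided by $\inf m>0$ in (c) or by $m(\N)=\infty$ combined with the 1D two-term recursion in (d). The main difficulty is reconciling the two competing demands at the interface $\partial\Kr$: the extension must be small enough on the $\Kr^c$-facing side to preserve the eigenvalue-type inequality at $\partial(\Kr^c)$, yet large enough on the $\Kr$ side to ensure super-harmonicity on $\Kr$, and the hypothesis $\inf m>0$ (respectively $m(\N)=\infty$ in dimension one) is precisely what makes this balance achievable.
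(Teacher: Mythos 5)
Your parts a) and b) are essentially sound. For a) you use exhaustion, Perron--Frobenius and the Harnack machinery of Section~\ref{s:7}; the paper simply cites \cite{HK}, where this is the proof, so no issue there. For b) the paper instead obtains the super-harmonic function as $\phi=\bigl(\Delta_m^{B_K^c}-(\lambda^0_{ess}-\ve)\bigr)^{-1}\psi$ for $\psi\in\Cc_c(B_K^c)$ non-negative, using that the resolvent is positivity improving; your route via a) is workable, but note one imprecision: applying a) to the \emph{induced} graph $\Gr^{\Kr^c}$ only yields the constant $\inf\sigma(\Delta_{\Gr^{\Kr^c},m})$, which can be strictly smaller than $\mu_\Kr=\inf\sigma(\Delta_m^{\Kr^c})$, since the Dirichlet operator equals the induced operator plus the non-negative potential $x\mapsto\frac{1}{m(x)}\sum_{y\in\Kr}\Er(x,y)$ on $\partial(\Kr^c)$. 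You must run the exhaustion argument for the Dirichlet operator itself (Persson's lemma concerns that operator); this is a routine fix.

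The genuine gap is in c) and d), which are the new content of the theorem. Your gluing is performed at the interface $\partial\Kr$, by extending $W_\Kr$ into $\Kr$ as $\alpha V$ with $V$ harmonic and $\alpha\ge 1$. At a point $x\in\partial(\Kr^c)$ the Dirichlet inequality gives only
\[
\tilde\Delta_m W(x)\ \ge\ \mu_\Kr\,W_\Kr(x)\ -\ \frac{\alpha}{m(x)}\sum_{y\in\Kr,\,y\sim x}\Er(x,y)V(y),
\]
and you need the right-hand side to be non-negative. The available slack is proportional to $W_\Kr(x)$, while the loss is at least $\deg_\Kr(x)$ times the values of $W$ on $\Kr$; these values cannot be made small because super-harmonicity on $\Kr$ forces $W|_\Kr$ to dominate an average of the fixed boundary data $W_\Kr|_{\partial(\Kr^c)}$, and you also impose $\alpha\ge1$. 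So the two constraints you identify as ``competing'' genuinely conflict whenever $\mu_\Kr$ is small compared with the local weighted degrees at $\partial\Kr$ (e.g.\ the combinatorial Laplacian with $\lambda^0_{ess}$ small and large vertex degrees near $\Kr$); Harnack only bounds the oscillation of $W_\Kr$ on $\partial(\Kr^c)$ by a constant depending on $\Kr$ and does not resolve this. Moreover $\inf m>0$ plays no role in your balance. Its actual purpose, and the key idea you are missing, is \emph{decay}: since $\phi\in\ell^2(B_K^c,m)$ and $\inf m>0$, one has $\phi(x)\to0$ as $|x|\to\infty$, so the superlevel set $A_{\ve'}=B_K\cup\{\phi>\ve'\}$ with $\ve'=\frac12\min_{S_{K+1}}\phi$ is \emph{finite}. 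The paper glues there, not at $\partial\Kr$: it replaces $\phi$ on $A_{\ve'}$ by its harmonic extension $u$, and the maximum principle gives $u\le\ve'\le\phi$ at every interface neighbour, so passing from $\phi$ to $W$ can only \emph{increase} $\tilde\Delta_m W$ at points of $\delta(A_{\ve'}^c)$ and no quantitative balance is needed. Likewise in d), $m(\N)=+\infty$ forces $\phi^\N\in\ell^2$ to fail to be non-decreasing, producing a point $n_0$ with $\phi^\N(n_0+1)\le\phi^\N(n_0)$ where one glues a constant; your appeal to ``the 1D two-term recursion'' does not identify this mechanism.
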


\begin{remark}
  The condition $\inf\{ m(x),x\in \Vr\} >0$ is equivalent to the
  inclusion $\ell^2(\Vr,m) \subset \ell^\infty(\Vr,m)$. 
\end{remark}

\begin{proof}
The first point is well-known, see Theorem 3.1 in \cite{HK} for a proof. 
Let us turn to the other points. Since  $\lambda^0_{ess}:=\inf \sigma_{\rm ess}(\Delta_m)$, by Persson lemma,
 there exists $K$ such that the infimum of the spectrum of the
 Dirichlet operator $\Delta_m^{B_K^c}$ is larger than $
 \lambda^0_{ess}-\ve$. 

The operator $ \left( \Delta_m^{B_K^c} -
  (\lambda^0_{ess}-\ve) \right)^{-1}$ is thus well defined on $B_K^c$ and
is positive improving.  Indeed, it is well-known that the Dirichlet
heat semigroup is positive improving, see for example Corollary 2.9
in  \cite{KL2}.  
Let $\psi$ be  a non-negative and non trivial  function in 
$\ell^2(B_K^c,m)$, since $\lambda^0_{ess}-\ve$ is strictly below
spectrum of $\Delta_m^{B_K^c}$, writing 
\[
 \left( \Delta_m^{B_K^c} - (\lambda^0_{ess}-\ve)
\right)^{-1}\psi(x) = \int_0^\infty e^{t(\lambda^0_{ess}-\ve) }P_t \psi(x)\, dt,
\]
we see that $ \left( \Delta_m^{B_K^c} - (\lambda^0_{ess}-\ve)
\right)^{-1}\psi(x)>0$, for all $x$. 
 
Now, let $\psi$ a non-negative (non trivial) function in $\Cc_c(B_K^c)$ and
consider $\phi= \left( \Delta_m^{B_K^c} - (\lambda^0_{ess}-\ve)
\right)^{-1}\psi$. 
 Then $\phi>0$ and $\phi$ satisfies $\Delta_m^{B_K^c} \phi (x) \geq
 (\lambda^0_{ess}-\ve)\phi(x)$. This gives the second point.

Now, since $\phi \in \ell^2(B_K^c,m)$, the condition $\inf\{ m(x),x \in \Vr\} >0$ implies $\phi (x) \to 0$ when $|x|\to \infty$.
Let $\ve'= \frac{1}{2} \min\{ \phi(x) ,x\in S_{K+1} \}$, then the set
$A_{\ve'}=B_K \cup \{x\in B_K^c, \phi(x)> \ve' \}$ is finite. 

Recall that $\delta (A_{\ve'}^c)$ is the set  of points $x$ in $A_{\ve'}^c$ who have
a neighbor which belongs to $A_{\ve'}$. 
Let $u$ be  the harmonic function in $A_{\ve'}\cup \delta (A_{\ve'}^c)$ such that 
\[
\left\{ \begin{array}{l}
\Delta_m u = 0 \textrm{ on } A_{\ve'}\\
 u=\phi \textrm{ on } \delta (A_{\ve'}^c)
\end{array}
\right.
\]

Define then the function  $W$ on $\Vr$ as:
\[
\left\{\begin{array}{l}
W(x) = u(x) \textrm{ on } A_{\ve'}\\
W(x)=\phi(x) \textrm{ on } A_{\ve'}^c\\
      \end{array}
\right.
\]
Clearly, $\Delta_m W(x)=0$ for   $ x\in A_{\ve'}$ and $ \Delta_m W(x) \geq (\lambda^0_{ess}-\ve)W(x)$ for  $x\in  \textrm{Int}\, (A_{\ve'}^c)$.
It remains to look at the points $x$ in $ \delta (A_{\ve'}^c)$. Let $x\in  \delta (A_{\ve'}^c)$.
For $y\in  A_{\ve'}^c$, we have $W(y)=\phi(y)$ and for $y\in \delta A_{\ve'}$, since $\delta A_{\ve'} \subset B_K^c \cap A_{\ve'}=\{x\in B_K^c, \phi(x)> \ve' \}$,  by a maximum principle for harmonic functions 
$W(y)=u(y) \leq \ve' \leq \phi(y)$, therefore
\[
\tilde \Delta_m W(x) \geq \tilde \Delta_m \phi(x) = \tilde \Delta_m^{B_K^c} \phi(x) \geq  (\lambda^0_{ess}-\ve) \phi(x)=  (\lambda^0_{ess}-\ve) W(x).
\]

Let us turn to the proof of the last point. With the same construction as above,
 there exist $N\geq 1$ and  $\phi^\N>0$ on $\lint N+1,+\infty \rint$ such that, for $n\geq N+1$, $\Delta_{\Gr_\N}^{\lint N+1,+\infty \rint} \phi^\N (n) \geq
 (\lambda^0_{ess}-\ve)\phi^\N(n)$. Since $m(\N)=+\infty$, $\phi^\N$ can not be non-decreasing.
  Therefore there exists $n_0\geq N+1$, such that $\phi^\N(n_0+1)\leq \phi^\N(n_0)$. 
We can now perform the  cut and paste procedure by taking $W$ to be the function
\[
\left\{\begin{array}{l}
W(n) = \phi^\N(n_0) \textrm{ for } n\in \lint 0,n_0 \rint \\
W(n)=\phi^\N(n) \textrm{ for  } n\geq n_0+1.\\
      \end{array}
\right.
\]
Clearly, $W$ is the  desired super-harmonic function.
\end{proof}

\begin{remark}
 Note that, in the above proof, since $\phi^\N$ can not have local minimum, if we have $\phi^\N(n+1)\leq \phi^\N(n)$ for some $n$, then $\phi_N$ is non-increasing on $\lint n,+\infty)$.
\end{remark}

\section{Harnack inequality and limiting procedures}\label{s:7}
In this section, we recall how to obtain a super-solution on the entire set of vertices $\Vr$ given  a sequence of super-solution defined on a exhaustive sequence of finite sets. 
We recall that the graph is supposed to be connected.
The results of this section  are taken from \cite{HK}. The only difference is that, here, we consider a non-negative function $\lambda$ in place of a constant. The proofs adapt 
straightforwardly  and will not be presented.

First we begin by the Harnack inequality for non-negative super-solutions.
\begin{theorem}
Let $\Wr \subset \Vr$ be a finite and connected set. Let $\lambda: \Vr\to \R$ be a non-negative function. 
There exists a constant $C_\Wr$ such that for all  non-negative function $W: \Vr \to [0,_+\infty)$ satisfying
$(\Delta  -\lambda(x)) W(x) \geq 0$ for all $x\in \Wr$, we have
\[
\max_{x\in \Wr} W(x) \leq C_\Wr \min_{x\in \Wr} W(x).
\]
\end{theorem}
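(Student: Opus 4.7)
The plan is to extract a local pointwise comparison from the super-solution inequality and then chain it along paths in the connected set $\Wr$. The non-negativity of both $\lambda$ and $W$ is crucial: it allows us to discard the potential term harmlessly, so the resulting constant is purely geometric and independent of $\lambda$ (and of $W$).

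First, I would rewrite the hypothesis $(\tilde\Delta_m - \lambda(\cdot)) W(x) \geq 0$ for $x \in \Wr$ as
\[
\eta(x) W(x) \geq \sum_{y \sim x} \Er(x,y) W(y) + m(x)\lambda(x) W(x).
\]
Since $W, \lambda \geq 0$, dropping the last term and retaining a single neighbor $y$ on the right yields, for every $x \in \Wr$ and every $y \in \Vr$ with $y \sim x$,
\[
W(y) \leq \frac{\eta(x)}{\Er(x,y)} W(x).
\]
This is the key edgewise comparison. Applying it with $x$ and $y$ interchanged (valid when both lie in $\Wr$), we get for any edge with both endpoints in $\Wr$,
\[
\frac{\Er(x,y)}{\eta(y)} W(x) \;\leq\; W(y) \;\leq\; \frac{\eta(x)}{\Er(x,y)} W(x).
\]

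Next I would use connectedness of $\Wr$. For any $x, x' \in \Wr$, choose a path $x = z_0 \sim z_1 \sim \cdots \sim z_k = x'$ lying entirely in $\Wr$. Iterating the above edgewise bound gives
\[
W(x') \;\leq\; \prod_{i=1}^{k} \frac{\eta(z_{i-1})}{\Er(z_{i-1}, z_i)}\, W(x).
\]
Since $\Wr$ is finite, we may define
\[
C_\Wr := \max_{x, x' \in \Wr} \;\min_{\gamma \subset \Wr \text{ path from } x \text{ to } x'} \;\prod_{i=1}^{k(\gamma)} \frac{\eta(z_{i-1})}{\Er(z_{i-1}, z_i)},
\]
which is a finite constant depending only on the graph structure on $\Wr$ (and not on $W$ or $\lambda$). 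Choosing $x$ where $W$ attains its minimum on $\Wr$ and $x'$ where it attains its maximum yields $\max_{\Wr} W \leq C_\Wr \min_{\Wr} W$.

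There is essentially no obstacle; the argument is the standard one of \cite{HK}. The only adjustment required for this paper's slight generalization is the observation that replacing a constant $\lambda$ by a non-negative function $\lambda(\cdot)$ changes nothing, because the term $m(x)\lambda(x) W(x)$ has the \emph{same} sign as the one we wish to discard. Hence the same constant $C_\Wr$ works uniformly over the class of non-negative $W$ satisfying $(\tilde\Delta_m - \lambda(\cdot))W \geq 0$ on $\Wr$, for any non-negative $\lambda$.
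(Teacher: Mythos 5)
Your proof is correct and is exactly the standard chaining argument of \cite{HK} that the paper itself invokes without reproducing (the authors state the proofs ``adapt straightforwardly'' from the constant-$\lambda$ case). Your key observation --- that the term $m(x)\lambda(x)W(x)$ has the right sign to be discarded, so the edgewise bound $W(y)\leq \frac{\eta(x)}{\Er(x,y)}W(x)$ and hence the constant $C_\Wr$ are unaffected by replacing a constant $\lambda$ with a non-negative function --- is precisely the adaptation the paper has in mind.
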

As Corollary we obtain: 

\begin{corollary}
Let $\Wr \subset \Vr$ be a connected set. Let $x_0\in \Vr$ and let $\lambda: \Vr\to \R$ be a non-negative function. 
 For all $x\in \Wr$, there exists a constant $C_x:=C_x(x_0,\Wr)$ such that for all  
 non-negative $W: \Wr \to [0,+\infty)$ satisfying $W(x_0)=1$ and
$(\Delta -\lambda(x)) W(x) \geq 0$ for all $x\in \Wr$, we have
\[
C_x ^{-1} \leq W(x) \leq C_x.
\]
 \end{corollary}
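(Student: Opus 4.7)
The statement is a pointwise bound, with the constant $C_x$ allowed to depend on $x$, $x_0$, and $\Wr$. My plan is therefore to localize: for each individual $x\in\Wr$, I will isolate a \emph{finite} connected subset $\Wr_x\subset\Wr$ containing both $x_0$ and $x$, and then invoke the preceding Harnack theorem on this finite set. The flexibility in the choice of $\Wr_x$ is what makes the passage from the theorem (finite $\Wr$) to the corollary (only connected $\Wr$) harmless.

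Concretely, fix $x\in\Wr$. Since $\Wr$ is connected, there exists a path in $\Wr$ joining $x_0$ to $x$, i.e.\ a finite sequence of vertices $x_0=y_0\sim y_1\sim\cdots\sim y_n=x$ with each $y_i\in\Wr$. Define $\Wr_x:=\{y_0,y_1,\dots,y_n\}$. By construction $\Wr_x$ is a finite connected subset of $\Wr$ containing both $x_0$ and $x$. Since $W:\Wr\to[0,+\infty)$ is non-negative and satisfies $(\Delta-\lambda(\cdot))W(y)\geq 0$ for every $y\in\Wr\supset\Wr_x$, the hypotheses of the preceding Harnack theorem are met on $\Wr_x$. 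Applying it produces a constant $C_{\Wr_x}>0$, depending only on $\Wr_x$ (hence only on $x_0$, $x$, and $\Wr$), such that
\[
\max_{y\in\Wr_x}W(y)\leq C_{\Wr_x}\min_{y\in\Wr_x}W(y).
\]

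Finally, since $x_0\in\Wr_x$ and $W(x_0)=1$, one has $\min_{y\in\Wr_x}W(y)\leq 1\leq\max_{y\in\Wr_x}W(y)$. Combined with $x\in\Wr_x$, this yields simultaneously
\[
W(x)\leq\max_{y\in\Wr_x}W(y)\leq C_{\Wr_x}\min_{y\in\Wr_x}W(y)\leq C_{\Wr_x}
\]
and
\[
W(x)\geq\min_{y\in\Wr_x}W(y)\geq C_{\Wr_x}^{-1}\max_{y\in\Wr_x}W(y)\geq C_{\Wr_x}^{-1}.
\]
Setting $C_x:=C_{\Wr_x}$ gives the asserted two-sided bound. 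There is no genuine obstacle in the argument; the only point to be attentive to is that the path $\Wr_x$ depends on $x$, but this is permitted since the constant $C_x$ is allowed to depend on $x$. The Harnack theorem itself carries all the analytic content.
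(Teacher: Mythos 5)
Your proof is correct and is the standard way to promote a finite Harnack inequality to a pointwise two-sided bound on a connected set: fix a path in $\Wr$ from $x_0$ to $x$, take the (finite, connected) set of its vertices, and invoke the finite-set Harnack theorem there, using $W(x_0)=1$ to anchor the bound. The paper itself gives no proof (it states that the results of this section are carried over from \cite{HK} and the arguments adapt straightforwardly), but the chaining argument you present is precisely the standard derivation, so the two approaches coincide. One small remark: both the theorem and the corollary as stated in the paper implicitly require $W$ to be defined on the neighbors of $\Wr$ (and the corollary requires $x_0\in\Wr$ for $W(x_0)$ to make sense), and your argument inherits the same implicit conventions; this is not a gap on your part but an imprecision already present in the statements.
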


\begin{remark}
 Obviously, the last corollary can be used with $\Wr=\Vr$. 
\end{remark}

We now turn to the main result of this section.

\begin{theorem}\label{global-super-sol}
Let $x_0\in \Vr$ and let $\lambda: \Vr\to \R$ be a non-negative function. Let $(\Wr_n)_n$ be an exhausting sequence of $\Vr$. Assume that there exists a sequence of non-negative functions $W_n: \Wr_n\to [0,+\infty)$ 
satisfying $W_n(x_0)=1$ and $ (\Delta _n -\lambda(x) )W_n(x)\geq 0$ 
(respectively $ (\Delta _n -\lambda(x) )W_n(x)=0$) for all $x\in \Wr_n$.  Then there exists a positive function $W:\Vr\to (0,+\infty)$ such that 
$ (\Delta _n -\lambda(x) )W(x)\geq 0$ 
(respectively $ (\Delta _n -\lambda(x) )W(x)=0$) for all $x\in \Vr$.
\end{theorem}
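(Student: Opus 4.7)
The plan is to combine the uniform Harnack estimates from the preceding corollary with a diagonal extraction, and then pass to the limit using the local finiteness of $\Gr$.

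First, I would fix an arbitrary vertex $x \in \Vr$ and choose a connected set $\Wr$ containing both $x_0$ and $x$. Since $(\Wr_n)_n$ is exhaustive, $\Wr \subseteq \Wr_n$ for all $n$ large enough, so each $W_n$ is a non-negative function on $\Wr$ satisfying $(\tilde\Delta_m - \lambda(\cdot))W_n \geq 0$ on $\Wr$ and $W_n(x_0) = 1$. The Harnack corollary then gives a constant $C_x = C_x(x_0, \Wr)$, independent of $n$, such that
\[
C_x^{-1} \leq W_n(x) \leq C_x
\quad\text{for all sufficiently large } n.
\]

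Next, I would perform a standard diagonal extraction. Enumerate $\Vr = \{y_1, y_2, \ldots\}$. The sequence $(W_n(y_1))_n$ is eventually bounded in the compact interval $[C_{y_1}^{-1}, C_{y_1}]$, so we can extract a subsequence along which it converges. Extracting successively for $y_2, y_3, \ldots$ and taking the diagonal subsequence $(W_{n_k})_k$ yields a limit $W : \Vr \to (0, +\infty)$ with $W_{n_k}(y) \to W(y)$ pointwise for every $y \in \Vr$. The Harnack bound forces $W(y) \geq C_y^{-1} > 0$, so $W$ is strictly positive; in particular $W(x_0) = 1$.

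Finally, I would verify that $W$ inherits the (super-)harmonicity. Fix any $x \in \Vr$; since $\Gr$ is locally finite, the quantity
\[
\tilde\Delta_m W_{n_k}(x) - \lambda(x) W_{n_k}(x)
= \frac{1}{m(x)}\sum_{y \sim x}\Er(x,y)\bigl(W_{n_k}(x) - W_{n_k}(y)\bigr) - \lambda(x)W_{n_k}(x)
\]
is a finite linear combination of values of $W_{n_k}$, so it converges as $k \to \infty$ to $\tilde\Delta_m W(x) - \lambda(x) W(x)$. For $k$ large enough, $x \in \Wr_{n_k}$, hence each term of the sequence is $\geq 0$ (resp.\ $= 0$), so the limit satisfies $(\tilde\Delta_m - \lambda(x))W(x) \geq 0$ (resp.\ $= 0$). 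This yields the claimed global super-solution (resp.\ solution).

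The main obstacle is purely bookkeeping: ensuring that the Harnack constant $C_x$ can be chosen independently of $n$. This is handled by applying the corollary on a \emph{fixed} connected set containing $x_0$ and $x$, and then using exhaustiveness so that $W_n$ restricts to a super-solution on that set for all large $n$; once this is in place, the diagonal argument and pointwise passage to the limit (justified by local finiteness) are routine.
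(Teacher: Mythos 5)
Your proof is correct and follows the standard Harnack-plus-diagonalization strategy; in fact the paper does not write out a proof of this theorem at all (it defers to Haeseler--Keller, remarking only that the argument ``adapts straightforwardly'' to a non-constant $\lambda$), and your argument is precisely the one used there. The only point worth flagging for rigor is a minor one you handle implicitly: when you invoke the Harnack corollary for a fixed finite connected set $\Wr\ni x_0,x$, you also need the one-step neighborhood of $\Wr$ to lie inside $\Wr_n$ so that $\tilde\Delta_m W_n$ at points of $\Wr$ is computed from values that $W_n$ actually takes and so that the super-solution inequality at those points uses the full (not Dirichlet-truncated) Laplacian; this is automatic for $n$ large since $(\Wr_n)_n$ is an increasing exhaustion and $\Gr$ is locally finite, but it is worth saying explicitly.
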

\section{Probabilistic representation of positive super-harmonic
  functions}\label{s:8}  

In the classical situation of Poincar\'e inequality, there is a strong
link between the linear Lyapunov functions and the hitting times of
some compact sets for a stochastic process, see \cite{CGZ}. Here we
develop an analogy of these results. In all this section, $(\Wr_n)_{n
  \geq 0}$ will denote an exhaustive sequence of $\Vr$, see Definition
\ref{d:exhau}.   
 
\subsection{Discrete and continuous time Markov chains}   
In this section, we present the Markov processes whose generator is
given by (minus) the Laplacian on the graph.  
In the case of a general weighted Laplacian,  we can associate a
continuous time Markov chain. 
In the case of the normalized Laplacian, we can associate both  a
continuous time and  a discrete time Markov chain. 
More details about the construction and the properties of these Markov
process can be found in the monograph \cite{Nor}.

\subsubsection{The discrete time Markov chain associated to $\Delta_\eta$}\label{s:random-walk}
We begin by the simplest case of the normalized Laplacian. Consider
the Markov chain $(X_k^{x_0})_{k\geq 1}$ starting in $x_0$ on the
graph whose transition probabilities are given 
by  
\[p(x,y):= \frac{\Er(x,y)}{\eta(x)},\]
for all $x,y\in \Vr$. Then, set $Pf(x) =\sum_y p(x,y) f(y)$ for all
$f\in \ell^\infty(\Vr)$. For $k\geq 0$ and $x \in \Vr$, one has 
\[
 P^kf(x) =\E\left[ f(X_k^x) \right].
\] 
The generator
 of the above discrete  time Markov chain
random walk is given by $P-\Id$ and then equals  $-\Delta_\eta$; that is for $f\in \ell^\infty(\Vr)$,
\[
 \E\left[ f(X_1^x) \right] -f(x) = -\Delta_\eta f(x), \quad x\in \Vr.
\]
The measure $\eta$ satisfies $\eta(x) p(x,y) =\eta(y) p(y,x)$ for $x,y \in \Vr$. It is
 symmetric 
 (and hence
invariant) for the Markov chain. 

\subsubsection{The continuous time Markov chain associated to $\Delta_m$}\label{s:markov-process}
Now we turn to the general case.
With the above notation, the Laplacian $-\Delta_m$ can be written as
\[
\Delta_m f (x) = \deg(x) \sum_y p(x,y) (f(x)-f(y)). 
\]

We construct here the \emph{minimal} right continuous Markov chain
$(X_t)_{t\geq 0}$ associated to $-\Delta_m$. It corresponds to the
process  killed at infinity. We denote by $e(X)$ its explosion time
(recall that $X$ depends on the choice of the initial law). 
We recall two useful constructions of the continuous time Markov
chain   when the initial law is $\delta_{x_0}$.  We denote it by
$(X_t^{x_0})_{t\geq 0}$.

First we can construct $(X_t^{x_0})_{t\geq 0}$ as follows: At time
$t=0$, $X_0^{x_0}=x_0$. It stays in $x_0$ during an exponential random
time of parameter $\deg(x_0)$ and then jumps in a point
$y$ chosen with probability $p(x_0,y)$. We  then iterate this procedure.

Another useful equivalent construction of the process $(X_t^{x_0})_{t\geq
  0}$ is the following. At time $t=0$, $X_0^{x_0}:=x_0$. For each,
neighbor $y$ of $x_0$, we let $E_y$ be an independent exponential
random clock variable of parameter $\deg(x_0,y)$.  
Consider $T:=\min \{E_y, y\sim x_0\}$. Let $z$ be the neighbor of
$x_0$ such that $E_z=\min \{E_y, y\sim x_0\}$, $z$ is unique almost
surely.  
We set $X_t:=x_0$ for $0\leq t<T$, $X_T:=z$ and repeat this construction.

Using using the memorylessness property of the exponential
distribution and Lemma \ref{lemma-exp1} below, it is easy that both
constructions are equivalent and that   $(X_t^x)_{t\geq 0}$ is a
Markov process. Moreover, the jump chain associated to $(X_t^x)_{t\geq
  0}$ is the discrete time Markov chain of generator $-\Delta_\eta$. 

\begin{lemma}\label{lemma-exp1}
 Let $(E_i)_{1\leq i\leq n}$ be $n$ independent exponential random  variables of parameter $c_i >0$, then the variable  
$\min \{E_i,1\leq i\leq n\}$ is also an exponential  random variable
of parameter $c_1+\dots + c_n$.  
Moreover, for all $1\leq r \leq n$ we have:
\[\PR \left(\min \{E_i,1\leq i\leq n\}= E_r\right)= \frac{c_r}{c_1+\dots +c_n}.\] 
\end{lemma}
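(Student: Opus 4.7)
The plan is to establish both assertions by computing the relevant survival functions using the independence of the $E_i$ and the memoryless/exponential structure. The argument is classical and involves no real obstacle; the main issue is just being careful about the event $\{\min E_i = E_r\}$, which must be interpreted as $\{E_r < E_i \text{ for all } i\neq r\}$, an event that differs from the former by a set of probability zero since each $E_i - E_j$ has a continuous distribution.

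For the first assertion, I would begin by writing, for $t\geq 0$,
\[
\PR\bigl(\min\{E_i, 1\leq i\leq n\} > t\bigr) = \PR\bigl(\forall i,\ E_i > t\bigr) = \prod_{i=1}^n \PR(E_i > t) = \prod_{i=1}^n e^{-c_i t} = e^{-(c_1+\dots+c_n)t},
\]
where the second equality uses independence and the third the exponential law of each $E_i$. This directly identifies the law of the minimum as exponential with parameter $c_1+\dots+c_n$.

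For the second assertion, I would condition on the value of $E_r$ and integrate. Since the density of $E_r$ is $c_r e^{-c_r t}\bone_{t\geq 0}$ and the other $E_i$ are independent of $E_r$,
\[
\PR\bigl(E_r < E_i,\ \forall i\neq r\bigr) = \int_0^\infty c_r e^{-c_r t}\prod_{i\neq r}\PR(E_i > t)\,dt = \int_0^\infty c_r e^{-(c_1+\dots+c_n)t}\,dt = \frac{c_r}{c_1+\dots+c_n}.
\]
Since the $E_i$ admit joint densities with respect to Lebesgue measure, the event $\{E_r < E_i,\ \forall i\neq r\}$ coincides almost surely with $\{\min_i E_i = E_r\}$, yielding the claimed formula. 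The two steps together give the full statement, and as a sanity check the probabilities sum to $1$, consistent with the fact that almost surely the minimum is attained at a unique index.
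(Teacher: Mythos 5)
Your proof is correct and is the standard textbook argument. The paper itself does not supply a proof of Lemma~\ref{lemma-exp1}; it is stated as a classical fact about exponential distributions (and indeed, the surrounding text simply refers to the memorylessness property and standard Markov chain constructions, citing~\cite{Nor}). Both of your computations — the survival function of the minimum factoring by independence, and the conditioning on $E_r$ with an integral against its density — are exactly the canonical derivation, and your remark that $\{\min_i E_i = E_r\}$ agrees almost surely with $\{E_r < E_i,\ \forall i\neq r\}$ is the right way to handle the a.s.\ uniqueness of the minimizer. There is nothing to compare against in the paper, and no gap in what you wrote.
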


The next lemma concerns also the memorylessness property of the exponential distribution. It will be useful to add some ``artificial jumps'' in the construction of the process $(X_t)_{t\geq 0}$.

\begin{lemma}\label{lemma-exp3} Let $n\geq 1$ and $c_1,\dots,c_n>0$.
 Let $(E_{i,j})_{i\geq1, 1\leq j \leq n}$ be  independent exponential
 random  variables such that the   parameter of $E_{i,j}$ is
 $c_j$. Let $(A_i)_{i\geq1}$ be  independent  random  variables such
 that almost surely  
\begin{equation}\label{cond-A-infty}A_i> 0 \textrm{ and } \sum_{i=1}^\infty A_i =+\infty
\end{equation}
Let $k$ be defined by 
\[
k:=\inf\{i\geq 1, \min(E_{i,1},\dots,E_{i,n},A_i)\neq A_i\}
\]
Then $k$ is finite almost surely and the random variable 
$B:=A_1+\dots+A_{k-1}+ 
\min(E_{k,1},\dots,E_{k,n})$ is also an exponential
random  variable of parameter $c:=c_1+\dots+c_n$. 
Moreover for all $1\leq r\leq n$, we have:
\begin{align*}
\PR \left( \min(E_{k,1},\dots,E_{k,n})=E_{k,r} \right) &=  \PR \left(
  \min(E_{1,1},\dots,E_{1,n})=E_{1,r} \right)
= \frac{c_r}{c_1+\dots +c_n}.
\end{align*}
\end{lemma}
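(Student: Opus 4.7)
The plan is to set $M_i := \min(E_{i,1},\dots,E_{i,n})$, which by Lemma \ref{lemma-exp1} are i.i.d.\ exponential variables of parameter $c:=c_1+\cdots+c_n$, and are independent of $(A_i)_{i\geq 1}$. Since each $M_i$ is continuous, $M_i\neq A_i$ almost surely, so one may rewrite $k=\inf\{i\geq 1:M_i<A_i\}$. First I will show $k<\infty$ a.s.\ by conditioning on $(A_i)$ and computing
\[
\PR(k=\infty\mid (A_i))=\prod_{i\geq 1}\PR(M_i\geq A_i\mid A_i)=\exp\Bigl(-c\sum_{i=1}^\infty A_i\Bigr)=0,
\]
thanks to $\sum A_i=+\infty$ a.s.\ and the independence of $(M_i)$ from $(A_i)$.

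Next, to identify the law of $B$, I will compute $\PR(B>t\mid (A_i))$ for fixed $t\geq 0$. Writing $T_j:=A_1+\cdots+A_j$, let $k_0$ be the (a.s.\ unique) index with $T_{k_0-1}\leq t<T_{k_0}$. Splitting on the value of $k$ and exploiting the independence of $M_\kappa$ from $(M_i)_{i\neq\kappa}$,
\[
\PR(k=\kappa,\,B>t\mid (A_i))=e^{-cT_{\kappa-1}}\,\PR\bigl(\max(t-T_{\kappa-1},0)<M_\kappa<A_\kappa\mid (A_i)\bigr),
\]
which vanishes for $\kappa<k_0$, equals $e^{-ct}-e^{-cT_{k_0}}$ for $\kappa=k_0$, and equals $e^{-cT_{\kappa-1}}-e^{-cT_\kappa}$ for $\kappa>k_0$. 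Summing over $\kappa$ and telescoping (using $T_\kappa\to\infty$ a.s.) yields $\PR(B>t\mid (A_i))=e^{-ct}$, so $B$ is exponential of parameter $c$ and independent of $(A_i)$.

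For the last identity, I will invoke the standard decomposition for independent exponentials: the index $I_i:=\arg\min_j E_{i,j}$ and the value $M_i$ are independent, and $\PR(I_i=r)=c_r/c$ by Lemma \ref{lemma-exp1}. Conditioning on $(A_i)$, the event $\{k=\kappa\}$ depends on $(E_{\kappa,j})_{j}$ only through $\{M_\kappa<A_\kappa\}$; by the above independence, this conditioning does not affect the law of $I_\kappa$, so $\PR(I_\kappa=r\mid k=\kappa,(A_i))=c_r/c$. Summing over $\kappa$ and integrating in $(A_i)$ gives the stated probability. The main obstacle is the careful bookkeeping of the conditioning in the telescoping step; the underlying probabilistic content is essentially the memorylessness of the exponential distribution together with Lemma \ref{lemma-exp1}.
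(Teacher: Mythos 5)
Your proof is correct and complete. The paper does not actually give a proof of Lemma \ref{lemma-exp3}: it is asserted as a consequence of the memorylessness of the exponential distribution together with Lemma \ref{lemma-exp1}, so there is no paper argument to compare against. Your direct computation of the conditional survival function $\PR(B>t\mid (A_i))$, splitting over the value of $k$ and telescoping (using $T_\kappa\to\infty$ a.s.\ to kill the boundary term), is a clean way of packaging the memorylessness into a single integrated identity, and the verification that the answer $e^{-ct}$ does not depend on $(A_i)$ also yields independence of $B$ from $(A_i)$ for free. The last identity is handled correctly via the standard fact that for independent exponentials the index of the minimum $I_\kappa$ is independent of the value $M_\kappa$, so conditioning on $\{k=\kappa\}$ (which, given $(A_i)$ and the earlier $M_i$'s, involves $(E_{\kappa,j})_j$ only through $\{M_\kappa<A_\kappa\}$) does not alter the law of $I_\kappa$.

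One minor remark: the lemma as written only asserts that the $(E_{i,j})$ are independent among themselves and that the $(A_i)$ are independent among themselves; your argument, as it must, also uses that the family $(A_i)$ is independent of the family $(E_{i,j})$. This is plainly the intended hypothesis (and the one used in the paper's application), and you make it explicit at the start, which is the right thing to do.
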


The Lemma \ref{lemma-exp3} allows us to add some ``artificial jumps'' in the construction of the Markov process $(X_t)_{t\geq 0}$. Indeed, it implies  that we can also construct $(X_t)$ as follows:
If at time
$t$, $X_t=x$, then as before for each neighbor $y$ of $x$,  we let $E_y$ be an independent exponential random clock variable of parameter $\deg(x,y)$. We let also $E_x$ be another independent exponential random clock variable. Let $\deg(x,x)$ be its parameter.
Consider $\tilde T:=\min \{E_y, y\sim x \textrm{ or } y=x\}$. Let $z$ be the  unique vertex such that $E_z=\min \{E_y, y\sim x \textrm{ or } y=x\}$.
We set $X_s:=x$ for $t\leq s<t+\tilde T$, $X_{\tilde T}:=z$ and repeat this construction.
Moreover at each step, the choice of the parameter $\deg(x,x)$ can change (with the restriction that it has to satisfy condition \eqref{cond-A-infty}).
The only difference with the previous construction is that $\tilde T$ does not really correspond anymore to a physical jump of the process.

This modification of the construction will be useful in the coupling
arguments of section \ref{s:10}.

In the above constructions, the sequence of the (random) times of the  jumps of the Markov process $X$ is increasing, thus has a limit in $(0,\infty]$.
This limit is called the \emph{explosion time} of  Markov process $X$ and is denoted by
$e(X)$.

We can now associate a \emph{continuous time semigroup} $P_t$ for $f\in
\ell^\infty(\Vr)$ by 
\[
 P_tf(x):= \E\left [ f(X_t^x) \bone_{\{t<e(X^x)\} }\right], \quad t\geq 0.
\]
For $t$ small,  using exponential distributions, it is easy and well-known to compute  explicitly the first order expansion of  law of $X_t^x$. One gets
\[\begin{array}{l}
 \PR(X_t^x =x) = 1- \deg (x) t + o(t)\\
 \PR(X_t^x =y) = \deg (x) p(x,y) t +o(t), \textrm{ for } y\neq x.
 \end{array}
\]
In particular for $f\in \ell^\infty(\Vr)$,  the following    pointwise convergences hold: 
\[
 \lim_{t\to 0^+}  P_t f (x) = f (x)  
\]
and 
\[
 \lim_{t\to 0^+} \frac{P_t f(x)-f(x)}{t}= - \tilde \Delta_{\Gr,m} f(x).
\]

Actually, in the sequel, we only need to consider the above  Markov process stopped outside a finite set.
Let $B $ be a finite subset of $\Vr$ and let 
\[T_{B^c}:= \inf\{t\geq 0, X_t \in B^c\}\]
  the
\emph{hitting time} of the set $B^c$ for the continuous time Markov
chain   $(X_t)_{t\geq0}$. Clearly, since each connected component of $\Gr$ is infinite and $B$ is finite, by classical result on transience, $T_{B^c}$ is almost surely finite. Moreover, we also have
$T_{B^c}<e(X)$.
We can now define a new continuous time  semigroup $P_t^{D_B}$ for $f:\Vr \to \R$ by
\[
 P_t^{D_B}f(x):= \E\left [ f(X_{t\wedge T_{B^c}}^x)\right], \quad t\geq
 0, \quad x\in \Vr.
\]
A computation similar to the above shows that  for  $f:\Vr \to \R$ and $ x\in \Vr$, pointwise,  
 \[
 \lim_{t\to 0^+} \frac{P_t^{D_B} f(x)-f(x)}{t}=\left\{ \begin{array}{ccl}
                                               - \tilde \Delta_{\Gr,m} f(x) &\textrm{ if } & x\in B\\
                                               0 &\textrm{ if } & x\in B^c.\\
                                             \end{array}\right.
\]
It is not symmetric on $\Cc_c(\Vr)$. Indeed its generator can be written as
$-\Pi_B \Delta$ where $\Pi_B$ is the projection defined by $\Pi_B
f(x):= f(x) \bone_B(x)$, for all $f:\Vr \to \R$.

For $f\in \ell^\infty(\Vr)$ we could also define the semigroup: 
\[
 P_t^{D'_B}f(x):= \E\left [ f(X_t^x) \bone_ {\{t< T_{B^c}\}} \right], \quad t\geq 0.
\]
It corresponds to the usual Dirichlet semigroup. As before, one can
compute that its generator is: $-\Pi_B \Delta \Pi_B$. More precisely,
for all $f:\Vr \to \R$, pointwise , one has: 
\[
 \lim_{t\to 0^+} \frac{P_t^{D'_B} f(x)-f(x)}{t}=  -\Pi_B \Delta \Pi_B
 f(x).                                           
\]

\subsection{The normalized Laplacian in the discrete time setting} 
For simplicity, we begin with the case of 
 the normalized Laplacian $\Delta_\eta$.
Actually, Theorem \ref{hitting2} below can also be seen as a corollary
of the general Theorem \ref{hitting-cont}. A direct proof is included
for the reader more familiar with Markov chains than continuous time
Markov chains. 

\begin{theorem} \label{hitting2} 
Let $\lambda:\Vr \to [0,1)$ and let $\Lambda(x):=\frac{1}{1-\lambda(x)}$.
The  following assertions are equivalent:
\begin{enumerate}[{\rm (i)}]
\item\label{global-egal} There exists a  positive function $W$ on
  $\Vr$ such that $\tilde \Delta_\eta W (x)=  \lambda(x) W(x)$ for all
  $x \in \Vr$.  
\item \label{global-geq} There exists a  positive function $W$ on
  $\Vr$ such that $\tilde \Delta_\eta W (x)\geq  \lambda(x) W(x)$ for
  all  $x \in \Vr$.  
\item \label{family} There exists a family of positive functions $W_N$
  on $\Vr$ such that $\tilde \Delta_\eta W_N (x)\geq  \lambda(x)
  W_N(x)$ for all $ x\in \Wr_N$. 
\item \label{rep-proba} For all $N\geq 1$ and all $x\in \Vr$, we have 
\begin{equation}\label{eq-rep-proba}
 \E\left[\prod_{k=0}^{T_N-1} \Lambda(X_k^x)\right]  < + \infty,
\end{equation}
where $T_N:=T_N^x:= \inf \{n \geq 0, X_n^x \in \Wr_N^c \}$ is the hitting
time of the set $\Wr_N$  for $X_n^x$ the Markov chain on $\Gr$ starting
in $x$ and whose 
generator is $- \Delta_\eta$. 
\end{enumerate}
\end{theorem}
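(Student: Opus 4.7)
The plan is to close the cycle $\mathrm{(i)}\Rightarrow\mathrm{(ii)}\Rightarrow\mathrm{(iii)}\Rightarrow\mathrm{(iv)}\Rightarrow\mathrm{(i)}$. The first two arrows are immediate: $\mathrm{(i)}\Rightarrow\mathrm{(ii)}$ because equality implies inequality, and $\mathrm{(ii)}\Rightarrow\mathrm{(iii)}$ by taking the constant family $W_N:=W$ for every $N$. The remaining two are genuinely probabilistic. A useful preliminary observation is that, writing $Pf(x):=\sum_y p(x,y)f(y)$, one has $\tilde\Delta_\eta W(x)=W(x)-PW(x)$, so the pointwise super-harmonicity $\tilde\Delta_\eta W(x)\geq\lambda(x)W(x)$ is equivalent to the transition inequality
$$\Lambda(x)\,PW(x)\leq W(x).$$

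For $\mathrm{(iii)}\Rightarrow\mathrm{(iv)}$, fix $N\geq 1$ and $x\in\Vr$ and form the process
$$M_n:=\prod_{k=0}^{n-1}\Lambda(X_k^x)\cdot W_N(X_n^x),\qquad M_0:=W_N(x).$$
The Markov property combined with the displayed transition inequality (valid on $\Wr_N$) shows that $(M_{n\wedge T_N})_{n\geq 0}$ is a non-negative $(\mathcal{F}_n)$-supermartingale. Since $\Wr_N$ is finite and each connected component of $\Gr$ is infinite, the chain is irreducible and hence $T_N<\infty$ almost surely; optional stopping together with Fatou's lemma then gives
$$\E\!\left[\prod_{k=0}^{T_N-1}\Lambda(X_k^x)\,W_N(X_{T_N}^x)\right]\leq W_N(x).$$
Now $X_{T_N}^x$ lies in the finite set $\partial(\Wr_N^c)=\{y\in\Wr_N^c:y\sim\Wr_N\}$, on which $W_N>0$, so dividing by $\min_{y\in\partial(\Wr_N^c)}W_N(y)>0$ yields the finiteness asserted in $\mathrm{(iv)}$.

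For $\mathrm{(iv)}\Rightarrow\mathrm{(i)}$, I would define explicitly
$$W_N(x):=\E\!\left[\prod_{k=0}^{T_N^x-1}\Lambda(X_k^x)\right]\in[1,+\infty),\qquad x\in\Vr,$$
finiteness being exactly hypothesis $\mathrm{(iv)}$ and the lower bound $W_N\geq 1$ following from $\Lambda\geq 1$. If $x\in\Wr_N^c$, then $T_N^x=0$, so $W_N\equiv 1$ on $\Wr_N^c$. If $x\in\Wr_N$, then conditioning on the first step $X_1^x=y$ and using the strong Markov property to identify the residual hitting time with $T_N^y$ gives
$$W_N(x)=\Lambda(x)\sum_y p(x,y)\,W_N(y)=\Lambda(x)\,PW_N(x),$$
which rearranges exactly to $\tilde\Delta_\eta W_N(x)=\lambda(x)W_N(x)$ for every $x\in\Wr_N$. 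Choosing a basepoint $x_0\in\Wr_1$ and normalizing $\widetilde W_N:=W_N/W_N(x_0)$, Theorem~\ref{global-super-sol} in its equality version then produces a positive function $W$ on $\Vr$ satisfying $\tilde\Delta_\eta W=\lambda W$ globally, closing the cycle.

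The two delicate points are (a) controlling the supermartingale at the exit time in $\mathrm{(iii)}\Rightarrow\mathrm{(iv)}$, where the finiteness of $\Wr_N$ and of its boundary $\partial(\Wr_N^c)$ does the essential work by simultaneously ensuring $T_N<\infty$ almost surely and furnishing a strictly positive lower bound for $W_N$ on the exit locations; and (b) the extraction of a global solution in $\mathrm{(iv)}\Rightarrow\mathrm{(i)}$, which rests on the Harnack inequality of Section~\ref{s:7} to produce subsequential compactness for the normalized family $(\widetilde W_N)$, as packaged in Theorem~\ref{global-super-sol}. The rest is bookkeeping with the reversible transition kernel $p(x,y)=\Er(x,y)/\eta(x)$.
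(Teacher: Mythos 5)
Your proof is correct and follows essentially the same route as the paper: the cycle through (iv) with the first-step (Markov) identity for $W_N(x)=\E_x[A_{T_N}]$ and the Harnack-based Theorem~\ref{global-super-sol} to globalize, the only cosmetic difference being that you package the key estimate in (iii)$\Rightarrow$(iv) as optional stopping for the non-negative supermartingale $M_{n\wedge T_N}$, whereas the paper carries out the same computation by hand via an Abel transform. Your observation that $X_{T_N}$ lands in the finite boundary $\partial(\Wr_N^c)$, where $W_N$ is bounded below, is a correct (and slightly more careful) version of the paper's division by $\min_{\Wr_N}W_N$.
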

When the function $\lambda$ is constant, one has
$\Lambda:=\frac{1}{1-\lambda}$ and the function in \eqref{rep-proba} reads also 
$ \E_x[\Lambda^{ T_N}]:= \E[\Lambda^{ T_N^x}]$.

\begin{remark}
In this situation, one can not have $\lambda(x)\geq 1$ for some $x$
since for all positive function $W$ and all $x\in \Vr$, 
\[\tilde \Delta_\eta W (x) = \sum_{y} p(x,y)(W(x)- W(y))<\sum_{y}
p(x,y)W(x)=W(x).
\]
\end{remark}

\begin{proof}
Clearly,  \eqref{global-egal} implies \eqref{global-geq}.
The equivalence between  \eqref{global-geq} and \eqref{family} is
given by Theorem \ref{global-super-sol}. 
We now  show that \eqref{family} implies \eqref{rep-proba}. Set 
\[A_n:= \prod_{k=0}^{n-1} \Lambda(X_k^x), \mbox{ for } n\geq 1\]
and $A_0:=1$. Let $N\geq 1$, $x\in \Wr_N$ and $n\geq 0$, we have
\begin{align}\label{e:hitting2}
 \E_x \left[A_{n \wedge T_N} \right]\leq \frac{1} {\min\{W_N(x),x\in
   \Wr_N \}} \E_x \left[A_{n \wedge T_N} W_N (X_{n \wedge T_N
   }^x)\right]. 
\end{align}
Using the Abel transform $u_n v_n = u_0 v_0 + \sum_{k=0}^{n-1}
\left((u_{k+1}-u_k) v_{k+1} + u_k (v_{k+1}-v_k)\right)$, we get 
\begin{align*}
\E_x \left[A_{n \wedge T_N} W_N (X_{n \wedge T_N }^x)\right]
 =& \, W_N(x)+ \sum_{k=0}^{(n\wedge T_N) -1} \E_x \left[   A_{k+1}  \left( W_N (X_{k+1}^x) -W_N(X_k^x) \right)  \right]\\
& \,  + \sum_{k=0}^{(n\wedge T_N) -1}  \E_x \left[ (A_{k+1}-A_k) W_N(X_k^x) \right].\\
\end{align*}
The event  $A_{k+1}$ is measurable with respect to the $\sigma$-algebra $\sigma(X_1^x, \dots, X_k^x)$, thus
\begin{align*}
\E_x \left[     A_{k+1}  \left( W_N (X_{k+1}^x) -W_N(X_k^x) \right) \right]
&=
\\
&\hspace*{-2cm}= \, \E_x \left[  A_{k+1}  \E\left[  W_N (X_{k+1}^x) -W_N(X_k^x)  |
   \sigma(X_1^x, \dots, X_k^x) \right] \right]
\\
&\hspace*{-2cm}=\,  -\E_x \left[  A_{k+1}  \tilde\Delta_\eta W_N(X_k^x)   \right]
\end{align*}
and therefore
\begin{align*}
\E_x \left[A_{n \wedge T_N} W_N (X_{n \wedge T_N }^x)\right] 
&= W_N(x) + \sum_{k=0}^{(n\wedge T_N)-1}   \E_x \left[  - A_{k+1}  \tilde \Delta_\eta W_N (X_k^x)\right]  \\
 &\quad +\sum_{k=0}^{(n\wedge T_N)-1}   \E_x \left[ A_{k+1} \left(1-\frac{1}{\Lambda
       (X_k^x)}\right) W_N (X_k^x) \right]\\ 
 &\leq  W_N(x), 
\end{align*}
where  we have used $\tilde \Delta_\eta W_N(x) \geq \lambda (x)
W_N(x)=  \left(1-\frac{1}{\Lambda(x)} \right)W_N(x)$ for $x \in
\Wr_{N}$. Finally, since $T_N$ is almost surely finite and recalling 
\eqref{e:hitting2}, by letting $n\to \infty$ we obtain  \eqref{rep-proba}.

We turn to  \eqref{rep-proba} implies \eqref{global-egal}.
Set $U_N(x):= \E_x[ A_{T_N}]$ for $N\geq 1$. By hypothesis, it is
finite for  
all $N$ and all $x\in \Vr$. Let $x\in \Wr_{N}$, by Markov property, we have 
\[
U_N(x)=  \sum_{y\sim x} p(x,y) \Lambda(x) \E_y[A_{T_N^y}] = \Lambda(x)  \sum_{y,y\sim x} p(x,y)  U_N(y),
\]
thus
\begin{align*}
 \Delta_\eta U_N(x)&=  U_N(x) - \sum_{y,y\sim x} p(x,y) U_N(y)\\
           &= \left(1-\frac{1}{\Lambda (x)}\right)U_N(x) =\lambda(x) U_N (x).  
\end{align*}
Theorem \ref{global-super-sol} ends the proof.
\end{proof}

\subsection{The general case: the continuous time setting}
In the case of  a general weight $m$, we obtain the analogous of 
Theorem \ref{hitting2} for the continuous time Markov process associated
to $\Delta_m$. 
\begin{theorem}\label{hitting-cont}
Let $\lambda$ a non-negative function on $\Vr$.
 The following assertions are equivalent.
\begin{enumerate}[{\rm (i)}] 
\item \label{SHG-egal}  There exists a positive function $W$ such
  that: $\tilde \Delta_m W(x)= \lambda (x) W(x)$ for all $x\in \Vr$. 
 \item \label{SHG-geq}  There exists a positive function $W$ such
   that: $\tilde \Delta_m W(x)\geq \lambda (x) W(x)$ for all $x\in
   \Vr$. 
 \item  \label{SHN} There exists a family of positive  function $W_N$
   such that: $\tilde \Delta_m W_N(x)\geq \lambda (x) W_N(x)$ for all
   $x\in \Wr_N$. 
 \item  \label{SHE} For all $N\geq 1$ and all $x\in \Vr$, the positive function 
\begin{equation}\label{eq-SHE} 
U_N(x):=\E \left[\exp \left( \int_0^{T_N}  \lambda(X_s^x) ds \right)\right]
\end{equation}
is finite where $T_N:= \inf\{t\geq 0, X_t^x \in \Wr_N^c\}$ is the
hitting time of the set $\Wr_N^c$ for the continuous time Markov
chain  $(X_t^x)_{t\geq0}$ starting in $x$ and whose generator is
$-\Delta_m$. 
\end{enumerate}

\end{theorem}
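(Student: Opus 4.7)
My plan is to prove the circle (i) $\Rightarrow$ (ii) $\Leftrightarrow$ (iii) $\Rightarrow$ (iv) $\Rightarrow$ (i). The implication (i) $\Rightarrow$ (ii) is immediate, and (ii) $\Leftrightarrow$ (iii) follows directly from Theorem \ref{global-super-sol}: a global super-harmonic $W$ restricts to a family by trivial restriction, and conversely a family $(W_N)$ normalized by $W_N(x_0)=1$ produces a global positive $W$ with $\tilde \Delta_m W \geq \lambda W$ via the Harnack-based limiting procedure.

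For (iv) $\Rightarrow$ (i), I would show directly that $U_N$ satisfies the eigenequation $\tilde \Delta_m U_N = \lambda U_N$ on $\Wr_N$, and then invoke Theorem \ref{global-super-sol}. Fixing $x \in \Wr_N$ and conditioning on the first holding time $T^{(1)} \sim \mathrm{Exp}(\deg(x))$ and the first jump target (which, by the construction in Section \ref{s:markov-process}, is independent of $T^{(1)}$ and equals $y$ with probability $p(x,y)$), the identity $\int_0^{T^{(1)}}\lambda(X_s^x)\,ds = \lambda(x)T^{(1)}$ combined with the strong Markov property yields
\[
U_N(x) = \E\bigl[e^{\lambda(x)T^{(1)}}\bigr]\sum_{y\sim x} p(x,y)\, U_N(y),
\]
with the natural convention $U_N(y) = 1$ for $y \in \Wr_N^c$ (since then $T_N^y = 0$). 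Finiteness of the left-hand side forces $\lambda(x) < \deg(x)$ and $\E[e^{\lambda(x)T^{(1)}}] = \deg(x)/(\deg(x)-\lambda(x))$; clearing denominators and using $\deg(x)p(x,y) = \Er(x,y)/m(x)$ together with $\deg(x) = \frac{1}{m(x)}\sum_{y\sim x}\Er(x,y)$ gives the desired equation on $\Wr_N$. After normalizing $U_N$ by $U_N(x_0)$, Theorem \ref{global-super-sol} produces a global positive $W$ with $\tilde \Delta_m W = \lambda W$ on $\Vr$.

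The remaining direction (iii) $\Rightarrow$ (iv) is the main obstacle. My preferred route is to reduce it to the already-proven Theorem \ref{hitting2} by embedding the jump chain $Y_k := X_{\tau_k}$ (whose generator is $-\Delta_\eta$) into the continuous time process. Setting $\tilde T_N := \inf\{k : Y_k \notin \Wr_N\}$, the identity $\int_0^{T_N}\lambda(X_s)\,ds = \sum_{k=0}^{\tilde T_N - 1}\lambda(Y_k)(\tau_{k+1}-\tau_k)$ together with the conditional independence of the holding times $(\tau_{k+1}-\tau_k) \sim \mathrm{Exp}(\deg(Y_k))$ given $(Y_k)$ yields
\[
U_N(x) = \E\Bigl[\prod_{k=0}^{\tilde T_N - 1}\frac{\deg(Y_k)}{\deg(Y_k) - \lambda(Y_k)}\Bigr],
\]
which is precisely the quantity in Theorem \ref{hitting2}(iv) applied to $\Delta_\eta$ with the rescaled potential $\tilde\lambda := \lambda/\deg \in [0,1)$ and the $\Lambda$ there equal to $1/(1-\tilde\lambda)$. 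Since the inequality $\tilde \Delta_\eta W \geq \tilde\lambda W$ is equivalent (by multiplying by $\eta/m$) to $\tilde \Delta_m W \geq \lambda W$, statement (iii) here matches statement (iii) of Theorem \ref{hitting2} for the rescaled data, and the equivalence transfers. The delicate point is the Fubini/tower-rule computation of the conditional expectation of the exponential of the integral; an alternative, more intrinsic approach via Dynkin's formula applied to the supermartingale $M_t := W_N(X_{t\wedge T_N}^x)\exp\bigl(\int_0^{t\wedge T_N}\lambda(X_s^x)\,ds\bigr)$ combined with optional stopping is also available, but it requires a more careful treatment of the jump structure and of possible explosion.
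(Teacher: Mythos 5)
Your proposal is correct, but for the two substantive implications it takes a genuinely different route from the paper. For \eqref{SHN}$\Rightarrow$\eqref{SHE} the paper works directly in continuous time: it applies Dynkin's formula to $\exp\bigl(\int_0^{t\wedge T_N}\lambda(X_s^x)\,ds\bigr)W_N(X_{t\wedge T_N}^x)$, bounds the expectation by $W_N(x)$ using the super-harmonicity, and lets $t\to\infty$; you instead condition on the jump chain $(Y_k)$ to convert $U_N(x)$ into $\E\bigl[\prod_{k=0}^{\tilde T_N-1}\deg(Y_k)/(\deg(Y_k)-\lambda(Y_k))\bigr]$ and invoke the already-proven discrete-time Theorem \ref{hitting2} for $\Delta_\eta$ with the rescaled potential $\lambda/\deg$. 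This reversal of the logical order is legitimate (the paper's proof of Theorem \ref{hitting2} is self-contained, via Abel summation), and the two preconditions you need do hold: $\lambda<\deg$ pointwise is forced by \eqref{SHN} since $\tilde\Delta_m W(x)<\deg(x)W(x)$ for any positive $W$, and the identity $\tilde\Delta_m=\deg(\cdot)\,\tilde\Delta_\eta$ makes the two super-harmonicity conditions equivalent. The "delicate" Fubini step you worry about is in fact harmless: all quantities are non-negative, $\tilde T_N$ is measurable with respect to the jump chain, and the holding times are conditionally independent exponentials of parameters $\deg(Y_k)$, so Tonelli applies with no integrability caveat; the alternative Dynkin route you sketch is precisely what the paper does. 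For \eqref{SHE}$\Rightarrow$\eqref{SHG-egal} the paper computes the derivative at $h=0^+$ of the stopped semigroup $P_h^{D_{\Wr_N}}U_N$ and identifies it with $-\lambda U_N$ and with $-\tilde\Delta_m U_N$, whereas you use a first-step (strong Markov at the first jump) decomposition $U_N(x)=\E[e^{\lambda(x)T^{(1)}}]\sum_{y\sim x}p(x,y)U_N(y)$ with $U_N\equiv 1$ on $\Wr_N^c$; both yield $\tilde\Delta_m U_N=\lambda U_N$ on $\Wr_N$, and both then conclude with Theorem \ref{global-super-sol}. Your version buys a more elementary, purely probabilistic argument that avoids the semigroup differentiation and the dominated-convergence step; the paper's buys self-containedness in continuous time and is the template it reuses for the coupling arguments of Section \ref{s:10}.
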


\begin{proof}
We focus on the implications:  \eqref{SHN} implies
\eqref{SHE} and \eqref{SHE} implies \eqref{SHG-egal}. 
We start with \eqref{SHN} implies \eqref{SHE}. Let 
\[A_t(x):=\E \left[\exp \left( \int_0^{t}  \lambda(X_s^x)
    ds \right)\right].\] We have: 
\begin{align*}
 A_{t\wedge T_N}(x)  \leq  \frac{1}{\min\{W_N(z),z \in \Wr_N\}} \E
 \left[\exp \left( \int_0^{t\wedge T_N }  \lambda(X_s^x) ds \right)
   W_N(X_{t\wedge T_N}^x) \right]. 
\end{align*}
By the Dynkin  formula we get:
\begin{align*}
 & \E \left[\exp \left( \int_0^{t \wedge T_N}  \lambda(X_s^x) ds
   \right) W_N(X_{t\wedge T_N}^x) \right] \\ 
= & \, W_N(x) + \E \left[ \int_0^{t\wedge T_N}  \exp \left( \int_0^{u
    }  \lambda(X_v^x) dv \right) \big( \lambda (X_u^x) W_N(X_{u}^x) - \tilde 
  \Delta_m W_N(X_u^x)  \big) du \right]\\ 
\leq &\,  W_N(x) < + \infty
\end{align*}
since by hypothesis
$ \tilde \Delta_m W(X_u^x) -\lambda (X_u^x) W(X_u^x) \geq 0$. As $T_N$ is finite almost surely, letting $t\to \infty$ gives \eqref{SHE}.

Finally we assume \eqref{SHE}.
Using the strong Markov property, one has, for $x\in \Wr_N$ and $0<h\leq 1$,
\begin{align*}
 P_h^{D_{\Wr_N}} U_N(x)&= \E\left[ U_N\left(X_{h\wedge T_N^x}^x\right)\right]\\	
                 &= \E \left[\exp \left( \int_0^{ T_N^{X_h^x} }  \lambda \left(X_s^{X_h^x}\right) ds \right) \bone_{\{ h< T_N^x\}} +  \bone_{\{ h\geq T_N^x\}} \right]\\
                 &= \E \left[\exp \left( \int_{h\wedge T_N^x}^{ T_N^x }  \lambda \left(X_u^{x}\right) du \right) \right].
\end{align*}
Therefore, by dominated convergence theorem, since $\lambda$ is bounded on $\Wr_N$,
\begin{align*}
&\hspace{-1cm} \frac{P_h^{D_{\Wr_N}} U_N(x) - U_N(x)}{h}
\\
 &= \E \left[\exp \left( \int_{0}^{ T_N^x }  \lambda \left(X_u^{x}\right) du \right)
 \left( \frac{ \exp \left(- \int_{0}^{ h\wedge T_N^x }  \lambda \left(X_u^{x}\right) du \right) -1}{h} \right)\right]\\
 &\to - \lambda(x) U_N(x), \, \textrm{ as } h\to 0^+.
\end{align*}
But the above limit was already compute to be $- \tilde \Delta_m U_N(x)$; thus
\[
\tilde \Delta_m U_N(x)= \lambda(x) U_N(x), \textrm{ for } x \in \Wr_{N}.  
\]
Theorem \ref{global-super-sol} implies \eqref{SHG-egal}.
\end{proof}

\begin{remark}
 For the normalized Laplacian $\Delta_\eta$, both
 quantities \eqref{eq-rep-proba} 
and \eqref{eq-SHE} coincide. 
Indeed, with the above  notation, if $Z$ is  an exponential random variable of parameter $1$  and if  $0\leq \lambda<1$, then 
\[
 \E\left[\exp(\lambda Z )\right]=\frac{1}{1-\lambda}.
\]
\end{remark}

\section{Weakly spherically symmetric graphs}\label{s:9}
In this section, we assume that the graph $\Gr$ is weakly spherically
symmetric with respect to a $1$-dimensional decomposition
$(S_n)_{n\in \N}$, see Definition \ref{def-wss}.  
We prove that the bottom of the spectrum and the bottom of the
essential spectrum are the same as that of a $1$-dimensional 
Laplacian. The key point behind this result is that on a weakly
spherically symmetric graph, the radial part of the Markov process
associated to the Laplacian on $\Gr$ is still a Markov process. 
We finally construct  more explicitly the global super-solution of
Theorem \ref{global-super-sol}.  

First in the next lemma, we collect some useful known results for
weakly spherically symmetric graphs. 
\begin{lemma}\label{1dim}
 Let $\Gr$ be a weakly spherically symmetric graph  with
 respect to a $1$-dimensional decomposition 
$(S_n)_{n\in \N}$ and let $\lambda$ a be radial function on $\Vr$. The
following assertions hold. 
\begin{enumerate}[{\rm a)}]
\item For $n\geq 0$,
\[
m(S_n)\deg_+(n)=m(S_{n+1}) \deg_-(n+1),
\]
where $\deg_{a}(n):=\deg_{a}(x)$, where $x\in S_n$ and $a\in \{-,0,+\}$.
\item Given $f: \Vr \to \C$ we define $\tilde M$ to be the \emph{averaging
    operator} by: 
\[ \tilde M f(x):=\frac{1}{m(S_n)}
  \sum_{\tilde x \in S_n} f(\tilde x) m(\tilde x) , \quad x\in S_n.\]
We have the following algebraic commutation 
\begin{align*}
\tilde\Delta_\Gr \tilde M f=\tilde M \tilde\Delta_\Gr f.
\end{align*}  
\item If there exists a positive function $W$ which satisfies 
\begin{align}\label{e:1dim}
\tilde \Delta_\Gr W(x)=\lambda(x) W(x), \mbox{ for all }
x\in \Vr,
\end{align}
 then there also exists a positive  radial function which satisfies
 \eqref{e:1dim}.
\item Assuming that $\deg_+(n)\neq 0$ for all $n\in \N$, then  the
  vector space of radial functions $W$ which satisfy the algebraic 
  relations \eqref{e:1dim}
is a $1$-dimensional vector space.

\item Moreover, if $W$ is a radial function which satisfies \eqref{e:1dim} and if  both  $\lambda$ and  $W$ are non-negative 
  on $\Vr$, then  $W$ is a
  non-increasing radial function. 
\end{enumerate} 
\end{lemma}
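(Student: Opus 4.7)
The five parts of the lemma are largely computational, so I would prove them essentially in order, with (a) and (b) doing the real work and the rest following by clean corollaries.

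For (a), the identity is just Fubini: both sides count the total weight of edges between $S_n$ and $S_{n+1}$. Writing
\[
\sum_{x\in S_n} m(x)\deg_+(x)=\sum_{x\in S_n}\sum_{y\in S_{n+1}}\Er(x,y)=\sum_{y\in S_{n+1}}m(y)\deg_-(y),
\]
and using weak spherical symmetry to pull $\deg_+(n)$ and $\deg_-(n+1)$ out of the sums finishes it.

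For (b), the point is that $\tilde M f$ is a radial function, so inside $\tilde\Delta_\Gr(\tilde M f)(x)$ the $\eta_0$-terms drop and only the $S_{n\pm 1}$ neighbors contribute, giving a radial expression of the form
\[
\deg_-(n)(A(n)-A(n-1))+\deg_+(n)(A(n)-A(n+1)),
\]
where $A(n)=\tilde M f(S_n)$. For $\tilde M(\tilde\Delta_\Gr f)(x)$, I would plug the definition of $\tilde\Delta_\Gr$ in and split the double sum over $S_n$ and $\Vr$ by the three possible levels of the neighbor. The level-$n$ piece is antisymmetric in $(x,y)$ and cancels; for the level-$(n-1)$ piece I would change the order of summation and apply (a) to rewrite $\deg_+(n-1)\,m(S_{n-1})=\deg_-(n)\,m(S_n)$, and symmetrically for level-$(n+1)$. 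The outcome is exactly the same formula as above divided by $m(S_n)$, proving the commutation. This is the one step I expect to be the main obstacle, since it requires the correct order-swap and use of~(a).

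Parts (c)--(e) are then short. For (c), if $\tilde\Delta_\Gr W=\lambda(\cdot)W$ pointwise with $\lambda$ radial, apply (b) to get $\tilde\Delta_\Gr(\tilde M W)=\tilde M(\lambda W)=\lambda\,\tilde M W$, and observe that $\tilde M W>0$ whenever $W>0$. For (d), once $W$ is radial, writing $W(x)=w(|x|)$ turns \eqref{e:1dim} into a two-step linear recursion; at $n=0$ the convention $\deg_-(0)=0$ reduces it to $w(1)=(1-\lambda(0)/\deg_+(0))\,w(0)$, and for $n\ge 1$ the hypothesis $\deg_+(n)\neq 0$ lets me solve for $w(n+1)$ in terms of $w(n)$ and $w(n-1)$, so the whole sequence is determined by $w(0)$. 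Finally for (e), I would induct on $n$: at $n=0$ the equation forces $w(1)\le w(0)$ since $\lambda(0),w(0)\ge 0$, and for the inductive step rewrite the recursion as
\[
\deg_+(n)(w(n)-w(n+1))=\lambda(n)w(n)-\deg_-(n)(w(n)-w(n-1)),
\]
whose right-hand side is non-negative by the induction hypothesis and non-negativity of $\lambda$ and $w$, hence $w(n+1)\le w(n)$.
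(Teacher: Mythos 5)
Your proposal is correct and, for parts (a), (d) and (e), follows exactly the paper's argument (the Fubini-type double sum for (a), the two-step recursion determined by $w(0)$ for (d), and the induction on $\deg_+(n)(w(n)-w(n+1))=\lambda(n)w(n)-\deg_-(n)(w(n)-w(n-1))$ for (e)). For (b) and (c) the paper simply cites Lemma 3.2 of \cite{KLW} and Lemma 3.2.1 of \cite{Woj}, whereas you supply the expected direct computation — splitting the double sum by levels, cancelling the intra-sphere part by antisymmetry, and using (a) to match the $S_{n\pm1}$ contributions — which is sound and self-contained.
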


\begin{proof}
The point  a)  is a direct consequence of the relation: 
\begin{align*}
 \sum_{x\in S_n} m(x) \deg_+(x)&= \sum_{x\in S_n} \sum_{y\in S_{n+1}} \Er(x,y)\\
                               &=  \sum_{y\in S_{n+1}} \sum_{x\in S_n}
                               \Er(y,x)= \sum_{y\in S_{n+1}} m(y)
                               \deg_-(y) 
\end{align*}
and the definition of weakly spherically graphs.
 b) and c) were already proven in Lemma 3.2 in \cite{KLW} and Lemma
 3.2.1 in \cite{Woj}, respectively. 
 Let now $W$ be a radial function; $W$ satisfies \eqref{e:1dim} if and
 only it satisfies  
 \[
  \left\{ \begin{array}{l}
           \deg_+(0) W(1)= (\deg_+(0) - \lambda(0)) W(0)\\
            \deg_+(n) W(n+1)= (\deg_+(n)+ \deg_-(n) - \lambda(n)) W(n) - \deg_-(n) W(n-1)   
          \end{array} \right.
 \]
for $n\geq 1$. Thus $W$ is determined by its value in 0. This gives
d). If moreover $W$ and $\lambda$ are non-negative, one has that
$W(1)\leq W(0)$ and writing 
\[
  \deg_+(n) (W(n+1)-W(n) )=  \deg_-(n)( W(n)-W(n-1) - \lambda(n)) W(n) 
\]
for $n\geq 1$, by immediate induction,  e) holds.
\end{proof}

We now study the radial part of the Markov process associated to a
weakly spherically symmetric graph. 
\begin{proposition}\label{prop-m-1d}
Let  $\Gr:=(\Vr, \Er, m)$ be a weighted graph and let $(S_n)_{n\geq
  0}$ be a  $1$-dimensional decomposition of $\Gr$.  Let $(X_t)_{t\geq 0}$  be the 
minimal continuous time Markov chain on $\Vr$ associated to
$-\Delta_{\Gr}$ (see section \ref{s:markov-process}). 
Then  the graph $\Gr$ is weakly spherically symmetric
with respect to  $(S_n)_{n\geq 0}$
if and only if the process  $(|X_t|)_{t\geq 0}$ is a continuous time
Markov chain on $\N$.

Moreover in this case, the generator $L^\N$ of the Markov process $(|X_t|)_{t\geq 0}$ is given by the formula
 \begin{align}\label{e:GenN}
L^\N f (n)= \deg_+(n) (f(n+1)-f(n)) + \deg_-(n)(f(n-1)-f(n))
\end{align} 
for  $f\in \ell^\infty(\N)$.
   It corresponds exactly to $- \tilde \Delta_{\Gr_\N}$ where 
$\Gr_{\mathbb N}:=(\N, \Er_\N,
m_\N)$ with
\begin{align}\label{e:graph-N}
\left.\begin{array}{rl}
\Er_\N(n,m):=&\hspace*{-0.3cm}   \left\{\begin{array}{cl}
                  m(S_n)\deg_\pm(n), & \mbox{ when } m =n\pm 1
                  \\
                 0, & \mbox{ otherwise}
                \end{array}\right. 
 \\
m_\N(n):=&\hspace*{-0.2cm} m(S_n),
\end{array}\right. 
\end{align}
for all $n,m\in \N$. 
\end{proposition}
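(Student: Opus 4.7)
The plan is to exploit the infinitesimal generator of $(X_t)$. The key elementary computation is that for any bounded $f:\N\to\C$, writing $\tilde f(x):=f(|x|)$, one has
\begin{align*}
\tilde\Delta_\Gr \tilde f(x) = \deg_+(x)\bigl(f(|x|)-f(|x|+1)\bigr) + \deg_-(x)\bigl(f(|x|)-f(|x|-1)\bigr).
\end{align*}
From this identity, $\tilde\Delta_\Gr$ sends bounded radial functions to bounded radial functions if and only if $\deg_\pm(x)$ depend only on $|x|$, i.e.\ if and only if $\Gr$ is weakly spherically symmetric; necessity follows by testing with functions $f$ that isolate $\deg_+(x)$, then $\deg_-(x)$.

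For the forward implication, this radial stability passes to the minimal heat semigroup $P_t$ of $(X_t)$, so $P_t\tilde f$ is radial for every bounded $f$; writing $P_t\tilde f(x) = (g_t^\N f)(|x|)$ defines a sub-Markovian semigroup $(g_t^\N)_{t\geq 0}$ on $\ell^\infty(\N)$. By the probabilistic definition of $P_t$ this reads $\E\bigl[f(|X_t^x|)\bone_{t<e(X^x)}\bigr] = g_t^\N f(|x|)$, which is precisely the assertion that $(|X_t|)_{t\geq 0}$ is a continuous time Markov chain on $\N$. Differentiating $P_t\tilde f(x)$ at $t=0^+$ and reading off the coefficients via the displayed formula identifies its generator as the operator $L^\N$ of \eqref{e:GenN}.

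For the converse, assume $(|X_t|)_{t\geq 0}$ is a continuous time Markov chain on $\N$. Then $\E\bigl[f(|X_t^x|)\bone_{t<e(X^x)}\bigr]$ depends on $x$ only through $|x|$, so differentiating at $t=0^+$ shows that $\tilde\Delta_\Gr\tilde f(x)$ depends only on $|x|$. Testing with $f$ such that $f(n+1)-f(n)=1$ and $f(n-1)=f(n)$, respectively $f(n+1)=f(n)$ and $f(n-1)-f(n)=1$, forces both $\deg_+(x)$ and $\deg_-(x)$ to be constant on each $S_n$, which is weak spherical symmetry.

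Finally, the identification $L^\N=-\tilde\Delta_{\Gr_\N}$ for the graph defined in \eqref{e:graph-N} is a direct calculation, and $\Gr_\N$ is well defined (i.e.\ $\Er_\N$ is symmetric) precisely by Lemma \ref{1dim}(a). The main technical obstacle I anticipate is making rigorous the passage from the algebraic identity for $\tilde\Delta_\Gr\tilde f$ to the Markov property of $(|X_t|)_{t\geq 0}$ in the presence of possible explosion; a clean route is to invoke the commutation $\tilde\Delta_\Gr\tilde M = \tilde M\tilde\Delta_\Gr$ of Lemma \ref{1dim}(b) and work with Dirichlet heat semigroups on an exhausting sequence of finite induced subgraphs, where stability under radialization is immediate, before passing to the limit.
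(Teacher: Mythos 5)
Your proof is correct, but the forward direction takes a genuinely different route from the paper. The paper argues pathwise: it builds $(X_t)$ explicitly from independent exponential clocks $E_+(|x|), E_0(x), E_-(|x|)$ of parameters $\deg_+(|x|), \deg_0(x), \deg_-(|x|)$, uses Lemma \ref{lemma-exp3} to absorb the intra-sphere jumps as ``artificial'' ones, and reads off directly that the sequence of radial jump times and positions has the Markov structure with rates $\deg_\pm(|x|)$. You instead go through the semigroup: algebraic commutation with the averaging operator (Lemma \ref{1dim}(b)), stability of radial functions under the Dirichlet semigroups $e^{-t\Delta^{B_N}}$ on the exhaustion by balls (which are unions of spheres, so the restricted averaging operator still makes sense and commutes with $\Delta^{B_N}$), passage to the minimal semigroup by monotone limits, and then lumpability. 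Both work; the paper's construction has the added benefit that the explicit clock coupling is exactly what is reused in Proposition \ref{good-coupling} and Section \ref{s:10}, which your semigroup argument would not provide. One step you should spell out: the identity $P_t\tilde f = (g_t^\N f)\circ|\cdot|$ is not literally ``the assertion that $(|X_t|)$ is a Markov chain''; you need Dynkin's lumping criterion, i.e.\ $\E\bigl[f(|X_{t+s}|)\bone_{t+s<e}\mid\Fr_t^X\bigr]=P_s\tilde f(X_t)=g_s^\N f(|X_t|)$ followed by the tower property down to the smaller filtration $\sigma(|X_u|,u\le t)$. This is short and standard, but it is the actual content of the implication. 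Your converse is essentially identical to the paper's (differentiate the radial semigroup identity at $t=0^+$ and test with functions isolating $\deg_\pm$), and the identification of $L^\N$ with $-\tilde\Delta_{\Gr_\N}$, including the symmetry of $\Er_\N$ via Lemma \ref{1dim}(a), is as in the paper.
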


Note that $\Er_\N$ is symmetric by Lemma \ref{1dim} a). 

\begin{proof}
First we assume that $\Gr$ is weakly spherically symmetric.
We provide an explicit construction of the process $(X_t)_{t\geq 0}$.
The desired properties for the process
$(|X_t|)_{t\geq 0}$ will follow. 
Let $x:=X_0$, $T:=0$ and $k:=0$.
We begin to describe the iteration procedure:

1) We let run three independent (and independent of all the possible
previous steps) random  exponential clock variables
$E_+(|x|),$ $E_0(x),$ $E_-(|x|)$ of parameter  $\deg_+(|x|),$ $\deg_0(x),$
$\deg_-(|x|)$, 
respectively. We then replace $T$ by the time  given by
$T+\min(E_+(|x|),E_0(x),E_-(|x|))$. 

2) If the above minimum equals $E_+(|x|)$ or $E_-(|x|)$, we set $T_{k+1}:=T$ and replace $k$ by $k+1$. We let the process $X$ stay in $x$ until the time $(T_{k+1})^-$ and   jump at  time $T_{k+1}$ in a point $z
\in S_{|x|\pm 1}$ whether the minimum equals
$E_\pm(|x|)$.  We then  go to 3).   
 
If the above minimum equals $E_0(x)$, we let the process $X$ stay in $x$ until the time $T^-$ and  jump in a point $\tilde x
\in S_ {|x|}$ at this time $T$ and repeat 1) with $x$ replaced by
$\tilde x$. 

3) Replace $x$ by $z$ and repeat 1).

With the above construction, the sequence $(T_k)_{k\geq 0}$
corresponds exactly to the sequence of times of the  jumps  associated to
the process $(|X_t|)_{t\geq 0}$ in $\N$. 

By Lemma \ref{lemma-exp3}, each time $T_k$ in the algorithm is
almost surely finite. Indeed for each $n\geq 0$, since $S_n$ is
finite, $\sup_{x\in S_n} \deg_0(x)<\infty$; this ensures that  
hypothesis \eqref{cond-A-infty} is satisfied.   
Moreover $T_{k+1}- T_k$ corresponds to the minimum of two independent
random exponential variables $Z_+,Z_-$ of parameter
$\deg_+(|X_{T_k}|),\deg_-(|X_{T_k}|)$, respectively. 

It is then  clear that the process $(|X_t|)_{t\geq 0}$ is a continuous
time Markov chain whose generator is given by \eqref{e:GenN}. 

Now assume  $(|X_t|)_{t\geq 0}$ is  a continuous time Markov chain on
$\N$. Since $(|X_t|)_{t\geq 0}$ can only make jumps of size $1$, the
generator $L^\N$ reads
\[
 L^\N f (n)= \alpha_+(n) (f(n+1)-f(n)) + \alpha_-(n)(f(n-1)-f(n))
\] 
for $f\in \ell^\infty(\N)$ and some constants $\alpha_\pm (n)\geq 0, n\in\N$
(and $\alpha_-(0)=0$). 
Let $P_t$ and $P_t^\N$ the semigroup on associated to $(X_t)_{t\geq
  0}$ and $(|X_t|)_{t\geq 0}$, respectively. 
Let $x\in \Vr$ and set $n:= |x|$. Consider $f:=\bone_{n+1}\in
\Cc_c(\N)$ and  $g:=f \circ |\cdot | \in \Cc_c(\Vr)$, one has 
\[
 P_t (g) (x) = \E \left[g(X_t^x)\right]= \E \left[f(|X_t^x|)\right]=
 P_t^\N (f)(n). 
\]
Taking derivative at $t=0^+$ gives 
\[
\deg_+(x) = -\Delta_\Gr g (x)= L^\N f(n)= \alpha_+(n).
\]
This shows that for $x\in \Vr$, the quantity $\deg_+(x)$ depends only on $|x|$. 
Similarly, one has that $\deg_-(x)$ depends also only on $|x|$; that
is $\Gr$ is weakly spherically symmetric.  

\end{proof}

\begin{remark}
It is a remarkable fact that the quantity $\deg_0$ does not appear in
the generator of the process $(|X_t|)_{t\geq 0}$ on a weakly spherically
symmetric graph. 
\end{remark}

We now show that the bottom of the spectrum and the essential spectrum
for the two Laplacians coincide.

\begin{theorem}\label{cor-inf-sym}
Let $\Gr:=(\Vr, \Er, m)$ be a weakly symmetric weighted graph such that
$m(\Vr)=+\infty$. With the above notation, we have  
 \[\inf \sigma(\Delta_{\Gr}) =  \inf \sigma(\Delta_{\Gr_\N})
\textrm{ and }
\inf \sigma_{\rm ess} (\Delta_{\Gr})= \inf \sigma_{\rm ess} (\Delta_{\Gr_\N}).
\]                      
\end{theorem}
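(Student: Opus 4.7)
The plan is to establish an isometric identification between radial functions on $\Vr$ and functions on $\N$ that preserves both the $\ell^2$-norm and the Dirichlet energy, and then to apply this correspondence together with the Allegretto--Piepenbrink theorem in two directions.

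First I would verify the identification: for $g \in \Cc_c(\N)$, the radial function $f(x) := g(|x|)$ satisfies $\|f\|_m^2 = \sum_n |g(n)|^2 m(S_n) = \|g\|_{m_\N}^2$, and expanding the Dirichlet form while noting that same-sphere terms contribute $0$ to $\langle f, \Delta_\Gr f\rangle_m$ and using $\sum_{x\in S_n,y\in S_{n+1}}\Er(x,y) = m(S_n)\deg_+(n) = \Er_\N(n,n+1)$ (from Lemma \ref{1dim}(a) and \eqref{e:graph-N}) yields $\langle f, \Delta_\Gr f\rangle_m = \langle g, \Delta_{\Gr_\N} g\rangle_{m_\N}$. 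Dually, the pointwise identity $\tilde\Delta_\Gr W(x) = \tilde\Delta_{\Gr_\N}w(|x|)$ holds for any radial $W(x)=w(|x|)$, as a short computation using the definition of $\deg_{\pm}$ and the hypothesis of weak spherical symmetry shows.

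Next, for the inequalities $\inf\sigma(\Delta_\Gr) \leq \inf\sigma(\Delta_{\Gr_\N})$ and $\inf\sigma_{\rm ess}(\Delta_\Gr) \leq \inf\sigma_{\rm ess}(\Delta_{\Gr_\N})$: since radial compactly supported functions form a subspace of $\Cc_c(\Vr)$, the variational characterization (Theorem \ref{t:min-max}) gives the first inequality immediately. For the essential part, I would use Persson's formula \eqref{e:Persson}, noting that it suffices to take $\Kr = B_n$: the infimum of the Rayleigh quotient over $\Cc_c(\Vr\setminus B_n)$ is bounded above by the infimum over its subspace of radial functions, which by the isometric identification equals $\inf\sigma\bigl(\Delta_{\Gr_\N}^{\{n+1,n+2,\ldots\}}\bigr)$. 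Taking $n\to\infty$ on both sides and applying Persson's formula to $\Gr_\N$ yields the claim.

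For the reverse inequalities, I would apply the Allegretto--Piepenbrink theorem (Theorem \ref{thm-reverse}) to $\Gr_\N$: since $m_\N(\N) = m(\Vr) = +\infty$, part (iv) applies for the essential spectrum, producing for every $\varepsilon > 0$ a finite $N_2$ and a positive function $w$ on $\N$ with $\tilde\Delta_{\Gr_\N}w \geq (\inf\sigma_{\rm ess}(\Delta_{\Gr_\N}) - \varepsilon)\bone_{\{n>N_2\}}w$. Setting $W(x) := w(|x|)$ and using the pointwise identity above, we get $\tilde\Delta_\Gr W \geq (\inf\sigma_{\rm ess}(\Delta_{\Gr_\N}) - \varepsilon)\bone_{B_{N_2}^c}W$ on $\Vr$. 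The Hardy inequality (Proposition \ref{prop:hardy}) then yields $\langle f,\Delta_\Gr f\rangle_m \geq (\inf\sigma_{\rm ess}(\Delta_{\Gr_\N})-\varepsilon)\|f\|^2_m$ for $f\in\Cc_c(\Vr\setminus B_{N_2})$, and Persson's lemma gives $\inf\sigma_{\rm ess}(\Delta_\Gr)\geq \inf\sigma_{\rm ess}(\Delta_{\Gr_\N})-\varepsilon$; letting $\varepsilon\to 0$ concludes. The argument for the bottom of the spectrum is strictly analogous, using part (a) of Theorem \ref{thm-reverse} in place of part (iv).

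The main obstacle, and the reason for the hypothesis $m(\Vr)=+\infty$, is the asymmetry between the two directions: the $\leq$ inequalities are cheap consequences of the min--max applied to the subspace of radial test functions, but the $\geq$ direction requires producing a positive super-harmonic function on $\Vr$; since we do not assume $\inf_{x\in\Vr}m(x)>0$, we cannot invoke Theorem \ref{thm-reverse}(iii) directly on $\Gr$, and instead must construct the super-harmonic function on $\Gr_\N$ (where part (iv) does apply thanks to $m_\N(\N)=+\infty$) and pull it back to $\Vr$ radially.
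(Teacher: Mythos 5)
Your proof is correct and follows essentially the same route as the paper: establish the isometric correspondence between radial functions on $\Vr$ and functions on $\N$, get the $\leq$ direction from the variational characterization on the radial subspace, and get the $\geq$ direction by constructing a super-harmonic function on $\Gr_\N$ via Theorem \ref{thm-reverse} (part (a) for the spectrum, part (d) for the essential spectrum, the latter applicable precisely because $m_\N(\N)=m(\Vr)=+\infty$), pulling it back radially, and applying the Hardy inequality with Persson's lemma (the paper packages this last step as Corollary \ref{cor-lyap-spec}, but Remark \ref{r:A7} notes the two are interchangeable).
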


\begin{proof}
We start with a general fact. Given $f:\N\to \C$, let $g:\Vr\to \C$ be
defined by   $g(x):=f(|x|)$, then for $x\in S_n$, observe that
\[
 \tilde \Delta_{\Gr} g(x)= \tilde \Delta_{\Gr_\N} f(n) \textrm{ and } \Vert
 g\Vert_{\ell^2(\Gr, m)}= \Vert f\Vert_{\ell^2(\N,m_{\N})}. 
\]
It follows easily that $\Dc(\Delta_{\Gr})\cap (\C^\Vr)_{\rm
  rad}=\Dc(\Delta_{\Gr_\Nr})$, where $(\C^\Vr)_{\rm
  rad}$ denotes the set of radial (w.r.t.\ the $1$-dimensional
decomposition) functions $f:\Vr\to \C$. 
It easily follows that  $\sigma (\Delta_{\Gr})\subset \sigma
(\Delta_{\Gr_\N})$ and $\sigma_{\rm ess} (\Delta_{\Gr}) \subset \sigma_{\rm ess}
(\Delta_{\Gr_\N})$.
 
For the reverse inequality, we do the proof only for the bottom of the
essential spectrum. The proof for the bottom of the spectrum is
similar and uses the first point of Theorem \ref{thm-reverse}. 
 Let $\lambda_{\N,\rm ess}^0:=\inf \sigma_{\rm ess} (\Delta_{\Gr_\N})$.
By the third point of Theorem \ref{thm-reverse}, 
for all $\ve >0$, there exist $n_0:=n_0(\ve)$ and  a positive function
$W$ on $\N$ such that   
\[
\tilde \Delta_{\Gr_\N} W(n)\geq (\lambda_{\N,\rm ess}^0-\ve) \bone_{n\geq n_0} W(n).
\]
Let $U:\Gr\to (0,\infty)$ be the function defined by $U(x):=W(|x|)$. Since $\Gr$ is a weakly symmetric graph, for $x\in S_n$, we have
$ \tilde \Delta_{\Gr} U(x)=\tilde \Delta_{\Gr_\N} W(n)$. Therefore,
\[
\tilde \Delta_{\Gr} U(x)\geq (\lambda_{\N,\rm ess}^0-\ve)\bone_{|x|\geq n_0} U(x).
\]
Finally Corollary \ref{cor-lyap-spec} and letting $\ve\to 0$ gives 
\end{proof}

\begin{remark}
The above proof also shows that, in the case of weakly spherically
symmetric  graphs,  the conclusion $(c)$ of Theorem \ref{thm-reverse} 
also holds without the additional hypothesis \eqref{assumption-min-m}. 
\end{remark}

We turn to the the case of the normalized Laplacian. Note that for 
weakly spherically symmetric graphs, since $\deg\equiv 1$ and since
$\deg_{\pm}$ are radial, $\deg_0$ is also radial.  
The next proposition is the discrete
analogous of Proposition \ref{prop-m-1d}. The proof is
straightforward. We omit it. 

\begin{proposition}\label{prop-bd-1d}
Let  $\Gr:=(\Vr, \Er)$ be a  graph and let $(S_n)_{n\geq
  0}$ be a  $1$-dimensional decomposition of $\Gr$.  Let
$(X_k)_{k\in\N}$  be the  discrete time Markov chain on $\Vr$ whose
generator  is $-\Delta_{\Gr,\eta}$ (as defined in section
\ref{s:random-walk}).  
Then  the graph $(\Vr, \Er, \eta)$ is weakly spherically symmetric
 with respect to  $(S_n)_{n\geq 0}$
if and only if the process  $(|X_k|)_{k\in\N }$ is a Markov chain on $\N$.

Moreover, in this case, the transition probabilities  of the  Markov
chain  $(|X_k|)_{k\in \N}$ on $\mathbb N$ are given by 
\[
\left\{
\begin{array}{lcl}
 p(n,n+1)&:=&p_+(n)\\
p(n,n-1)&:=&p_-(n),\\
p(n,n)&:=& p_0(n).
\end{array}
\right. 
\] 
The generator of the  Markov chain  $(|X_k|)_{k\in \N}$ corresponds
exactly to $- \tilde \Delta_{\Gr_\N}$ where   $\Gr_\N:= (\N, \Er_\N,
m_\N)$ with 
\begin{align*}
\Er_\N(n,m)&:=\left\{\begin{array}{ll}
m(S_n){p}_\pm(n), & \mbox{ when } m =n\pm 1,
\\
0 & \mbox{ otherwise.}
\end{array}\right. 
 \\
m_\N(n)&:=m(S_n).
\end{align*}
For $f\in \Cc_c(\N)$,  $-\Delta_{\Gr_\N}$ can be written by as ,
\begin{align}\label{e:GenN'}
-\Delta_{\Gr_\N} f (n)= p_+(n) (f(n+1)-f(n)) + p_-(n)(f(n-1)-f(n)). 
\end{align}
\end{proposition}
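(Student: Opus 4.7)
\textbf{Proof proposal for Proposition \ref{prop-bd-1d}.} The plan is to mimic the strategy used for Proposition \ref{prop-m-1d}, but in the considerably simpler discrete-time setting where the Markov chain is determined by one-step transition probabilities alone. First I would compute, for any fixed $x\in S_n$, the one-step distribution of $|X_{k+1}|$ conditional on $X_k=x$: since $p(x,y)=\Er(x,y)/\eta(x)$, we obtain directly
\[
\mathbb{P}\bigl(|X_{k+1}|=n+1\mid X_k=x\bigr)=\sum_{y\in S_{n+1},\,y\sim x}p(x,y)=\frac{\eta_+(x)}{\eta(x)}=p_+(x),
\]
and similarly $\mathbb{P}(|X_{k+1}|=n-1\mid X_k=x)=p_-(x)$, $\mathbb{P}(|X_{k+1}|=n\mid X_k=x)=p_0(x)$.

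For the forward implication, assume $\Gr$ is weakly spherically symmetric. Then $p_\pm(x)$ depend only on $|x|$, and hence so does $p_0(x)=1-p_+(x)-p_-(x)$. Applying the Markov property of $(X_k)$ and conditioning on the filtration generated by $|X_0|,\ldots,|X_k|$, one averages the above constant one-step probabilities over $\{X_k=x\colon x\in S_n\}$ to conclude that $\mathbb{P}(|X_{k+1}|=m\mid|X_0|,\ldots,|X_k|=n)$ depends only on $n$ and $m$. Thus $(|X_k|)$ is a Markov chain on $\N$ with the advertised transition probabilities. For the converse, assuming $(|X_k|)$ is a Markov chain for every initial law of $X_0$, I would start $(X_k)$ from $X_0=x$ for an arbitrary $x\in S_n$; the identity $\mathbb{P}(|X_1|=n\pm1\mid|X_0|=n)=p_\pm(x)$ forces $p_\pm(x)$ to depend only on $|x|$, which is precisely weak spherical symmetry.

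It remains to identify the generator of $(|X_k|)$ with $-\Delta_{\Gr_\N}$ for the graph specified by \eqref{e:GenN'}. A direct calculation with the definitions of $\Er_\N$ and $m_\N$ gives, for $f\in\Cc_c(\N)$,
\[
\Delta_{\Gr_\N}f(n)=\frac{1}{m_\N(n)}\sum_{m\in\N}\Er_\N(n,m)\bigl(f(n)-f(m)\bigr)=p_+(n)(f(n)-f(n+1))+p_-(n)(f(n)-f(n-1)),
\]
since any self-loop contribution (corresponding to the $p_0$ term) cancels in the Laplacian. On the other hand, writing the one-step generator as the transition kernel minus the identity yields
\[
(P^\N-\Id)f(n)=p_+(n)f(n+1)+p_-(n)f(n-1)+p_0(n)f(n)-f(n),
\]
which, using $p_+(n)+p_-(n)+p_0(n)=1$, is exactly $-\Delta_{\Gr_\N}f(n)$.

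This is a fundamentally routine verification and I do not anticipate any genuine obstacle; the only point requiring a touch of care is making sure in the forward direction that the constancy of one-step transition probabilities across $S_n$ is strong enough to yield the full Markov property (and not merely a Markov-type identity in expectation), which is handled by the averaging argument over the initial position in $S_n$ described above.
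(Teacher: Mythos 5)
Your proof is correct and is essentially the "straightforward" argument the paper deliberately omits, namely the discrete-time analogue of the proof of Proposition \ref{prop-m-1d}: the forward direction is the standard lumpability argument (constancy of the one-step sphere-transition probabilities across each $S_n$, then averaging over the conditional law of $X_k$ within $S_n$), and the converse tests the one-step identity from a deterministic start $X_0=x$, exactly as the paper's continuous-time proof tests $P_t$ on indicator functions and differentiates at $t=0^+$. The generator computation is also right; one could add, as the paper does after Proposition \ref{prop-m-1d}, that $\Er_\N$ is indeed symmetric by Lemma \ref{1dim}~a), so that $\Gr_\N$ is a legitimate weighted graph.
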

%%%

We go back to the general setting and provide a more explicit construction
of the super-harmonic function of Theorem \ref{hitting-cont}. 
\begin{proposition}\label{prop-lafctW-S0}
Assume the graph $\Gr:=(\Vr, \Er, m)$ is weakly spherically symmetric
with respect to a $1$-dimensional decomposition
$(S_n)_{n\geq 0}$. Let $\lambda:\Vr \to [0,1)$ a radial function which
satisfies one of the assertions of Theorem \ref{hitting-cont}.  
Then the unique radial function $W$ which satisfies $W(x_0)=1$ for all
$x_0\in S_0$ and $\tilde \Delta_m W(x)=\lambda(x)W(x)$ is given by 
\[
W(x)= \frac{\E_x\left[\exp \left( \int_0^{T_N} \lambda(X_s^x) ds
    \right)\right] }{\E_{\nu} \left[\exp \left( \int_0^{T_N}
      \lambda(X_{s}^\nu) ds \right)\right] },
 \textrm{ for  } |x|\leq N \textrm{ and } x_0 \in S_0,
\]
where $\nu$ is any probability measure supported on $S_0$ and the
hitting time $T_N:= 
\inf \{t \geq 0, X_{t}^\nu \in B_N^c \}$     of the set
$B_N^c$ for the  
   the continuous Markov process $(X_{t}^\nu )_{t\geq 0}$  on  $\Vr$
   whose generator is $-\Delta_m$ and initial law  is $\nu$. 
   
In particular, this function $W$ is a non-increasing radial positive function.
\end{proposition}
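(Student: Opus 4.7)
The plan is to verify that the right-hand side of the formula defines, for each $N$, a radial solution of the algebraic equation that equals $1$ on $S_0$, and then invoke Lemma \ref{1dim} d), e) to conclude uniqueness, consistency in $N$, and monotonicity. Set
\[
U_N(x) := \E_x\!\left[\exp\!\left(\int_0^{T_N}\lambda(X_s^x)\, ds\right)\right].
\]
Since the hypothesis of Theorem \ref{hitting-cont} is satisfied, $U_N(x)<+\infty$ on $\Vr$, and from the proof of the implication $(iv)\Rightarrow(i)$ in Theorem \ref{hitting-cont} one has the pointwise algebraic identity $\tilde\Delta_m U_N(x)=\lambda(x)U_N(x)$ for every $x\in B_N$.

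The first task is to show that $U_N$ is radial. Since $\Gr$ is weakly spherically symmetric, Proposition \ref{prop-m-1d} gives that $(|X_t^x|)_{t\ge 0}$ is itself a continuous time Markov chain on $\N$ whose law depends only on $|x|$. Writing $\tilde\lambda$ for the radial profile of $\lambda$, both the stopping time $T_N=\inf\{t\ge 0,\ |X_t^x|>N\}$ and the random variable $\int_0^{T_N}\lambda(X_s^x)\,ds=\int_0^{T_N}\tilde\lambda(|X_s^x|)\,ds$ are measurable with respect to the natural filtration of $(|X_t^x|)_{t\ge 0}$, so $U_N(x)$ depends only on $|x|$.

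Next, set $C_N:=\E_\nu[\exp(\int_0^{T_N}\lambda(X_s^\nu)\,ds)]$. Since $\nu$ is supported on $S_0$ and $U_N$ is constant on $S_0$ by radiality, we obtain $C_N=U_N(x_0)$ for any $x_0\in S_0$; in particular $C_N>0$. Define $V_N(x):=U_N(x)/C_N$ on $B_N$. Then $V_N$ is a positive radial function with $V_N\vert_{S_0}=1$ satisfying the algebraic equation $\tilde\Delta_m V_N=\lambda V_N$ on $B_N$. Because each connected component of $\Gr$ is infinite, $\deg_+(n)>0$ for every $n$, so Lemma \ref{1dim} d) applies: radial algebraic solutions form a one-dimensional vector space, and a radial solution is uniquely determined by its value on $S_0$. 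This immediately yields the consistency in $N$ (so $V_N(x)$ does not depend on $N$ for $|x|\le N$), the existence and uniqueness of a global radial solution $W$ on $\Vr$ with $W\vert_{S_0}=1$, and the claimed formula $W(x)=U_N(x)/C_N$ for $|x|\le N$.

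Finally, since $\lambda\ge 0$ and $W>0$, Lemma \ref{1dim} e) gives that $W$ is non-increasing in $|x|$. The only nontrivial step is the radiality of $U_N$, which reduces to the Markov property of the radial part provided by Proposition \ref{prop-m-1d}; everything else is a direct consequence of the one-dimensional algebraic analysis in Lemma \ref{1dim}, so I do not anticipate any serious obstacle.
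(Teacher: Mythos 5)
Your proof is correct and follows essentially the same route as the paper's: finiteness and the algebraic identity $\tilde\Delta_m U_N=\lambda U_N$ on $B_N$ come from the proof of Theorem \ref{hitting-cont}, radiality of $U_N$ comes from the Markov property of $(|X_t|)_{t\ge 0}$ via Proposition \ref{prop-m-1d}, and normalizing on $S_0$ together with the one-dimensionality of radial solutions (Lemma \ref{1dim} d)) gives consistency in $N$, uniqueness, and then monotonicity via Lemma \ref{1dim} e). No gaps.
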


\begin{proof}
Actually, the only thing to prove is that the function $W$ in the proposition is well-defined.
For $N\geq 1$, consider the functions $W_N(x):=\E_x\left[\exp \left(
    \int_0^{T_N} \lambda(X_s^x) ds \right)\right]$. By hypothesis,
these functions are well-defined.  
Since $\Gr$ is weakly spherically symmetric, Proposition
\ref{prop-m-1d} ensures that $(|X_t|)_{t\geq0}$ is a continuous time
Markov process. Therefore  $W_N$ is a radial function. Moreover, it is
constant on $S_0$, thus we have  
$W_N(0)
=\E_{\nu} \left[\exp \left( \int_0^{T_N} \lambda(X_s^{\nu}) ds \right)\right]$ for  any probability measure $\nu$ supported on $S_0$.

Write also
\[\tilde W_N:=\frac{W_N}{W_N(0)} \mbox{ and } \tilde W_{N+1}: =
\frac{W_{N+1}}{W_{N+1}(0)}\]
so that $\tilde W_N(0)= \tilde W_{N+1}(0)=1$. 
Previous computations show that 
\[
\tilde \Delta_m \tilde  W_{N+1}(x) =  \lambda (x)  \tilde W_{N+1} (x)
\textrm{ for all } x\in B_N\]
and 
\[\tilde \Delta_m \tilde  W_{N}(x) =  \lambda (x)  \tilde W_{N} (x) \textrm{
  for all } x\in B_{N} 
\]
By lemma \ref{1dim}, we have $\tilde W_{N+1}(x)= \tilde W_N(x)$ for all $x\in B_{N}$.

It follows that the function $W$ in the proposition is well-defined
and satisfies $W>0$ and $\tilde \Delta W(x)=\lambda (x) W(x)$. It is clearly
radial as a limit of radial functions 
 and non-increasing by Lemma \ref{1dim}.
\end{proof} 

\begin{remark}
 In the case of the normalized Laplacian, the function $W$ in Proposition \ref{prop-lafctW-S0} can also be written as:
\[
W(x)= \frac{\E_x[A_{ T_N}] }{\E_{\nu} [A_{ T_N}] } \textrm{ for }|x|\leq N
\]
with $A_n:= \prod_{k=0}^{n-1} \Lambda(X_k^x)$,
$\Lambda(x):=\frac{1}{1-\lambda(x)}$, $\nu$ any probability measure on
$S_0$, and $T_N:= \inf \{n \geq 0, X_n^x \in B_N^c \}$ the hitting time
of the set $B_N^c$  
for $(X_k^\nu)_{k\geq 0}$  the random walk on $\Vr$ whose generator is $-\Delta_\eta$ and initial law $\nu$.
\end{remark}

\section{The bottom of the spectrum 
and of the essential spectrum}\label{s:10}   
In this section, we compare the bottom of the spectrum and the
essential spectrum  of  different weighted Laplacians.  
The idea here is to compare directly  the associated stochastic Markov
processes (see Proposition \ref{good-coupling}).  We then obtain  a
general comparison result (see Theorem \ref{thm-comparaison-general}).  
 This result  is an important improvement of Theorem 4  in \cite{KLW}

First, we provide a  coupling  between the Markov processes
on two different weighted graphs. 

\begin{proposition}\label{good-coupling}
Let $\Gr:=(\Vr^\Gr,\Er^\Gr,m^\Gr)$ and $\Hr:=(\Vr^\Hr,\Er^\Hr,m^\Hr)$
be two weighted graphs. Let $(S_n^\Gr)_{n\geq0}$ and
$(S_n^\Hr)_{n\geq0}$ be 1-dimensional decompositions for respectively
$\Gr$ and $\Hr$. Let $x_0\in \Gr$ and $y_0\in \Hr$ be such that
$|x_0|^\Gr=|y_0|^\Hr$. Let $(X_t^{x_0})_{t\geq 0}$ and
$(Y_t^{y_0})_{t\geq 0}$ be the continuous time Markov chains associated to
$\Delta_\Gr$ and $\Delta_\Hr$, respectively. 

Assume that for all $n\geq 0, x\in S_n^\Gr, y\in S_n^\Hr$ there exist $\deg_{0,0}^\Gr(x)\geq 0$ and $\deg_{0,0}^\Hr(y)\geq 0$
such that 
\begin{equation}\label{tilde-p}
 \tilde p_+^\Gr(x)\geq\tilde p_+^\Hr(y), \tilde p_-^\Gr(x)\leq \tilde p_-^\Hr(x) \textrm{ and } \widetilde \deg^\Gr(x)\geq  \widetilde \deg^\Hr(y);
\end{equation}
where
\[
\widetilde\deg^\Ar(z):=\deg^\Ar(z)+\deg_{0,0}^\Ar(z) \textrm{ and } \tilde p_l^\Ar(z):=\frac{\deg_l^\Ar(z)}{ \widetilde \deg^\Ar(z)},
\]
for $ z=x,y ; l=+,- \textrm{ and } \Ar=\Gr,\Hr$.
Then there exists a coupling of the processes $(X_t)_{t\geq 0}$ and $(Y_t)_{t\geq 0}$ such that, almost surely, 
\[
 |X_{G_i}|^\Gr = |Y_{H_i}|^\Hr, 
\]
\[
 |X_t|^\Gr \geq |Y_s|^\Hr  \textrm{ for }  t\in [G_i,G_{i+1}[,
 s\in[H_i,H_{i+1}[, \, i\geq 0,    
\]
where $[G_i,G_{i+1}[$ and $[H_i,H_{i+1}[$ are random intervals such
that, almost surely,  $G_i \to e(X^x), H_i \to  e(Y^y)$ as $i\to
+\infty$ and 
\[
 H_{i+1}-H_{i} \geq G_{i+1}-G_{i} \geq 0, \, i\geq 0. 
\]
Since $G_0=H_0=0$, almost surely, we have 
\begin{equation}\label{e:explosion}
 e(Y^y) \geq e(X^x),
\end{equation}

\begin{equation}\label{e:T_N}
T_{N,x_0}^\Gr \leq T_{N,y_0}^\Hr
\end{equation}
 where $T_{N,z_0}^\Ar:=\inf \{t\geq 0, |Z_t^{z_0}|^\Ar>N\}$, $\Ar=\Gr,\Hr; Z_t^{z_0}=X_t^{x_0}, Y_t^{y_0}$; and 
\begin{equation}\label{e:L_n}
L_{X^{x_0}}^N(n) \leq L_{Y^{y_0}}^N(n), \, 1\leq n\leq N
\end{equation}
where $L_{Z_t^{z_0}}^N(n):=\int_0^{T_N^\Ar} \bone_{S_n^\Ar}(Z_s^{z_0})
ds$, $Z_t^{z_0}=X_t^{x_0}, Y_t^{y_0}; \Ar=\Gr,\Hr;$ is the time spent
in the sphere $S_n^\Ar$ by the process $Z_t^{z_0}$ before it reaches
$S_{N+1}^\Ar$. 
\end{proposition}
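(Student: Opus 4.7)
The plan is to construct the coupling between $(X_t)_{t\geq 0}$ and $(Y_t)_{t\geq 0}$ inductively, synchronizing one step at a time. The essential tool is Lemma \ref{lemma-exp3}, which allows the insertion of artificial self-loops into each process's event stream so that both processes can be driven by a common stream of events.

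First I extract the algebraic consequences of the hypotheses. Combining $\tilde p_+^\Gr(x) \geq \tilde p_+^\Hr(y)$ with $\widetilde\deg^\Gr(x) \geq \widetilde\deg^\Hr(y)$ gives $\deg_+^\Gr(x) \geq \deg_+^\Hr(y)$, and dividing the first two hypothesized inequalities yields
\[
\frac{\deg_+^\Gr(x)}{\deg_+^\Gr(x) + \deg_-^\Gr(x)} \;\geq\; \frac{\deg_+^\Hr(y)}{\deg_+^\Hr(y) + \deg_-^\Hr(y)},
\]
so the conditional probability of an outer jump given a radial event is larger for $X$ than for $Y$, uniformly in $x \in S_n^\Gr$ and $y \in S_n^\Hr$. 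These are the only quantitative ingredients I will need from the hypothesis.

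The inductive construction: set $G_0 = H_0 = 0$, and assume we have built both trajectories up to times $G_i$ and $H_i$ with $|X_{G_i}|^\Gr = |Y_{H_i}|^\Hr =: n_i$. I would then run both processes from $(G_i,H_i)$ using a common event stream of rate $\widetilde\deg^\Gr(X_t)$, with an additional artificial self-loop of rate $\widetilde\deg^\Gr(X_t)-\widetilde\deg^\Hr(Y_t) \geq 0$ inserted into $Y$'s dynamics; by Lemma \ref{lemma-exp3} this preserves the law of $Y$. At each event a single uniform random variable $U \in [0,1]$ is used to decide the type of move for both processes simultaneously, with $[0,1]$ partitioned so that the marginals match the correct outer/inner/spherical/artificial probabilities for each process and so that whenever $Y$ executes a radial jump, $X$ executes a radial jump in the same direction (using the inequality on conditional up-probabilities above). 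The times $G_{i+1}$ and $H_{i+1}$ are then the respective first times of this coupled radial event. Between $G_i$ and $G_{i+1}$ (respectively $H_i$ and $H_{i+1}$), $|X|$ and $|Y|$ are constant and both equal $n_i$, so $|X_t|^\Gr \geq |Y_s|^\Hr$ holds as equality on the matched intervals. The bound $H_{i+1}-H_i \geq G_{i+1}-G_i$ is then produced by the fact that $Y$ spends additional ``slack'' on artificial events that $X$ does not.

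The stated conclusions follow routinely: almost-sure finiteness of the $G_i, H_i$ and their convergence to $e(X^{x_0}), e(Y^{y_0})$ come from the standard minimal-chain construction of Section \ref{s:markov-process}; \eqref{e:explosion} is obtained by telescoping $H_{i+1}-H_i \geq G_{i+1}-G_i$ from $G_0 = H_0 = 0$; \eqref{e:T_N} follows because the first index $i^\ast$ with $n_{i^\ast} > N$ is the same for both processes by level-matching, giving $T^\Gr_{N,x_0} = G_{i^\ast}$ and $T^\Hr_{N,y_0} = H_{i^\ast}$; and \eqref{e:L_n} follows by writing $L_{X^{x_0}}^N(n) = \sum_{i:n_i=n}(G_{i+1}-G_i)$ and comparing term by term. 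The main obstacle I anticipate is the careful simultaneous coupling of both direction \emph{and} timing: the single-uniform partition of $[0,1]$ must be consistent with the inequalities for both the outer and inner directions at each step, even as spherical jumps move $X$ and $Y$ through various vertices within a sphere. The requirement that the three hypothesized inequalities hold for \emph{every} pair $x \in S_n^\Gr, y \in S_n^\Hr$ (rather than merely on average) is precisely what allows the artificial self-loops to be inserted uniformly at each common event, regardless of the current spherical position.
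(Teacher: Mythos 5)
Your construction has a genuine gap at its core. You force both processes onto a \emph{common} event stream of rate $\widetilde \deg^\Gr(x)$ by inserting an extra self-loop of rate $\widetilde \deg^\Gr(x)-\widetilde \deg^\Hr(y)$ into $Y$. This does preserve the law of $Y$, but it dilutes $Y$'s per-event jump probabilities from $\tilde p_\pm^\Hr(y)$ to $\deg_\pm^\Hr(y)/\widetilde \deg^\Gr(x)$. To keep $|X|\geq |Y|$ after a simultaneous event starting from equal levels you need the post-event level of $X$ to stochastically dominate that of $Y$, i.e.\ \emph{both} $\PR(X \mbox{ out})\geq \PR(Y\mbox{ out})$ \emph{and} $\PR(X\mbox{ in})\leq \PR(Y\mbox{ in})$. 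The first inequality survives the dilution, but the second becomes $\deg_-^\Gr(x)\leq \deg_-^\Hr(y)$, which is the \emph{stronger curvature growth} condition of \cite{KLW} and does not follow from \eqref{tilde-p}. Concretely, take $\deg_+^\Gr(x)=20/3$, $\deg_-^\Gr(x)=10/3$ and $\deg_+^\Hr(y)=\deg_-^\Hr(y)=1$ with no spherical or artificial edges: \eqref{tilde-p} holds ($2/3\geq 1/2$, $1/3\leq 1/2$, $10\geq 2$), yet in the common stream of rate $10$ the inward probabilities are $1/3$ for $X$ and $1/10$ for $Y$, so no choice of partition of $[0,1]$ prevents the event where $X$ steps inward while $Y$ stands still, which destroys the domination. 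Your requirement ``whenever $Y$ jumps radially, $X$ jumps radially in the same direction'' is the wrong implication for the inward move: one needs $X\mbox{ in}\Rightarrow Y\mbox{ in}$, not the converse. The paper's proof avoids all of this precisely by \emph{not} equalizing the clocks: each process keeps its own total rate $\widetilde\deg^\Ar$, so the per-event probabilities remain exactly the $\tilde p_\pm^\Ar$ for which \eqref{tilde-p} gives the correct stochastic ordering, and the holding times are coupled by the deterministic scaling $H=\frac{\widetilde \deg^\Gr(x)}{\widetilde \deg^\Hr(y)}\,G\geq G$ --- which is also the actual source of $H_{i+1}-H_i\geq G_{i+1}-G_i$, not ``slack'' from artificial events (under a genuinely common clock that difference would be zero).

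A second, related gap is resynchronization. You assert that $|X|$ and $|Y|$ are constant and equal on the matched intervals and that the coupled radial event restores equality of levels, but under any correct coupling the two processes will in general land at \emph{different} levels after a radial event (that is the whole content of the domination). You must therefore explain how the induction hypothesis $|X_{G_{i+1}}|^\Gr=|Y_{H_{i+1}}|^\Hr$ is recovered; the paper does this by freezing $X$ after its jump and letting $Y$ evolve independently until it first reaches the sphere of level $|X_{G_{i+1}}|^\Gr$, taking that hitting time as $H_{i+1}$. Without such a catch-up step the induction does not close, and the intermediate claim $|X_t|^\Gr\geq |Y_s|^\Hr$ on the matched intervals is not established.
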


\begin{proof}
We proceed by induction on $i\geq 0$. 
Assume $X_{G_i}=x, Y_{H_i}=y$ with $|x|^\Gr=|y|^\Hr$. Let us  add the
artificial jumps $\deg_{0,0}^\Gr(x)$ and $\deg_{0,0}^\Hr(y)$.  
Consider $G$ an independent exponential random variable of parameter
$\widetilde \deg^\Gr(x)$ and set  
$H:=\frac{\widetilde \deg^\Gr(x)}{\widetilde \deg^\Hr(y)} G$. $H$ is
thus an exponential random variable of parameter $\widetilde
\deg^\Hr(y)$. Clearly by construction $H\geq G$. 
Moreover, we can couple $X_t$ and $Y_s$ in such a way that after the jumps 
\[
|X_{G_i+G}|^\Gr \geq |Y_{H_i+H}|^\Hr.
\]
The construction will be explained below.
We then set $G_{i+1}:=G_i+G$.
If $|Y_{H_i+H}|^\Hr =|X_{G_{i+1}}|^\Gr$ we set $H_{i+1}=H_i+H$.
Otherwise if $|Y_{H_i+H}|^\Hr <|X_{G_{i+1}}|^\Gr$,  we freeze the
process $X$ in $X_{G_{i+1}}$ and let evolve independently the process
$Y$ until the time $s'$ defined by $s':=\inf \{u\geq H_i+H,
|Y_u|^\Hr=|X_t|^\Gr \}$.  
$s'$ is thus the hitting time of the sphere $S_{|X_t|^\Gr}^\Hr$. Since
$B_N$ is a finite set $s'$ is finite almost surely. We then set
$G_{i+1}=s'$.  

 Now we turn to the  construction of  the coupling of the jumps. Label
 the neighbors of $x$  and $y$  
by $x_1, \dots, x_r$  and $y_1, \dots, y_{r'}$ in such a way that
$|x_k|^\Gr$ and $|y_k|^\Hr$ are non-increasing with $k$. Note that  if
$\deg_{0,0}^\Ar(z)>0$ then $z$ is  a neighbor of $z$,
$\Ar=\Gr,\Hr;z=x,y$. 
Let  $U$ be an independent random variable with uniform law on $[0,1]$. 
 Set $X_{G_i+G}= x_j$ and $Y_{H_i+H}= y_j'$ where $j$ and $j'$ are
 the unique integer  in $\{1, \dots,r\}$ and  $\{1, \dots,r'\}$ such that  
\[
\tilde p^\Gr(x,x_1) +\dots
+ \tilde p^\Gr(x,x_{j-1}) \leq U < \tilde p^\Gr(x,x_1) +\dots + \tilde
p^\Gr(x,x_{j}) 
\]
and 
\[
\tilde p^\Hr(y,y_1) +\dots
+ \tilde p^\Hr(y,y_{j'-1}) \leq U < \tilde p^\Hr(y,y_1) +\dots +
\tilde p^\Hr(y,y_{j'}). 
\] 
Since by hypothesis $ \tilde p_+^\Gr(x)\geq\tilde p_+^\Hr(y)$ and $
\tilde p_-^\Gr(x)\leq \tilde p_-^\Hr(x)$, it is clear that  
\[
|X_{G_{i+1}}|^\Gr \geq |Y_{H_i+H}|^\Hr. 
\]
By using Lemma \ref{lemma-exp3}, $(X_t)_{t\geq0}$ and $(Y_s)_{s\geq0}$
are the Markov processes associated to $\Delta_\Gr$ and $\Delta_\Hr$. 
The other statements  are then immediate.
\end{proof}

Actually, there is a simpler characterization of condition
\eqref{tilde-p}.

\begin{definition}
Let $\Gr:=(\Vr^\Gr,\Er^\Gr,m^\Gr)$ and $\Hr:=(\Vr^\Hr,\Er^\Hr,m^\Hr)$ be two weighted graphs.  We say that $\Gr$ has a \emph{stronger
weak-curvature growth} than $\Hr$ if    
\begin{equation}\label{eq-condition-good-coupling}
\deg_+^\Gr(x)\geq \deg_+^\Hr(y) \textrm{ and } \frac{\deg_+^\Gr(x)}{\deg_-^\Gr(x)} \geq  \frac{\deg_+^\Hr(y)}{\deg_-^\Hr(y)}
\end{equation}
 for $x\in \Vr^\Gr, y\in \Vr^\Hr$, $|x|^\Gr=|y|^\Hr$.
\end{definition} 

\begin{proposition}\label{condition-good-coupling}
Let $\Gr:=(\Vr^\Gr,\Er^\Gr,m^\Gr)$ and $\Hr:=(\Vr^\Hr,\Er^\Hr,m^\Hr)$
be two weighted graphs.  
Then  \eqref{tilde-p} holds true if and only if  $\Gr$ has a 
stronger weak-curvature growth than $\Hr$.
\end{proposition}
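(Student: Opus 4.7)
The plan is to reformulate both conditions as elementary algebraic inequalities among the raw degrees $\deg_\pm^\Gr(x)$ and $\deg_\pm^\Hr(y)$ and then verify the equivalence directly. Setting $S^\Gr:=\widetilde\deg^\Gr(x)$ and $S^\Hr:=\widetilde\deg^\Hr(y)$ and clearing denominators, condition \eqref{tilde-p} reads: there exist $\deg_{0,0}^\Gr(x),\deg_{0,0}^\Hr(y)\geq 0$ such that
\begin{equation}\label{e:cleared-plan}
\deg_+^\Gr(x)\, S^\Hr \geq \deg_+^\Hr(y)\, S^\Gr, \quad \deg_-^\Gr(x)\, S^\Hr \leq \deg_-^\Hr(y)\, S^\Gr, \quad S^\Gr \geq S^\Hr,
\end{equation}
where by definition $S^\Gr \geq \deg^\Gr(x)>0$ and $S^\Hr \geq \deg^\Hr(y)>0$.

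For the direct implication, assume \eqref{e:cleared-plan} holds. Multiplying the first inequality of \eqref{e:cleared-plan} by $\deg_-^\Hr(y)$ and the second by $\deg_+^\Hr(y)$ and comparing yields $\deg_+^\Gr(x)\deg_-^\Hr(y) \geq \deg_-^\Gr(x)\deg_+^\Hr(y)$, which is the ratio half of \eqref{eq-condition-good-coupling}. The first inequality of \eqref{e:cleared-plan} combined with $S^\Gr \geq S^\Hr > 0$ gives $\deg_+^\Gr(x) \geq \deg_+^\Hr(y)$.

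For the converse, I would exhibit an explicit pair $(\deg_{0,0}^\Gr(x),\deg_{0,0}^\Hr(y))$. Under \eqref{eq-condition-good-coupling} the interval $I:=\bigl[\max(1,\deg_-^\Gr(x)/\deg_-^\Hr(y)),\; \deg_+^\Gr(x)/\deg_+^\Hr(y)\bigr]$ is nonempty, with the convention that the lower endpoint collapses to $1$ when $\deg_-^\Hr(y)=0$ (in which case the cross-multiplied ratio inequality forces $\deg_-^\Gr(x)=0$ as well, making the second constraint of \eqref{e:cleared-plan} trivial). Pick any $t\in I$ and any $T>0$ with $T\geq \deg^\Hr(y)$ and $tT\geq \deg^\Gr(x)$, and set $\deg_{0,0}^\Hr(y):=T-\deg^\Hr(y)\geq 0$ and $\deg_{0,0}^\Gr(x):=tT-\deg^\Gr(x)\geq 0$. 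Then $S^\Hr=T$ and $S^\Gr=tT$, and substitution into \eqref{e:cleared-plan} gives the three desired inequalities from $t\leq \deg_+^\Gr(x)/\deg_+^\Hr(y)$, $t\geq\deg_-^\Gr(x)/\deg_-^\Hr(y)$, and $t\geq 1$, respectively. The only delicate point is treating the degenerate case $\deg_-^\Hr(y)=0$ uniformly, which the convention above takes care of, so no substantive obstacle remains.
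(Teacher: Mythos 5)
Your proof is correct and follows essentially the same route as the paper's: your parameter $t$ is exactly the paper's ratio $\alpha=z_1/z_2$, and both proofs hinge on the observation that \eqref{tilde-p} is equivalent to $\max\bigl(1,\deg_-^\Gr(x)/\deg_-^\Hr(y)\bigr)\leq t\leq\deg_+^\Gr(x)/\deg_+^\Hr(y)$ for some admissible ratio $t$; you simply spell out the converse construction of $(\deg_{0,0}^\Gr,\deg_{0,0}^\Hr)$ a bit more explicitly than the paper does.
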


\begin{proof}
Indeed,  condition \eqref{tilde-p} in Proposition \ref{good-coupling}
is equivalent to: there exist $z_1\geq \deg^\Gr(x)$ and $z_2\geq
\deg^\Hr(y)$ such that  
\[
\frac{\deg_+^\Gr(x)}{z_1}\geq \frac{\deg_+^\Hr(y)}{z_2},\, \frac{\deg_-^\Gr(x)}{z_1}\leq \frac{\deg_-^\Hr(y)}{z_2}
\textrm{ and } z_1\geq z_2.
\]
The above line is equivalent to 
\[
\max\left(\frac{\deg_-^\Gr(x)}{\deg_-^\Hr(y)},1\right) \leq \frac{z_1}{z_2} \leq \frac{\deg_+^\Gr(x)}{\deg_+^\Hr(y)}
\]
Therefore  condition \eqref{tilde-p} implies condition \eqref{eq-condition-good-coupling}.
Reciprocally, if  condition \eqref{eq-condition-good-coupling} holds, then it is possible to find $\alpha \geq 1$ such that 
\[
\max\left(\frac{\deg_-^\Gr(x)}{\deg_-^\Hr(y)},1\right) \leq \alpha \leq \frac{\deg_+^\Gr(x)}{\deg_+^\Hr(y)}
\]
It is then easy to see that one can choose $z_1\geq \deg^\Gr(x)$ and $z_2\geq \deg^\Hr(y)$ in  such a way  that $\frac{z_1}{z_2}=\alpha$.
\end{proof}

We then  state a general comparison result for the bottom of the spectra.
\begin{theorem}\label{thm-comparaison-general}
Let $\Gr:=(\Vr^\Gr,\Er^\Gr,m^\Gr)$ and $\Hr:=(\Vr^\Hr,\Er^\Hr,m^\Hr)$
two weighted graphs. Assume $\Gr$ has stronger weak-curvature growth
than  $\Hr$, then  
\[
 \inf \sigma (\Delta_\Gr) \geq \inf \sigma (\Delta_{\Hr}).
\]
If moreover $\inf\{ m^\Hr(x),x\in\Vr^\Hr\}>0$  or  $\Hr$ is a weakly symmetric graph and $m^\Hr(\Vr^\Hr)=+\infty$, then
\[
 \inf \sigma_{\rm ess} (\Delta_\Gr) \geq \inf \sigma_{\rm ess} (\Delta_{\Hr}).
\]
\end{theorem}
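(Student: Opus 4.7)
The plan is to use the coupling of Proposition \ref{good-coupling} (which applies by Proposition \ref{condition-good-coupling}) together with the probabilistic representation of super-harmonic functions from Theorem \ref{hitting-cont} to transport a positive super-harmonic function from $\Hr$ to $\Gr$; the Hardy inequality (Proposition \ref{prop:hardy}) then translates the resulting bound into a lower bound on the spectrum.

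First I would treat the bottom of the spectrum. Set $\lambda_0 := \inf \sigma(\Delta_\Hr) \geq 0$ and fix $\lambda \in [0, \lambda_0]$. Theorem \ref{thm-reverse}~a) provides a positive function $W_\Hr$ on $\Vr^\Hr$ with $\tilde \Delta_\Hr W_\Hr \geq \lambda W_\Hr$, and Theorem \ref{hitting-cont} converts this into the finiteness of $\E_y\bigl[\exp(\lambda T_{N,y}^\Hr)\bigr]$ for every $y \in \Vr^\Hr$ and every $N$. For each $x \in \Vr^\Gr$ I would pick $y \in \Vr^\Hr$ with $|y|^\Hr = |x|^\Gr$ (this is the natural setting of the hypothesis \eqref{eq-condition-good-coupling}) and invoke the coupling, which yields $T_{N,x}^\Gr \leq T_{N,y}^\Hr$ almost surely by \eqref{e:T_N}. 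Since $\lambda \geq 0$,
\[
\E_x\bigl[\exp(\lambda T_{N,x}^\Gr)\bigr] \;\leq\; \E_y\bigl[\exp(\lambda T_{N,y}^\Hr)\bigr] \;<\; \infty,
\]
so Theorem \ref{hitting-cont} in the reverse direction gives a positive $W_\Gr$ on $\Vr^\Gr$ with $\tilde \Delta_\Gr W_\Gr \geq \lambda W_\Gr$. Plugging $W_\Gr$ into the Hardy inequality yields $\langle f, \Delta_\Gr f\rangle \geq \lambda \|f\|^2$ for all $f \in \Cc_c(\Vr^\Gr)$, hence $\inf\sigma(\Delta_\Gr) \geq \lambda$; letting $\lambda \uparrow \lambda_0$ concludes this part.

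For the essential spectrum comparison, let $\mu := \inf\sigma_{\rm ess}(\Delta_\Hr)$ and fix $\ve > 0$ with $\mu - \ve \geq 0$. Under either additional hypothesis on $\Hr$ --- using part c) of Theorem \ref{thm-reverse} when $\inf m^\Hr > 0$, or the remark following Theorem \ref{cor-inf-sym} when $\Hr$ is weakly symmetric with $m^\Hr(\Vr^\Hr) = +\infty$ --- there exist $N_2 = N_2(\ve)$ and a positive $W_\Hr$ on $\Vr^\Hr$ with $\tilde \Delta_\Hr W_\Hr \geq (\mu - \ve)\bone_{B_{N_2}^c} W_\Hr$. Applying Theorem \ref{hitting-cont} with the position-dependent potential $(\mu-\ve)\bone_{B_{N_2}^c}$ gives the finiteness of $\E_y\bigl[\exp\bigl((\mu-\ve)\!\int_0^{T_N^\Hr}\!\bone_{B_{N_2}^c}(Y_s^y)\,ds\bigr)\bigr]$. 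The key input from the coupling is now the occupation-time inequality \eqref{e:L_n}: for $|x|^\Gr = |y|^\Hr$,
\[
\int_0^{T_N^\Gr}\!\bone_{B_{N_2}^c}(X_s^x)\,ds \;=\; \sum_{n > N_2} L_{X^{x}}^N(n) \;\leq\; \sum_{n > N_2} L_{Y^{y}}^N(n) \;=\; \int_0^{T_N^\Hr}\!\bone_{B_{N_2}^c}(Y_s^y)\,ds,
\]
so the analogous expectation on $\Gr$ is also finite. Theorem \ref{hitting-cont} then produces $W_\Gr > 0$ on $\Vr^\Gr$ with $\tilde\Delta_\Gr W_\Gr \geq (\mu - \ve)\bone_{B_{N_2}^c} W_\Gr$, and the Hardy inequality applied to $f \in \Cc_c(B_{N_2}^c)$ gives $\langle f, \Delta_\Gr f\rangle \geq (\mu - \ve)\|f\|^2$. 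Persson's formula \eqref{e:Persson} then yields $\inf\sigma_{\rm ess}(\Delta_\Gr) \geq \mu - \ve$, and letting $\ve \to 0$ concludes.

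The main obstacle is the essential spectrum case: the crude comparison \eqref{e:T_N} of stopping times is not enough, because one needs to transport an \emph{indicator-valued} Lyapunov potential, not just a constant. This is exactly what the refined inequality \eqref{e:L_n} on the occupation times of each sphere provides, and it is what makes the coupling of Proposition \ref{good-coupling}, rather than a crude domination, the correct probabilistic tool here.
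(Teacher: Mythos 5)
Your proposal is correct and follows essentially the same route as the paper's proof: a super-harmonic function on $\Hr$ obtained from Theorem \ref{thm-reverse}, converted by Theorem \ref{hitting-cont} into finiteness of an exponential functional, transported to $\Gr$ via the coupling of Proposition \ref{good-coupling} --- using the occupation-time comparison \eqref{e:L_n} precisely because the potential is an indicator rather than a constant --- and converted back into a super-harmonic function on $\Gr$. The only cosmetic difference is that you conclude via the Hardy inequality and Persson's formula where the paper invokes Corollary \ref{cor-lyap-spec}, which the paper itself notes (Remark \ref{r:A7}) amounts to the same thing.
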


\begin{proof}
 We
keep the notation  of the proof of Proposition \ref{good-coupling}. 
Let $(X_t^x)_{t\geq 0}$ and $(Y_t^y)_{t\geq 0} $ be the coupled continuous
time Markov chains of generator  
  $\Delta_{\Gr}$ and $\Delta_{\Hr}$ starting in $x\in \Vr^\Gr$ and
  $y\in \Vr^\Hr$ such that $|x|^\Gr=|y|^\Hr$, respectively.

Let $\lambda_{\Hr,\rm ess}^0:=\inf \sigma_{\rm ess} (\Delta_\Hr)$ and
let $\ve>0$. With the hypothesis in Theorem
\ref{thm-comparaison-general}, by Theorem \ref{thm-reverse}, there
exist $n_0:=n_0(\ve)$ and  a positive function $W$ on $\Vr^\Hr$ such
that,  
\[
 \tilde \Delta_\Hr W(x)\geq \lambda(|x|^\Hr)  W(x).
\]
where
 $\lambda(n):= (\lambda_{\Hr,\rm ess}^0-\ve) \bone_{n\geq n_0}, n\geq 0$.
The probabilistic representation of Theorem \ref{hitting-cont} of
super-harmonic functions gives that for all $N\geq 1$, 
\[
 \E \left[\exp \left( \int_0^{T_N^{\Hr}}  \lambda(|Y_s^y|) ds
   \right)\right]<+\infty. 
\]
By Proposition \ref{good-coupling}, for $N\geq 1$ we have:
\[
L_{X^x}^N(n) \leq L_{Y^y}^N(n), \quad 0\leq n\leq N.
\]
By noticing that
\[
\E \left[\exp \left( \int_0^{T_N^{Z^z}}  \lambda(|Z_s^z|) ds
  \right)\right]= \E \left[\exp \left(\sum_{k=0}^{N} \lambda(n)  L_{Z^z}^N(n) 
  \right) \right], 
\]
with $Z^z=X^x$ or $Y^y$, this yields:
\[
 \E \left[\exp \left( \int_0^{T_N^\Gr}  \lambda(|X_s^x|) ds \right)\right]\leq
 \E \left[\exp \left( \int_0^{T_N^\Hr}  \lambda(|Y_s^y|) ds \right)\right]<+\infty.
\] 
Then the probabilistic representation of Theorem \ref{hitting-cont} gives the existence of a positive super-harmonic $\tilde W$ on $\Vr^\Gr$ satisfying 
$\tilde \Delta_{\Gr,m} \tilde W(x) \geq \lambda (|x|)\tilde W(x), x\in \Vr$.
Corollary \ref{cor-lyap-spec} and letting $\ve \to 0$ finally imply
\[
 \inf \sigma_{\rm ess} (\Delta_\Gr) \geq \inf \sigma_{\rm ess} (\Delta_\Hr).
\]
The proof  for the bottom of the spectrum is similar.
\end{proof}

\begin{remark} When $\Hr$ is a weakly spherically symmetric graph, we give a 
direct proof of Theorem \ref{thm-comparaison-general}. 
Let $\varepsilon>0$. By Theorem \ref{thm-reverse}, there exist
$n_0:=n_0(\ve)$ and  a positive non-increasing function $W$ on $\N$
such that,  
\[
\tilde \Delta_\Hr W(|y|^\Hr)\geq \lambda(|y|^\Hr)  W(|y|^\Hr).
\]
where
 $\lambda(n):= (\lambda_{\Hr,\rm ess}^0-\ve) \bone_{n\geq n_0}, n\geq 0$.
For $\Ar=\Gr,\Hr$, let $\Delta_{\bd,\Ar}$ be defined by 
\[
 \Delta_{\bd,\Ar}:= \frac{1}{\widetilde \deg^\Ar(\cdot)} \Delta_{\Ar}.
\]
For $f$ a radial function on $\Hr$, we have
\[
\tilde \Delta_{\bd,\Hr}f(n):=\tilde p_+^\Hr(n) (f(n)-f(n+1)) + \tilde
p_-^\Hr(n) (f(n)-f(n-1)). 
\]
Therefore, for $x\in \Vr^\Gr$, $y\in \Vr^\Hr$ such that $|x|^\Gr=|y|^\Hr=n$, 
\begin{align*}
 \tilde\Delta_{\Hr}W(|y|^\Hr)= \widetilde \deg^\Hr(y) \tilde
 \Delta_{\bd,\Hr}W(|y|^\Hr) \leq   \widetilde \deg^\Gr(x) \tilde
 \Delta_{\bd,\Gr}W(|x|^\Gr) = \tilde \Delta_{\Gr}W(|x|^\Gr). 
\end{align*}
Indeed we have $\widetilde \deg^\Hr(y)\leq \widetilde \deg^\Gr(x)$ and $\tilde
\Delta_{\bd,\Hr}W(|y|^\Hr)\leq \tilde \Delta_{\bd,\Gr}W(|x|^\Gr)$ since
$\tilde p_+^\Gr(x)\geq \tilde p_+^\Hr(y)$, $\tilde p_-^\Gr(x)\leq
\tilde p_-^\Hr(y)$ and $W$ is non-increasing. 
The end of the proof is now the same as before.
\end{remark}

\begin{remark}
Theorem \ref{thm-comparaison-general} is an improvement of
\cite[Theorem 4]{KLW} in three important directions.  First,
contrary to the latter, our result applies also to the bottom
of the essential spectrum. Second, they suppose that one of the graph
is weakly spherically symmetric. Third, our hypothesis is weaker, even 
for the comparison of the bottom of spectra. Therefore, the condition
called \emph{stronger curvature growth} introduced in \cite{KLW} and
which can be written as  
\beq\label{curvature-growth} 
\deg_+^\Gr(x)\geq \deg_+^\Hr(y) \textrm{ and } \deg_-^\Gr(x) \leq
\deg_-^\Hr(y) , \, x\in \Vr^\Gr, y\in \Vr^\Hr, \,  |x|^\Gr=|y|^\Hr 
\end{equation}
 is not the optimal one to compare the bottom of the spectrum.
\end{remark}

Here, we first recall that 
\[
 \inf \sigma(\Delta_{\Tc_d,1})=\inf \sigma_{\rm ess}( \Delta_{\Tc_d,1}) = d+1-2\sqrt{d}
\]
and 
\[
 \inf \sigma(\Delta_{\Tc_d,\eta})=\inf \sigma_{\rm ess}( \Delta_{\Tc_d,\eta}) =  1-\frac{2\sqrt{d}}{d+1}.
\]
where $\Tc_d$ denotes the simple $d$-ary tree.

\begin{example}
Let $\Gr:=(\Vr,\Er)$ be a simple tree such that  each vertex $x$ satisfies
$\eta_+(x)\in\{\alpha,\beta\}$ with $\alpha\leq \beta$, see Figure
\ref{f:ex107}.
A direct application of Theorem \ref{thm-comparaison-general} gives
\[
\inf \sigma( \Delta_{\Tc_\alpha,1})\leq \inf \sigma( \Delta_{\Gr,1}) \leq
\inf \sigma_{\rm ess}( \Delta_{\Gr,1})  \leq  \inf \sigma_{\rm ess}(
\Delta_{\Tc_\beta,1})=  \inf \sigma(
\Delta_{\Tc_\beta,1})
\] 
and 
\[
\inf \sigma( \Delta_{\Tc_\alpha,\eta})\leq \inf \sigma( \Delta_{\Gr,\eta})
\leq  \inf \sigma_{\rm ess}( \Delta_{\Gr,\eta})  \leq  \inf
\sigma_{\rm ess}( \Delta_{\Tc_\beta,\eta})=  \inf\sigma( \Delta_{\Tc_\beta,\eta}),
\]
where $\Tc_\alpha$ (resp.\ $\Tc_\beta$) denotes
 the simple $\alpha$-ary
(resp.\ $\beta$-ary) tree. Only the part with
$\Delta_{\Gr,1}$ was covered by \cite[Theorem 4]{KLW}.
\end{example}

\renewcommand\a{1}
\renewcommand\b{1}
\renewcommand\c{1}
\begin{figure}
\begin{tikzpicture}
\fill[color=black](0,0)circle(.7mm);
\foreach \x in {{-\a},0,\a} {
\draw (0,0)--(3*\x,1);
\fill[color=black](3*\x,1)circle(.7mm);
\foreach \y in {{-\b},\b} {
\fill[color=black](3*\x+\y/2,2)circle(.5mm);
\draw (3*\x,1)--(3*\x+\y/2,2);
\foreach \z in {{-\c},\c} {
\fill[color=black](3*\x+\y/2+\z/6,3)circle(.3mm);
\draw (3*\x+\y/2,2)--(3*\x+\y/2+\z/6,3);
}
}
}
\fill[color=black](-3.5,3)circle(.3mm);
\draw(-3.5,3)--(-3.5, 2);
\fill[color=black](0.5,3)circle(.3mm);
\draw(.5,3)--(.5, 2);
\fill[color=black](3.5,3)circle(.3mm);
\draw(3.5,3)--(3.5, 2);
\fill[color=black](2.5,3)circle(.3mm);
\draw(2.5,3)--(2.5, 2);
\fill[color=black](0,2)circle(.5mm);
\draw(0,2)--(0, 1);
\fill[color=black](1/6.25,3)circle(.3mm);
\draw(0,2)--(1/6.25,3);
\fill[color=black](-1/6.25,3)circle(.3mm);
\draw(0,2)--(-1/6.25,3);
\path(-5, 0) node {$S_0$};
\path(-5, 1) node {$S_1$};
\path(-5, 2) node {$S_2$};
\path(-5, 3) node {$S_3$};
\end{tikzpicture}
\caption{{Tree with $2$ or $3$ sons at each generation}}\label{f:ex107}
\end{figure}
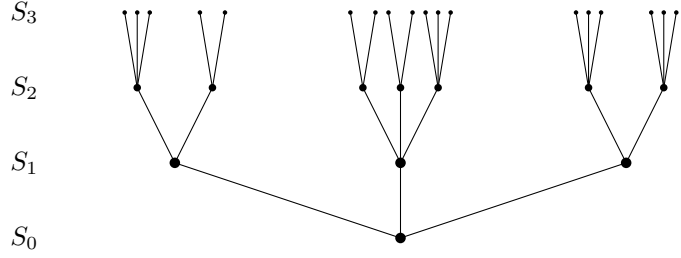

  Note that in the setting of the previous example, it is possible to have:
\[
\inf \sigma( \Delta_{\Gr,\eta}) < \inf \sigma_{\rm ess}( \Delta_{\Gr,\eta})
\] and an infinite number of eigenvalues below the  essential spectrum
can occur
(see \cite{Surchat}).

As a corollary of Theorem \ref{thm-comparaison-general}, we extend
Corollary 6.7 in \cite{KLW} to the case of the essential spectrum. 
\begin{corollary}\label{cor-spectrum-wss}
Let $\Gr$ and $\Hr$ be two weakly spherically symmetric graphs which
have the \emph{same curvature growth} in the sense of \cite{KLW}; that is: 
\[
\deg_+^\Gr(x)= \deg_+^\Hr(y) \textrm{ and } \deg_-^\Gr(x)=  \deg_-^\Hr(y) , \, x\in \Vr^\Gr, y\in \Vr^\Hr, \,  |x|^\Gr=|y|^\Hr. 
\]
Then
\[
 \inf \sigma (\Delta_\Gr) = \inf \sigma (\Delta_{\Hr}) \textrm{ and }  \inf \sigma_{\rm ess} (\Delta_\Gr) = \inf \sigma_{\rm ess} (\Delta_{\Hr}).
\]
\end{corollary}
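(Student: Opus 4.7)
The plan is to deduce this corollary directly from Theorem \ref{thm-comparaison-general} by applying it twice with the roles of $\Gr$ and $\Hr$ reversed. The key observation is that the "same curvature growth" hypothesis, consisting of the two equalities $\deg_+^\Gr(x) = \deg_+^\Hr(y)$ and $\deg_-^\Gr(x) = \deg_-^\Hr(y)$ whenever $|x|^\Gr = |y|^\Hr$, trivially implies the stronger weak-curvature growth condition \eqref{eq-condition-good-coupling} in both directions: both $\Gr$ has stronger weak-curvature growth than $\Hr$, and $\Hr$ has stronger weak-curvature growth than $\Gr$, since the ratios $\deg_+/\deg_-$ coincide on corresponding spheres.

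For the bottom of the spectrum, Theorem \ref{thm-comparaison-general} applied in the first direction gives $\inf\sigma(\Delta_\Gr) \geq \inf\sigma(\Delta_\Hr)$, while applied in the opposite direction (with $\Hr$ playing the role of the graph with stronger weak-curvature growth) yields $\inf\sigma(\Delta_\Hr) \geq \inf\sigma(\Delta_\Gr)$; together these force equality. For the essential spectrum, the same bidirectional argument applies, provided one verifies the ``moreover'' hypothesis of Theorem \ref{thm-comparaison-general} in each direction. Since $\Gr$ and $\Hr$ are both weakly spherically symmetric by hypothesis, this reduces to checking that $m^\Gr(\Vr^\Gr) = +\infty$ and $m^\Hr(\Vr^\Hr) = +\infty$; this is the standing assumption needed so that the essential spectrum comparison is meaningful. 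Alternatively, one can invoke Theorem \ref{cor-inf-sym} to reduce the problem to the $1$-dimensional Laplacians $\Delta_{\Gr_\N}$ and $\Delta_{\Hr_\N}$, whose algebraic generators coincide by formula \eqref{e:GenN} since they depend only on $\deg_\pm$.

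There is no genuine obstacle here: the corollary is essentially a ``two-sided'' rephrasing of Theorem \ref{thm-comparaison-general}, and the argument is immediate once one notices that equality of curvatures gives both inequalities in \eqref{eq-condition-good-coupling} simultaneously. The only detail worth mentioning is the infinite-total-mass hypothesis in the essential spectrum part, which is benign in the weakly spherically symmetric setting treated here.
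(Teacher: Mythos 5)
Your main argument is correct and is exactly the intended one: since equalities $\deg_+^\Gr=\deg_+^\Hr$, $\deg_-^\Gr=\deg_-^\Hr$ on matching spheres trivially give the stronger weak-curvature growth condition \eqref{eq-condition-good-coupling} in both directions, Theorem \ref{thm-comparaison-general} applied twice (once with the roles of $\Gr$ and $\Hr$ swapped) yields the two inequalities that force equality, and since both graphs are weakly spherically symmetric, the ``moreover'' hypothesis for the essential spectrum reduces to $m(\Vr)=+\infty$, which is the standing assumption needed for Theorem \ref{cor-inf-sym} to hold.

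One small caveat about your alternative route via Theorem \ref{cor-inf-sym}: coincidence of the \emph{algebraic} generators in \eqref{e:GenN} is not by itself enough, because $\Delta_{\Gr_\N}$ and $\Delta_{\Hr_\N}$ act in the a priori different Hilbert spaces $\ell^2(\N, m_\N^\Gr)$ and $\ell^2(\N, m_\N^\Hr)$ with $m_\N^\Ar(n)=m^\Ar(S_n^\Ar)$. To close this gap, observe from Lemma \ref{1dim}(a) that $m_\N(n+1)=m_\N(n)\,\deg_+(n)/\deg_-(n+1)$, so the sequence $(m_\N(n))_n$ is determined by $\deg_\pm$ up to the overall constant $m_\N(0)$; hence $m_\N^\Gr$ and $m_\N^\Hr$ agree up to a positive scalar, which induces a trivial unitary equivalence between the two $1$-dimensional Laplacians and makes the alternative argument complete.
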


In particular, on a fixed simple weakly spherically symmetric graph,
the bottoms of the spectrum and the essential spectrum of the
combinatorial Laplacian $\Delta_1$ 
do not change if one adds or removes edges inside the spheres $S_n$.

\begin{example}
Let $\Gr:=(\Vr,\Er)$ be a simple infinite bipartite graph and  $x_0
\in \Vr$ such that $\eta_+(x_0)=2, \eta_-(x_0)=0$ and  
\begin{align}\label{e:line}
(\eta_+(x)=2, \eta_-(x) =1) \textrm{ or } (\eta_+(x)=4, \eta_-(x)=2) ,
\, x\in \Vr, x\neq x_0. 
\end{align}
Then $(\Vr,\Er, \eta)$ is a weakly spherically symmetric graph. Thus 
Corollary \ref{cor-spectrum-wss} gives that:
\[
\inf \sigma( \Delta_{\Tc_2,\eta})= \inf \sigma( \Delta_{\Gr,\eta}) =
\inf \sigma_{\rm ess}( \Delta_{\Gr,\eta})  =  \inf \sigma_{\rm ess}(
\Delta_{\Tc_2,\eta}). 
\]
and Theorem \ref{thm-comparaison-general} that:
\[
\inf \sigma( \Delta_{\Tc_2,1})\leq \inf \sigma( \Delta_{\Gr,1}) \leq
\inf \sigma_{\rm ess}( \Delta_{\Gr,1})  \leq   2 \inf \sigma_{\rm
  ess}( \Delta_{\Tc_2,1}). 
\]
Note that, if moreover one supposes that
\[
\min \{ \deg_+ (x), x\in S_n\} \leq \max \{\deg_-(x), x\in S_n\},
\]
a direct application of \cite[Theorem 4]{KLW} gives only 
that $\inf \sigma(\Delta_{\Gr,1})\geq 0$.
\end{example}

\renewcommand\a{1}
\renewcommand\b{1}
\renewcommand\c{1}
\begin{figure}
\begin{tikzpicture}
\fill[color=black](0,0)circle(.7mm);
\draw (0,0)--(-1,1);
\draw (0,0)--(1,1);
\fill[color=black](1,1)circle(.7mm);
\fill[color=black](-1,1)circle(.7mm);
\draw (0,2)--(-1,1);
\draw (-2,2)--(-1,1);
\draw (0,2)--(1,1);
\draw (2,2)--(1,1);
\fill[color=black](-2,2)circle(.7mm);
\fill[color=black](0,2)circle(.7mm);
\fill[color=black](2,2)circle(.7mm);
\draw (0,2)--(-1,3);
\draw (0,2)--(-.33,3);
\draw (0,2)--(1,3);
\draw (0,2)--(.33,3);
\fill[color=black](-1,3)circle(.7mm);
\fill[color=black](-.33,3)circle(.7mm);
\fill[color=black](.33,3)circle(.7mm);
\fill[color=black](1,3)circle(.7mm);
\draw (-2,2)--(-1,3);
\draw (-2,2)--(-3,3);
\fill[color=black](-3,3)circle(.7mm);
\draw (2,2)--(-3,3);
\draw (2,2)--(2,3);
\fill[color=black](2,3)circle(.7mm);
\draw (-3.33,4)--(-3,3);
\draw (-2.67,4)--(-3,3);
\draw (-3.33+.22,4)--(-3,3);
\draw (-2.67-.22,4)--(-3,3);
\fill[color=black](-3.33,4)circle(.5mm);
\fill[color=black](-3.33+.22,4)circle(.5mm);
\fill[color=black](-2.67,4)circle(.5mm);
\fill[color=black](-2.67-.22,4)circle(.5mm);
\draw (-1.33,4)--(-1,3);
\draw (-0.67,4)--(-1,3);
\draw (-1.33+.22,4)--(-1,3);
\draw (-0.67-.22,4)--(-1,3);
\fill[color=black](-1.33,4)circle(.5mm);
\fill[color=black](-1.33+.22,4)circle(.5mm);
\fill[color=black](-0.67,4)circle(.5mm);
\fill[color=black](-0.67-.22,4)circle(.5mm);
\draw (+1.33-.22,4)--(1,3);
\draw (0.67+.22,4)--(1,3);
\fill[color=black](1.33-.22,4)circle(.5mm);
\fill[color=black](0.67+.22,4)circle(.5mm);
\draw (+2.33-.22,4)--(2,3);
\draw (1.67+.22,4)--(2,3);
\fill[color=black](2.33-.22,4)circle(.5mm);
\fill[color=black](1.67+.22,4)circle(.5mm);
\draw (+1.33-.44,4)--(.33,3);
\draw (-2.67,4)--(.33,3);
\draw (+1.33-.22,4)--(-.33,3);
\draw (-3.33,4)--(-.33,3);
\path(-5, 0) node {$S_0$};
\path(-5, 1) node {$S_1$};
\path(-5, 2) node {$S_2$};
\path(-5, 3) node {$S_3$};
\path(-5, 4) node {$S_4$};
\end{tikzpicture}
\caption{{A weakly spherically symmetric graph satisfying
    \eqref{e:line}}}%\label{f:ex14} 
\end{figure}
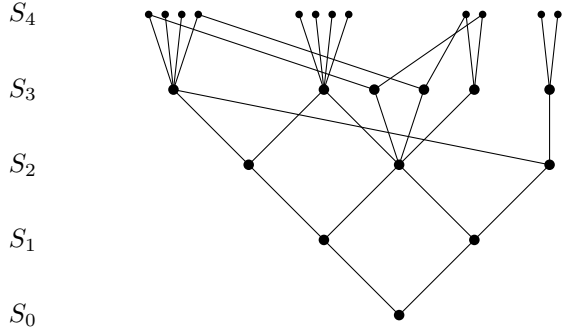

\section{Stochastic completeness}\label{s:11}
By definition, a graph $\Gr:=(\Vr^\Gr,\Er^\Gr,m^\Gr)$ is said to be
\emph{stochastically complete}  
if for all $x\in \Vr$, $\PR( e(X^x)<+\infty)=0$ where $X^x$ is the
minimal right continuous Markov process constructed in section
\ref{s:markov-process}. Otherwise, it is said to be
\emph{stochastically incomplete}. 
First we consider weakly spherically symmetric graph. The following
result was already included in  Theorem 5 in \cite{KLW} (except that
we slightly generalize their notion of spherically symmetric graphs). 
We provide a direct stochastic proof. 
\begin{theorem}\label{thm-1-dim-SC}
Let $\Gr:=(\Vr^\Gr,\Er^\Gr,m^\Gr)$ be a \emph{weakly spherically
  symmetric graph} and let $\Gr_{\N}:=(\N, \Er_\N, 
m_\N)$ where $\Er_\N$ and $m_\N$ are defined as in Proposition \ref{prop-m-1d}. 
Then $\Gr$ is \emph{stochastically complete} if and only if $\Gr_\N$ is.
\end{theorem}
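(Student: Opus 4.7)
My plan is to identify the explosion time of the full Markov chain $(X_t)_{t\geq 0}$ with the explosion time of its radial projection $(|X_t|)_{t \geq 0}$, and then invoke Proposition \ref{prop-m-1d}, which tells us that on a weakly spherically symmetric graph, $(|X_t|)_{t\geq 0}$ is itself a continuous time Markov chain whose generator is $-\Delta_{\Gr_\N}$.

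The heart of the argument is to prove that for every $x\in \Vr^\Gr$, almost surely $e(X^x) = e(|X^x|)$. One inequality is essentially tautological: every jump time of $(|X^x_t|)$ is a jump time of $(X^x_t)$, so if the jump times of $(|X^x_t|)$ accumulate at some finite $T$, the same subsequence of jump times of $X^x$ accumulates at $T$, giving $e(X^x) \leq e(|X^x|)$. For the reverse, suppose for contradiction that $e(X^x) < e(|X^x|)$ on some event of positive probability. On this event, only finitely many of the jumps of $X^x$ accumulating to $e(X^x)$ are sphere-changing; equivalently, there exists a (random) time $s < e(X^x)$ and a (random) sphere $S_{n_0}$ such that $X^x_t \in S_{n_0}$ for all $t \in [s, e(X^x))$. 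Since each sphere $S_{n_0}$ is finite, the within-sphere jump rate $\max_{z\in S_{n_0}} \deg_0(z)$ is finite; hence the number of within-sphere jumps performed in the finite interval $[s, e(X^x))$ is dominated by a Poisson variable with finite parameter, so it is almost surely finite. This contradicts the defining property that the jump times of $X^x$ accumulate to $e(X^x)$, proving $e(X^x) \geq e(|X^x|)$.

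Having established $e(X^x) = e(|X^x|)$ a.s., the equivalence is immediate. If $x \in S_n^\Gr$, then $(|X_t^x|)_{t\geq 0}$ is the continuous time Markov chain on $\N$ associated to $-\Delta_{\Gr_\N}$ started from $n$; its explosion time is, by Proposition \ref{prop-m-1d}, precisely the explosion time of the Markov process on $\Gr_\N$ started from $n$. Therefore
\[
\PR(e(X^x) < +\infty) = \PR(e(|X^x|) < +\infty),
\]
and as $x$ ranges over $\Vr^\Gr$, $|x|$ ranges over all of $\N$ (since each connected component of $\Gr$ is infinite, every sphere $S_n^\Gr$ is nonempty for $n$ up to the diameter, and by the weak spherical symmetry actually for all $n \geq 0$). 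Taking suprema over $x$ on the left and over $n$ on the right yields that $\Gr$ is stochastically complete if and only if $\Gr_\N$ is.

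The main (and only genuinely nontrivial) obstacle is the explosion identification step, specifically showing that within-sphere oscillations alone cannot produce an explosion; this is where local finiteness and finiteness of $S_n$ enter in an essential way to bound the within-sphere jump rate. Everything else follows mechanically from Proposition \ref{prop-m-1d} and the very definition of stochastic completeness.
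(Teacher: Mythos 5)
Your proof is correct and follows essentially the same route as the paper: identify the explosion time of $X^x$ with that of its radial projection and invoke Proposition \ref{prop-m-1d}. The paper simply asserts ``clearly, one has $e(X^x)=e(|X^x|)$'', whereas you supply the justification (in particular that within-sphere oscillations cannot cause explosion because each sphere is finite), which is a worthwhile addition but not a different argument.
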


\begin{proof}
Let $n\geq0$ and $x \in S_n$ and let  $(X^x_t)_{t\geq 0}$ be the 
minimal right continuous Markov process associated to $\Gr$. 
By Proposition \ref{prop-m-1d}, $(|X^x_t|)_{t\geq 0}$ is the minimal
right continuous Markov process associated to $\Gr_\N$ and starting in
$n$. 
Clearly, one has $e(X^x)=e(|X^x|)$. The conclusion of the theorem
follows by the definition of stochastic completeness. 
 \end{proof}

The coupling argument of Proposition \ref{good-coupling} implies the following comparison result. It is an improvement of Theorem 6 in\cite{KLW}.
\begin{theorem}\label{thm-comparaison-SC}
Let $\Gr:=(\Vr^\Gr,\Er^\Gr,m^\Gr)$ and $\Hr:=(\Vr^\Hr,\Er^\Hr,m^\Hr)$
two weighted graphs. Assume $\Gr$ has stronger weak-curvature growth
than $\Hr$.  
If $\Hr$ is stochastically incomplete then so is $\Gr$.
If  $\Gr$ is stochastically complete then so is $\Hr$.
\end{theorem}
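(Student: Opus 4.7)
The two implications in the theorem are contrapositives of each other, so it suffices to prove the first: if $\Hr$ is stochastically incomplete, then so is $\Gr$. The whole argument hinges on the pathwise coupling already produced in Proposition \ref{good-coupling}, in particular the explosion comparison \eqref{e:explosion}.

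First I would fix a point $y_0\in \Vr^\Hr$ witnessing the stochastic incompleteness of $\Hr$, i.e.\ such that $\PR(e(Y^{y_0})<+\infty)>0$, where $(Y^{y_0}_t)_{t\geq 0}$ is the minimal right-continuous Markov chain of generator $-\Delta_\Hr$ starting at $y_0$. Set $N:=|y_0|^\Hr$. Since $\Gr$ is connected and each of its connected components is infinite, the sphere $S_N^\Gr$ is non-empty (a path from $S_0^\Gr$ to infinity must cross it), and I pick any $x_0\in S_N^\Gr$, so that $|x_0|^\Gr = |y_0|^\Hr$. The stronger weak-curvature growth assumption, via Proposition \ref{condition-good-coupling}, gives the hypothesis \eqref{tilde-p} needed to apply Proposition \ref{good-coupling}.

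Next I would invoke Proposition \ref{good-coupling} to build, on a common probability space, a coupling of $(X_t^{x_0})_{t\geq 0}$ and $(Y_t^{y_0})_{t\geq 0}$, continuous time Markov chains with generators $-\Delta_\Gr$ and $-\Delta_\Hr$ respectively, satisfying the almost sure inequality $e(Y^{y_0})\geq e(X^{x_0})$ given by \eqref{e:explosion}. Consequently
\[
\{e(Y^{y_0})<+\infty\}\subseteq \{e(X^{x_0})<+\infty\}\qquad \text{almost surely,}
\]
and taking probabilities yields $\PR(e(X^{x_0})<+\infty)\geq \PR(e(Y^{y_0})<+\infty)>0$. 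Hence $\Gr$ is stochastically incomplete, which is the first assertion. The second assertion is immediate by contraposition: if $\Gr$ is stochastically complete, then no $x_0$ can satisfy $\PR(e(X^{x_0})<+\infty)>0$, so no $y_0$ can witness incompleteness of $\Hr$.

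There is essentially no obstacle beyond Proposition \ref{good-coupling} itself; the only small subtlety is ensuring that the spheres $S_n^\Gr$ are non-empty for every $n$ so that one can start $X$ at a point with the same radial coordinate as $y_0$, but this follows from the connectedness of $\Gr$ together with the fact that the given $1$-dimensional decomposition is a partition into \emph{finite} sets of an infinite connected graph. One could alternatively avoid this remark altogether by first applying the argument to a starting point of $\Hr$ in $S_0^\Hr$ (always available), comparing with $x_0\in S_0^\Gr$; the Markov property then transports stochastic (in)completeness to all other starting points, since incompleteness at a single vertex is equivalent to incompleteness in the usual global sense under the standing assumption that each connected component of the graph is infinite.
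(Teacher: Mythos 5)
Your proof is correct and follows essentially the same route as the paper: invoke the pathwise coupling of Proposition \ref{good-coupling} to compare explosion times via \eqref{e:explosion}, then read off the stochastic (in)completeness statements directly from the definition. The paper's proof is merely terser; your added remarks on the non-emptiness of the spheres and on reducing to a starting point in $S_0$ are sound but not strictly necessary.
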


\begin{proof}
 Let $n\geq 0$, $x \in S_n(\Gr)$ and $y\in S_n (\Hr)$. Proposition \ref{good-coupling} provides a coupling  $(X^x_t)_{t\geq 0}$ and $(Y^y_t)_{t\geq 0}$
 of the  two minimal right continuous Markov chains on $\Gr$ and $\Hr$ starting in $x$ and $y$, respectively, such that  $e(X^x) \leq e(Y^y)$. The conclusion of the theorem follows by the definition of stochastic completeness.
\end{proof}
\appendix
\section{The Friedrichs extension}\label{s:friedrichs} 
\setcounter{equation}{0} 

In this section, we recall the construction of the Friedrichs
extension of a positive symmetric densely defined operator.  
Given a dense subspace $\Dr$ of a Hilbert space $\Hr$ and a
non-negative symmetric operator $H$ on $\Dr$, let $\Hr_1$ be the
completion of $\Dr$ under the norm given by $\Qr(\varphi)^2=\langle
H\varphi, \varphi\rangle+\|\varphi\|^2$.  The domain of the
Friedrichs extension of $H$ is given by 
\begin{align*}
\Dc(H^\Fr)&=\{f\in \Hr_1 \mid
\Dr\ni g\mapsto \langle Hg,f\rangle+\langle g,f\rangle \mbox{ extends
  to a norm continuous} 
\\
&\quad \quad \mbox{ function on } \Hr\}
\\
&= \Hr_1\cap \Dc(H^*).
\end{align*}
For each
$f\in\Dc(H^\Fr)$, there is a unique $u_f$ such that $\langle
Hg,f\rangle + \langle g,f\rangle  = \langle g,u_f\rangle$, by Riesz'
Theorem. The Friedrichs extension of $H$, is given by $H^\Fr
f:=u_f-f$. It is a self-adjoint extension of $H$, e.g., \cite[Theorem
X.23]{RS}.  

We now describe the domain of the adjoint of the discrete
Laplacian. This is well-known, e.g., \cite{CTT, KL2}. Let  $\Gr=(\Vr,
\Er, m)$ be a 
weighted graph. We have:
\begin{align*}
  \Dc\bigl((\Delta_{\Gr}|_{\Cc_c(\Vr)})^*\bigr)=\Big\{f\in \ell^2(\Vr, m),&
\\
&\hspace*{-2cm}   x\mapsto \frac{1}{m(x)} \sum_{y\in \Vr} \Er(x,y)
   (f(x)- f(y))\in\ell^2(\Vr, m)\Big\}.
\end{align*}
Then, given $f\in\Dc((\Delta_{\Gr}|_{\Cc_c(\Vr)})^*)$, one has:
\begin{eqnarray*}
  \left((\Delta_{\Gr}|_{\Cc_c(\Vr)})^*
    f\right)(x)=\frac{1}{m(x)}\sum_{y\in \Vr}\Er(x,y)(f(x)-f(y)),
\end{eqnarray*}
for all $x\in \Vr$. We recall that $\Hr_1$ here is the completion of
$\Cc_c(\Vr)$ under the norm:
\[\|f\|_{\Hr_1}^2:= \frac{1}{2}\sum_{x,y \in \Vr}\Er(x,y)|f(x)-f(y)|^2+\|f\|^2.\]
By definition, the operator
$\Delta_{\Gr}|_{\Cc_c(\Vr)}$ is
\emph{essentially self-adjoint} if its closure is equal to its
adjoint. A review of recent developments of essential
  self-adjointness may be found in \cite{Go2}.

\section{Lyapunov functions, Super-Poincar\'e inequality and the
  infimum of the essential spectrum}\label{section-SPI} 
\setcounter{equation}{0}  

In this section, we explain how to use the above Lyapunov functions
so as to obtain a lower bound on the infimum of the essential spectrum and
in some cases its emptiness. This is a straightforwardly adaptation of
the continuous setting.   In our discrete setting, this
  approach is not strictly necessary  to obtain our results (see
  Remark \ref{r:A7}) on the essential spectra. We have included it for
  the sake of completeness and because it provides some insights and  a
  characterization  for the lower bound of the essential spectrum.
We shall rely on the following Super-   Poincar\'e Inequality, which
was introduced by Wang (see \cite{Wan1, Wan2, Wan3}).

\begin{definition}
 We say that \emph{Super-  Poincar\'e Inequality of parameter} $s_0\in
 \R$ holds true, if  there is a function
$h:\Vr\to(0,\infty)$ such that $m(|h|^2)=1$ and some positive
non-increasing functions $\beta_h:(s_0,\infty)\to(0,\infty)$, such that
\begin{equation}\label{SPI}
  \mbox{SPI }(s_0)\quad \quad m(|f|^2) \leq s \,  m\left( \overline
  f\,\Delta_m f\right) + 
  \beta_h(s) \, m(|f| h)^2 \textrm{ for all } s>s_0 
\end{equation}
and $f\in \Cc_c(\Vr)$. 
\end{definition} 
 
Where, by abuse of notation, we  wrote:
\[m(f):=\sum_{x\in \Vr} f(x) m(x).\]

\begin{remark}\label{beta-decr}
 If SPI $(s_0)$ holds for some not necessarily non-increasing function
 $\beta_h$, then, for any $\mu>s_0$, SPI $(\mu)$  holds for the
 non-increasing function  $\beta'_h(s)=\inf_{\mu\leq s}\beta_h(s)$. 
\end{remark}

We start by showing how the existence of Lyapunov functions implies SPI.

\begin{theorem}\label{thm-SPI}
Take $\psi$ a  non-negative function, $b\geq 0$, and some finite set
$B_{r_0}$. Denote by 
\[
\Psi(r)= \inf \{\psi (x), x\in B_r^c\} \textrm{ and } s_0 =
\left(\lim_{r\to \infty} \Psi(r)\right)^{-1} \in [0, \infty).  
\]
Suppose that $\Psi(r)>0$ for $r$ large enough and that $W$ is a positive
function such that 
\begin{equation}\label{lyap-W}
\Delta_m W \geq \psi \times W - b \bone_{B_{r_0}} 
\end{equation}
Let $h:\Vr\to(0,\infty)$ such that $\|h\|_m=1$ and   let $a_h(r)=\inf
\{h(x)^2 m(x), x\in B_r\} $.  
Then SPI $(s_0)$ holds true with 
\[\beta_h (s)=
\left(1+\frac{bs}{\alpha_{r_0} }\right) \frac{1}{a_h(
  \Psi^{-1}(\frac{1}{s}))},\] 
where $\alpha_{r_0}=\inf\{W(x),x\in B_{r_0}\}$. 
\end{theorem}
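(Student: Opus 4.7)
My plan is to combine the Hardy inequality of Proposition~\ref{prop:hardy} with the Lyapunov bound \eqref{lyap-W}, and then to localize on $\Vr$ according to whether $\psi(x) < 1/s$ or $\psi(x) \geq 1/s$. First, dividing \eqref{lyap-W} by the positive function $W$ and using $W(x) \geq \alpha_{r_0}$ on $B_{r_0}$ gives the pointwise bound $\tilde\Delta_m W / W \geq \psi - (b/\alpha_{r_0})\bone_{B_{r_0}}$. Plugging this into the Hardy inequality \eqref{Hardy} yields, for every $f \in \Cc_c(\Vr)$,
\[m(|f|^2 \psi) \leq m(\overline f\, \Delta_m f) + \frac{b}{\alpha_{r_0}}\, m(|f|^2 \bone_{B_{r_0}}).\]

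Second, for $s > s_0 = (\lim_{r \to \infty}\Psi(r))^{-1}$ one has $1/s < \lim_{r\to\infty}\Psi(r)$, and since $\Psi$ is non-decreasing (infimum over a shrinking set) there exists a smallest $r := \Psi^{-1}(1/s)$ with $\Psi(r) \geq 1/s$; I restrict to the regime (which contains a right neighbourhood of $s_0$) in which moreover $r \geq r_0$. On $B_r^c$ we then have $\psi \geq 1/s$, whence $m(|f|^2 \bone_{B_r^c}) \leq s\, m(|f|^2 \psi)$. For the ball piece, the lower bound $h(x)^2 m(x) \geq a_h(r)$ on $B_r$ combined with the elementary inequality $\sum_i c_i^2 \leq (\sum_i c_i)^2$ for non-negative $c_i$ gives
\[m(|f|^2 \bone_{B_r}) = \sum_{x \in B_r}\frac{(|f(x)|\,h(x)\,m(x))^2}{h(x)^2 m(x)} \leq \frac{1}{a_h(r)}\, m(|f|\,h)^2,\]
and since $B_{r_0}\subseteq B_r$ the same estimate bounds $m(|f|^2 \bone_{B_{r_0}}) \leq (1/a_h(r))\, m(|f|h)^2$ as well.

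Combining these ingredients,
\[m(|f|^2) \leq \frac{1}{a_h(r)}\, m(|f|h)^2 + s\, m(|f|^2 \psi) \leq s\, m(\overline f\, \Delta_m f) + \frac{1 + bs/\alpha_{r_0}}{a_h(r)}\, m(|f|h)^2,\]
which is exactly SPI$(s_0)$ with $\beta_h(s) = (1+bs/\alpha_{r_0})/a_h(\Psi^{-1}(1/s))$; should this function fail to be non-increasing on the full range $(s_0,\infty)$, Remark~\ref{beta-decr} lets us replace it by its non-increasing envelope. No step in this argument is genuinely difficult; the point requiring the most care is the pointwise identity $|f(x)|^2 m(x) = (|f(x)|h(x)m(x))^2/(h(x)^2 m(x))$ followed by non-negativity of the summands, which is what converts the $\ell^2$-type quantity $m(|f|^2 \bone_{B_r})$ into the $\ell^1$-type quantity $m(|f|h)^2$ appearing on the right-hand side of the SPI, together with the bookkeeping that ensures the Lyapunov correction on $B_{r_0}$ is absorbed into a single $1/a_h(r)$ factor by taking $r \geq r_0$.
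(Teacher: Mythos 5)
Your proof is correct and follows essentially the same route as the paper's: Hardy's inequality applied to the Lyapunov bound to control $m(|f|^2\psi)$, the splitting of $\Vr$ into $B_r$ and $B_r^c$ with $r=\Psi^{-1}(1/s)$, and the local Super--Poincar\'e inequality $m(|f|^2\bone_{B_r})\leq a_h(r)^{-1}m(|f|h)^2$ on the finite ball. The only difference is cosmetic (you apply Hardy before splitting, the paper splits first), and the small caveat about forcing $r\geq r_0$ is implicitly present in the paper's argument as well.
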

We follow the proof in \cite{CGWW}[Section 2]. 

\begin{proof}
Set $r\geq r_0$ such that $\Psi(r)>0$:
\begin{align*}
\Vert f\Vert^2 = m (|f|^2) =& m (\frac{|f|^2 \psi}{\psi}
\bone_{B_r^c}) + m (|f|^2  \bone_{B_r}) 
                           \leq  \frac{1}{\Psi(r)} m (|f|^2 \psi
                           \bone_{B_r^c}) + m (|f|^2  \bone_{B_r})\\ 
                            \leq & \frac{1}{\Psi(r)} m (|f|^2 \psi) + m
                            (|f|^2  \bone_{B_r})\\ 
                           \leq & \frac{1}{\Psi(r)} m \left(|f|^2
                             \left(\frac{\Delta_m W}{W} + \frac{b
                                 \bone_{B_{r_0}}}{W}\right)\right)  +
                           m (|f|^2  \bone_{B_r})\\ 
                          \leq & \frac{1}{\Psi(r)}m \left(|f|^2
                            \left(\frac{\Delta_m W}{W} \right)\right) +
                          \left(\frac{b}{\Psi(r) \inf \{W(x),x\in
                              B_{r_0}\}}+ 1\right) m (|f|^2
                          \bone_{B_r}).        
\end{align*}
We concentrate on the second term. Let $h:\Vr \to (0, \infty)$ such that
$m (h^2)=1$. Since the set $B_r$ is finite, 
\[
m(|f| h \bone _{B_r})^2 = \left(\sum_{x\in B_r} |f(x)| h(x) m(x)
\right)^2 \geq  \sum_{x\in B_r} |f(x)|^2 h(x)^2 m(x)^2 \geq \left( \inf
  _{x\in B_r} h(x)^2 m(x) \right) m (|f|^2 \bone_{B_r}) 
\] 
which gives the following local
Super-Poincar\'e Inequality: 
\[
m(|f|^2 \bone_{B_r}) \leq \left( \inf _{x\in B_r} m(x) h(x)^2 \right)^{-1} m(|f| h)^2. 
\]
Therefore, by combining the above estimate and the Hardy inequality
\eqref{Hardy}, we get:
\[
m(|f|^2) \leq \frac{1}{\Psi(r)} m(\, \overline f \Delta_m f) +
\left(\frac{b}{\Psi(r) \inf \{W(x),x\in B_{r_0}\} }+ 1\right) \left(
  \inf _{x\in B_r} 
  m(x) h(x)^2 \right)^{-1} m(|f| h)^2. 
\] 
Finally, this yields the SPI $(s_0)$ with $\beta_h(s)$ defined as in
the theorem. 
\end{proof}

\begin{remark}
Note that if the constant $b=0$ in the above, the function
$\beta_h(s)$ is then given by 
\[
\beta_h (s)= \frac{1}{a_h( \Psi^{-1}(\frac{1}{s}))}.
\]  
\end{remark}

We turn now to the  equivalence between the Super-Poincar\'e
Inequality and the infimum of the essential spectrum of the operator
(see Theorem 2.2 in \cite{Wan3}).  
\begin{theorem}\label{thm-SPI-bas-sp}
Let $s_0>0$.  Then the following assertions are equivalent:
\begin{enumerate}
 \item $\sigma_{\rm ess} (\Delta_m) \subset [\frac{1}{s_0}, \infty )$.
\item There exists a positive function $h$ such that $m(h^2)=1$ 
 and a non-increasing function $\beta_h:(s_0,\infty )\to(0,\infty )$ such that (\ref{SPI}) holds for all $s>s_0$.

\item For any positive function $h$ such that $m(h^2)=1$,
 there exists a non-increasing function $\beta_h:(s_0,\infty
 )\to(0,\infty )$ such that (\ref{SPI}) holds for all $s>s_0$. 
\end{enumerate}
In particular,  $\sigma_{\rm ess} (\Delta_m)=\emptyset$ if and only if
there exists some functions $h$ and $\beta_h$ for which (\ref{SPI})
holds for $s>0$. 
\end{theorem}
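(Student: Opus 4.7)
The plan is to prove the cyclic chain $(3)\Rightarrow (2)\Rightarrow (1)\Rightarrow (3)$. The first implication $(3)\Rightarrow(2)$ is immediate: one simply picks any strictly positive $\ell^{2}(m)$--function (for example a normalized exponential weight on the vertices) as the reference $h$.

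For $(2)\Rightarrow (1)$, my approach is to combine the Persson characterization \eqref{e:Persson} with the finiteness of $m(h^{2})$. Fix $h$ and a non-increasing $\beta_h$ satisfying SPI$(s_0)$. For any finite $K\subset\Vr$ and any $f\in\Cc_c(K^c)$ with $\|f\|_m=1$, Cauchy--Schwarz gives
\[
m(|f|h)^{2}\leq \|f\|_{m}^{2}\,m(h^{2}\bone_{K^{c}})=m(h^{2}\bone_{K^{c}})=:\varepsilon_{K},
\]
and $\varepsilon_K\to 0$ as $K$ exhausts $\Vr$ because $m(h^{2})=1$. Substituting into SPI at a fixed $s>s_0$ yields $1-\beta_h(s)\varepsilon_K\leq s\,\langle f,\Delta_m f\rangle_m$, which by \eqref{e:Persson} forces $\inf\sigma_{\rm ess}(\Delta_m)\geq (1-\beta_h(s)\varepsilon_K)/s$. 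For each fixed $s$, first let $K\uparrow\Vr$ to get $\inf\sigma_{\rm ess}(\Delta_m)\geq 1/s$, then let $s\downarrow s_0$ to obtain $\inf\sigma_{\rm ess}(\Delta_m)\geq 1/s_0$.

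For $(1)\Rightarrow (3)$ the essential work is already done in the two preceding theorems. Write $\lambda^{0}_{\rm ess}=\inf\sigma_{\rm ess}(\Delta_m)\geq 1/s_0$. For every $\varepsilon>0$, Theorem~\ref{thm-reverse}\,(b) supplies a positive function $W_{\varepsilon}$ together with constants $N_{1}(\varepsilon),C(\varepsilon)>0$ so that
\[
\tilde\Delta_m W_{\varepsilon}(x)\geq (\lambda^{0}_{\rm ess}-\varepsilon)W_{\varepsilon}(x)-C(\varepsilon)\bone_{B_{N_{1}(\varepsilon)}}(x),
\]
which is exactly the hypothesis of Theorem~\ref{thm-SPI} with the constant $\psi\equiv \lambda^{0}_{\rm ess}-\varepsilon$. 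The crucial feature is that Theorem~\ref{thm-SPI} is valid for \emph{every} positive $h$ satisfying $m(h^{2})=1$, and its output $\beta_h$ depends explicitly on this $h$ through the quantity $a_h$. Given the prescribed $h$ and any $s>s_0$, one has $1/s<1/s_0\leq \lambda^{0}_{\rm ess}$, so one may choose $\varepsilon=\varepsilon(s)>0$ with $1/(\lambda^{0}_{\rm ess}-\varepsilon)<s$; Theorem~\ref{thm-SPI} then produces a finite constant for which SPI holds at that $s$. Applying Remark~\ref{beta-decr} to the resulting family (which a priori need not be monotone in $s$) yields a non-increasing envelope defined on all of $(s_0,\infty)$, which is the $\beta_h$ required by $(3)$.

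The final equivalence, namely $\sigma_{\rm ess}(\Delta_m)=\emptyset$ iff SPI holds for all $s>0$, drops out by applying the above to every $s_0>0$. The only real technical point I anticipate is the $\varepsilon$--$s$ bookkeeping in $(1)\Rightarrow (3)$: the natural threshold produced by the Hardy/Lyapunov argument is $1/(\lambda^{0}_{\rm ess}-\varepsilon)$ rather than $1/\lambda^{0}_{\rm ess}$, which forces the pointwise choice of $\varepsilon(s)$ above and the subsequent non-increasing envelope step through Remark~\ref{beta-decr}.
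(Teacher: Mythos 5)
Your proposal is correct, and the implications $(3)\Rightarrow(2)$ and $(2)\Rightarrow(1)$ match the paper's proof essentially verbatim (Cauchy--Schwarz against $h\bone_{K^c}$, then Persson). The implication $(1)\Rightarrow(3)$, however, takes a genuinely different route. The paper proves it directly: it fixes $r'>r>s_0$, decomposes $f=g+k$ with $g=\bone_{[0,1/r]}(\Delta_m)f$ living in the finite-dimensional span of eigenfunctions $g_1,\dots,g_{n_r}$ below $1/r$, applies a local SPI on that finite-dimensional piece, and controls the cross terms by a truncation $\bone_{\{|g_i|\geq c_r h\}}$ together with dominated convergence in $c_r$. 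You instead invoke the Allegretto--Piepenbrink machinery of Theorem~\ref{thm-reverse}(b) to manufacture a Lyapunov function $W_\varepsilon$ below the essential spectrum threshold, feed it into the already-proved forward direction Theorem~\ref{thm-SPI} with the constant function $\psi\equiv\lambda^{0}_{\rm ess}-\varepsilon$, and then monotonize via Remark~\ref{beta-decr}. Both are valid and non-circular (Theorem~\ref{thm-reverse} relies only on Persson and the positivity-improving property of the Dirichlet semigroup, not on anything in Appendix~\ref{section-SPI}). What your route buys is conceptual economy: you reuse the AP theorem and the Lyapunov-to-SPI conversion rather than redoing a spectral-projection estimate by hand, and it makes explicit the equivalence between AP-type super-harmonic functions and SPI. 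What the paper's route buys is self-containedness of the appendix (no forward reference to Section~\ref{s:6}), an explicit $\beta_h$ built from spectral data, and--since $(1)\Rightarrow(3)$ demands the SPI for \emph{every} admissible $h$--a more transparent handle on the $h$-dependence; in your route the $h$-universality is inherited from the fact that Theorem~\ref{thm-SPI} already produces a $\beta_h$ for any $h$, which is fine but worth stating. One small technical remark: when $\psi$ is constant, the expression $\Psi^{-1}(1/s)$ appearing in the $\beta_h$ of Theorem~\ref{thm-SPI} is not literally defined, but the proof of that theorem clearly yields a finite constant by fixing any $r\geq r_0$; you implicitly use this, and it is harmless, but a reader may want it spelled out.
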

For the seek of completeness, we will give the proof of this result. We follow the proof of Theorem 3.2 in \cite{Wan2} which was originally made in the case of probability measure. The slight
difference comes from the fact that we consider the Friedrichs
extension and we are not in a essentially self-adjoint setting.
\begin{proof}
It is is clear that (c) implies (b). We show first that (b) implies
(a). Let $h$ be the positive function such that $m(h^2)=1$ and let
$f\in \Cc_c(\Vr)$. Let $0<\ve<1$, $\eta>0$ and let $B_r$ such that
$m(\bone_{B_r^c} h^2 )\leq \ve$, then, for $f\in \Cc_c(\Vr)$ such
that $m(|f|^2)=1$ and $f|_{B_r}=0$,   
\begin{eqnarray*}
1=m(|f|^2) &\leq & (s_0+\eta) \langle f,\Delta_m f \rangle_m +
\beta(s_0+\eta) m(|f| h )^2 \\ 
         &\leq &  (s_0+\eta) \langle f,\Delta_m f \rangle_m +
         \beta(s_0+\eta) m(|f|^2) m(\bone_{B_r^c} h^2 )\\ 
         &\leq &  (s_0+\eta) \langle f,\Delta_m f \rangle_m +
         \beta(s_0+\eta) \ve. 
\end{eqnarray*} 
Using \eqref{e:Persson}, we get
\[
\inf \sigma_{\rm ess} (\Delta_m) \geq \sup_{\ve>0,\eta >0} \frac{1-
  \beta(s_0+\eta) \ve}{s_0+\eta} =\frac{1}{s_0}. 
\]

Now we show that (a) implies (c). Let $h$ be a positive function
such that $m(h^2)=1$. Let $r'>r>s_0$. Since  $r>s_0$,
$\sigma (\Delta_m) \cap [0,\frac{1}{r}]$ 
 is given by a finite number of finite dimensional eigenvalues. 
Let $0\leq \lambda_1 \leq \dots \leq \lambda_{n_r}$ be these
eigenvalues (including multiplicity), $g_1, \dots , g_{n_r}$   be some
associated orthonormalized eigenfunctions, and $\mathcal H_r$ the
corresponding spanned vector space. 
Let $f\in\Cc_c(\Vr)$ and consider $g:= \bone_{[0,\frac{1}{r}]}(\Delta_m) f=
\sum_{i=1}^{n_r} m(\,\overline g_i f) g_i$  
and $k:= \bone_{(\frac{1}{r},\infty)}(\Delta_m) f$.
By construction $m(|f|^2)=m(|g|^2)+ m(|k|^2)$ and
\[m(|k|^2)\leq r\cdot m(\, \overline k \Delta_m k)\leq r\cdot m(\,
\overline f \Delta_m f).\]
Moreover, since $\mathcal H_r$ is finite dimensional, 
there is a finite $\beta_1(r)$ such that
\[
m(|u|^2) \leq \beta_1(r) m(|uh|)^2, \mbox{ for all } u\in \mathcal H_r.
\]
Let $c_r>0$ be a constant to be precised later. Then using several times
the Cauchy-Schwarz inequality, 
\begin{align*}
 m(|gh|) &\leq  \sum_{i=1}^{n_r} |m(f \overline g_i)| m(|h g_i|) \leq
 \sum_{i=1}^{n_r} m(|f g_i|)\\ 
          &\leq \sum_{i=1}^{n_r}\left( c_r m(|f|h)  +  m \left( | f g_i| \bone_{ \{|g_i| \geq c_r h\} }\right)\right)\leq n_r  c_r m(|f|h) + n_r  \ve_r^{1/2}  m( |f|^2)^{1/2},
\end{align*}
where $\ve_r:= \sup_{i=1,\dots,n_r} m \left(             |g_i|^2
  \bone_{\{|g_i|\geq c_r               h\}}\right)$.  
By dominated convergence theorem $\ve_r\to 0$ when $c_r \to
\infty$. Therefore 
\[
m(|g|^2) \leq 2\beta_1(r) \left(n_r^2 c_r^2 m(|fh|)^2 + n_r^2  \ve_r m(|f|^2)\right)
\]
and 
\[
\left( 1- 2\beta_1(r) n_r^2 \ve_r\right) m(|f|^2) \leq r\cdot m(\, \overline
f\Delta_m f) 
+   2\beta_1(r) n_r^2 c_r^2  m(|fh|)^2. 
\]
Taking  $r'>r$  and $c_r$ large enough such that $\ve_r \leq
\frac{r'-r}{2\beta_1(r) n_r^2 r'}$ gives 
\[
m(|f|^2) \leq r' m(\, \overline f\Delta_m f) +  2\beta_1(r) n_r^2 c_r^2
\frac{r'}{r} m(|fh|)^2. 
\]
Taking $\beta_h(r')= \inf_{s_0<r<r'} 2\beta_1(r) n_r^2 c_r^2
\frac{r'}{r}$ ends the proof. 
\end{proof}

The conjunction of Theorem \ref{thm-SPI} and Theorem
\ref{thm-SPI-bas-sp} gives the following result. 
\begin{corollary}\label{cor-lyap-spec}
Assume there exists $W$ a positive function such that  
\[
\tilde \Delta_m W \geq  \psi \times W - b \bone_{B_{r_0}} 
\]
for some non-negative function $\psi$, some constant $b\geq 0$ and
some finite set $B_{r_0}$.  

If  $\liminf \psi(x) =l$, as $|x|\to \infty$, then
$\sigma_{\rm ess} (\Delta_m)\subset[l,\infty)$. 
In particular, if $\lim \psi(x) =+\infty$, then  $\sigma_{\rm ess}
(\Delta_m)=\emptyset$. 
\end{corollary}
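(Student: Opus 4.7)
The plan is to combine Theorem \ref{thm-SPI} and Theorem \ref{thm-SPI-bas-sp} in a direct way, essentially as a bookkeeping exercise translating the Lyapunov hypothesis into the language of the Super-Poincaré Inequality and then into a statement about the essential spectrum. Define $\Psi(r):=\inf\{\psi(x)\,:\,x\in B_r^c\}$; this is a non-decreasing function of $r$ with $\lim_{r\to\infty}\Psi(r)=\liminf_{|x|\to\infty}\psi(x)=l$.

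The case $l=0$ is trivial, since $\Delta_m$ is non-negative, so its essential spectrum is automatically contained in $[0,\infty)$. Assume therefore $l>0$ (possibly $+\infty$). Then $\Psi(r)>0$ for all $r$ large enough, so the hypotheses of Theorem \ref{thm-SPI} are met. To apply that theorem I need a strictly positive reference function $h:\Vr\to(0,\infty)$ with $m(|h|^2)=1$; since $\Vr$ is countable, one can enumerate $\Vr=\{x_n\}_{n\geq 0}$ and set $h(x_n):=c\,2^{-n/2}m(x_n)^{-1/2}$ with $c$ chosen to normalize, so such an $h$ always exists. With this $h$, Theorem \ref{thm-SPI} furnishes a non-increasing $\beta_h$ for which the Super-Poincaré Inequality SPI$(s_0)$ holds with $s_0=1/l$ (interpreting $1/{+\infty}=0$).

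Next, I invoke the implication (b)$\Rightarrow$(a) of Theorem \ref{thm-SPI-bas-sp}, which converts SPI$(s_0)$ directly into $\sigma_{\rm ess}(\Delta_m)\subset[1/s_0,\infty)=[l,\infty)$. For the second part of the corollary, if $l=+\infty$ then $s_0=0$, so SPI$(s)$ holds for every $s>0$; Theorem \ref{thm-SPI-bas-sp} then gives $\sigma_{\rm ess}(\Delta_m)\subset[1/s,\infty)$ for every $s>0$, whence $\sigma_{\rm ess}(\Delta_m)=\emptyset$.

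There is no genuine obstacle: all of the analytic content lies in the two theorems that have just been proved. The only small point to watch is the trivial degeneracy when $l=0$ (no information) and the need to produce the auxiliary strictly positive weight $h\in\ell^2(\Vr,m)$, both of which are handled in a single line. Thus the corollary follows at once.
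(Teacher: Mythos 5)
Your proof is correct and follows the paper's own argument exactly: apply Theorem \ref{thm-SPI} to convert the Lyapunov hypothesis into SPI$(1/l)$, then apply Theorem \ref{thm-SPI-bas-sp} to conclude $\sigma_{\rm ess}(\Delta_m)\subset[l,\infty)$. The only difference is that you spell out the edge cases ($l=0$, $l=+\infty$, existence of the normalizing weight $h$), which the paper's one-line proof leaves implicit.
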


\begin{proof}
Indeed, with our assumptions, Theorem \ref{thm-SPI} gives 
SPI $(1/l)$ and Theorem \ref{thm-SPI-bas-sp} implies in turn that   
$\sigma_{\rm ess} ( \Delta_m)  \subset [l,\infty)$.  
\end{proof}

\begin{remark}\label{r:A7}
One can avoid the use of Super-Poincar\'e Inequality in our setting
and give a direct proof of Corollary \ref{cor-lyap-spec} by using the
Hardy inequality \ref{Hardy} and either the Persson Lemma or 
the min-max principle.
\end{remark} 

%%%%
\noindent\textbf{Acknowledgement}.
MB was partially supported by the ANR project HAB
(ANR-12-BS01-0013-02). SG was partially supported by the ANR project
GeRaSic and SQFT.

\end{document}